\newtheorem{theorem}{Theorem}[section]
\newtheorem{corollary}[theorem]{Corollary}
\newtheorem{proposition}[theorem]{Proposition}
\newtheorem{lemma}[theorem]{Lemma}
\newtheorem{lem}[theorem]{}
\theoremstyle{definition}
\newtheorem{definition}[theorem]{Definition}
\theoremstyle{remark}
\newtheorem{remark}[theorem]{Remark}
\newtheorem{example}[theorem]{Example}
\newcommand{\blem}{\begin{lem} \rm}
\newcommand{\elem}{\end{lem}}
\newcommand\A{\mathcal{A}}
\newcommand\M{\mathcal{M}}
\renewcommand\M{\mathcal{M}}
\renewcommand\S{\mathcal{S}}
\newcommand{\K}{\mathcal{K}}
\renewcommand{\O}{\mathcal{O}}
\newcommand{\J}{\mathcal{J}}
\newcommand{\U}{\mathcal{U}}
\newcommand{\R}{\mathbb{R}}
\renewcommand{\H}{\mathbb{H}}
\newcommand{\C}{\mathbb{C}}
\newcommand{\Y}{\mathcal{Y}}
\newcommand{\Z}{\mathbb{Z}}
\newcommand{\Q}{\mathbb{Q}}
\newcommand{\ddt}{\frac{d}{dt}}
\renewcommand{\P}{\mathbb{P}}
\newcommand\lie[1]{\mathfrak{#1}}
\renewcommand{\k}{\lie{k}}
\renewcommand{\l}{\lie{l}}
\newcommand{\g}{\lie{g}}
\renewcommand{\r}{\lie{r}}
\newcommand{\z}{\lie{z}}
\renewcommand{\t}{\lie{t}}
\newcommand{\on}{\operatorname}
\newcommand{\vir}{\on{vir}}
\newcommand{\Def}{\on{Def}}
\newcommand{\Sym}{\on{S}}
\newcommand{\Ob}{\on{Ob}}
\newcommand{\fr}{{\on{fr}}}
\newcommand{\dual}{\vee}
\newcommand{\Aut}{ \on{Aut} } 
\newcommand{\aut}{ \on{aut} } 
\newcommand{\Ad}{ \on{Ad} }
\newcommand{\Hom}{ \on{Hom}}
\newcommand{\Ind}{ \on{Ind}}
\renewcommand{\ker}{ \on{ker}}
\newcommand{\Vol}{  \on{Vol}}
\newcommand{\codim}{\on{codim}}
\newcommand\dirac{/\kern-1.2ex\partial} 
\newcommand\qu{/\kern-.7ex/} 
\newcommand\lqu{\backslash \kern-.7ex \backslash} 
\newcommand\dr{r_+ \kern-.7ex - \kern-.7ex r_-}
\newcommand{\labell}\label
\renewcommand{\d}{{\on{d}}}
\newcommand{\ol}{\overline}
\newcommand{\olp}{\ol{\partial}}
\newcommand\phinv{\phi^{-1}}
\newcommand\eps{\epsilon}
\newcommand{\meps}{{\eps}}
\newcommand{\f}{\frac}
\newcommand{\lan}{\langle}
\newcommand{\ran}{\rangle}
\newcommand{\hh}{{\f{1}{2}}}
\newcommand{\ti}{\tilde}
\newcommand\pt{\on{pt}}
\newcommand\cF{\mathcal{F}}
\newcommand\cN{\mathcal{N}}
\newcommand\cH{\mathcal{H}}
\renewcommand{\ss}{{\on{ss}}}
\newcommand\mE{\mathcal{E}}
\newcommand\Gr{\on{Gr}}
\newcommand\Map{\on{Map}}
\newcommand\rank{\on{rank}}
\newcommand\ev{\on{ev}}
\newcommand\Eul{\on{Eul}}
\newcommand\Vect{\on{Vect}}
\newcommand\ul{\underline}
\newcommand\mO{\mathcal{O}}
\renewcommand\H{\mathcal{H}}
\newcommand\cZ{\mathcal{Z}}
\newcommand\cY{\mathcal{Y}}
\renewcommand\Im{\on{Im}}
\newcommand\Ker{\on{Ker}}
\newcommand\reg{{\on{reg}}}
\newcommand\mdeg{\mu}
\newcommand\bra[1]{ < \kern-.7ex {#1} \kern-.7ex >} 
\newcommand\bdefn{\begin{definition}}
\newcommand\edefn{\end{definition}}
\newcommand\bea{\begin{eqnarray*}}
\newcommand\eea{\end{eqnarray*}}
\newcommand\bcv{\left[ \begin{array}{r} }
\newcommand\ecv{\end{array} \right] }
\newcommand\bma{\left[ \begin{array}{l} }
\newcommand\ema{\end{array} \right]}
\newcommand\ben{\begin{enumerate}}
\newcommand\een{\end{enumerate}}
\newcommand\beq{\begin{equation}}
\newcommand\eeq{\end{equation}}
\newcommand\bex{\begin{example}}
\newcommand\bsj{\left\{ \begin{array}{rrr} }
\newcommand\esj{\end{array} \right\}}
\newcommand\Ch{\on{Ch}}
\newcommand\eex{\end{example}}
\newcommand\sx{*\kern-.5ex_X}
\def\mathunderaccent#1{\let\theaccent#1\mathpalette\putaccentunder}
\def\putaccentunder#1#2{\oalign{$#1#2$\crcr\hidewidth \vbox
to.2ex{\hbox{$#1\theaccent{}$}\vss}\hidewidth}}
\begin{document}

\title{Gauged Gromov-Witten theory for small spheres}

\author{Eduardo Gonzalez}

\address{
Department of Mathematics
University of Massachusetts Boston
100 William T. Morrissey Boulevard
Boston, MA 02125}
  \email{eduardo@math.umb.edu}

\author{Chris Woodward}

\address{Mathematics-Hill Center,
Rutgers University, 110 Frelinghuysen Road, Piscataway, NJ 08854-8019,
U.S.A.}  \email{ctw@math.rutgers.edu}

\thanks{Partially supported by NSF grants DMS1104670, DMS0904358
  and the Simons Center for Geometry and Physics.  To appear in Math. Zeit.  Final version available at www.springerlink.com}

\begin{abstract}  
We relate the genus zero gauged Gromov-Witten invariants of a smooth
projective variety for sufficiently small area with equivariant
Gromov-Witten invariants.  As an application we deduce a gauged
version of abelianization for Gromov-Witten invariants in the small
area chamber.  In the symplectic setting, we prove that any sequence
of genus zero symplectic vortices with vanishing area has a
subsequence that converges after gauge transformation to a holomorphic
map with zero average moment map.
\end{abstract}

\maketitle


\section{Introduction}

Gauged Gromov-Witten invariants are generalizations of usual
Gromov-Witten invariants to the case of symplectic manifolds with
group action.  These invariants are defined as integrals over moduli
spaces of gauged versions of holomorphic maps, known in the
symplectic approach as symplectic vortices, and depend on a choice of
area form on the domain curve.  In this paper we study the limit of
the gauged Gromov-Witten invariants in the case of genus zero curves
with area tending to zero.

Ideally one would like to define gauged Gromov-Witten invariants for
arbitrary Hamiltonian group actions.  Let $K$ be a compact Lie group,
$X$ a Hamiltonian $K$-manifold equipped with a moment map $\Phi: X \to
\k^\dual$, an invariant compatible almost complex structure $J$, and
let $\Sigma$ be a compact smooth complex curve.  A {\em gauged
  holomorphic map} from $\Sigma$ to $X$ is a pair $(A,u)$ consisting
of a connection $A$ on a principal $K$-bundle $P \to \Sigma$ together
with a section $u$ of the associated fiber bundle $P(X) := P \times_K
X$ that is holomorphic with respect to the complex structure
determined by $J$ and $A$.  The space of gauged holomorphic maps with
bundle $P$ has a formal Hamiltonian action of the group of gauge
transformations $\K(P)$.  The moment map depends on a choice of area
form $\Vol_\Sigma \in \Omega^2(\Sigma)$ and an invariant inner product
$( \ , \ ): \k \times \k \to \R$ on the Lie algebra $\k$ inducing an
identification $\k \to \k^\dual$.  The symplectic quotient of the
space of gauged holomorphic maps by the group of gauge transformations
is the moduli space of {\em symplectic vortices} \cite{ciel:vor},
\cite{ga:gw}, \cite{ci:symvortex}, \cite{mun:ham}
\begin{equation} \label{veq}
M^K(P,X) : = \{ (A,u) \, | \, F_A + u^* P(\Phi) \Vol_\Sigma = 0 \} / \K(P) ,
 \end{equation}
where $F_A \in \Omega^2(\Sigma,P(\k))$ is the curvature of $A$ and
$P(\Phi): P(X) \to P(\k^\dual) \cong P(\k)$ the map induced by $\Phi$.
Let $M^K(\Sigma,X)$ be the union of $M^K(P,X)$ over topological types
of bundles $P \to \Sigma$, and $M^K(\Sigma,X,d)$ the subspace of
homology class $d \in H_2^K(X,\Z)$.  In the case that $X$ is compact,
one obtains a natural compactification $\ol{M}^K(\Sigma,X,d)$ by
allowing bubbling of $u$ in the fibers of $P(X)$, so that $u$ is a
stable map to $P(X)$ as in Mundet \cite{mun:ham} and Ott
\cite{ott:remov}; we call such a nodal vortex $(A,u)$ {\em polystable}
and {\em stable} if it has finite automorphism group.  Gauged
Gromov-Witten invariants should be defined by virtual integration over
the moduli space $\ol{M}^K(\Sigma,X,d)$. 

The necessary virtual fundamental cycles are more easily defined for
algebraic targets.  Let $G$ be the complexification of $K$ and suppose
that $X$ is a smooth projectively-embedded $G$-variety.  If $P \to
\Sigma$ is a principal $K$-bundle then any connection $A$ on $P$
defines a holomorphic structure on the $G$-bundle $P(G) = P \times_K
G$ associated to $P$.  If $u$ is a holomorphic section of the
associated $X$ bundle $P(X) = P(G) \times_G X$ then the pair
$(P(G),u)$ is by definition a morphism from $\Sigma$ to the {\em
  quotient stack} $X/G$.  By Mundet's correspondence,
\cite{mund:corr}, an irreducible pair $(A,u)$ is
complex-gauge-equivalent to a vortex iff the corresponding holomorphic
pair $(P(G),u)$ satisfies a semistability condition generalizing
Mumford-Seshadri semistability for vector bundles on curves.  As
before, allowing bubbling in the fibers gives rise to a {\em moduli
  stack} $\ol{\M}_n^G(\Sigma,X)$, compact once the homology class is
fixed.  If semistable gauged maps have finite stabilizer then
$\ol{\M}_n^G(\Sigma,X)$ is a Deligne-Mumford stack whose underlying
coarse moduli space is homeomorphic to $\ol{M}_n^K(\Sigma,X)$.  The
machinery of Behrend-Fantechi \cite{bf:in} yields virtual fundamental
classes and the desired invariants.

The choice of area form gives rise a one-parameter family of Mundet
semistability conditions.  In order to study the dependence of the gauged
Gromov-Witten invariants on this choice we introduce a real parameter
$\rho$ and say that a {\em $\rho$-vortex} is a pair $(A,u)$ satisfying
the vortex equations with area form $\rho \Vol_\Sigma$, or
$\rho$-semistable if it is Mundet semistable with this choice.  Let
$\ol{M}_n(\P^1) := \ol{M}_{0,n}(\P^1,[\P^1])$ be the Fulton-MacPherson
moduli space of stable maps to $\P^1$ of homology class $[\P^1]$.  The
{\em gauged Gromov-Witten invariants}
$$ \lan \ , \ \ran_{d,\rho} : H_G(X,\Q)^n \otimes
H(\ol{M}_n(\P^1)) \to \Q $$
for $n \in \Z_{\ge 0}, d \in H_2(X,\Z)$ are defined by virtual
integration.  The appearance of the Fulton-MacPherson, rather than the
usual Grothendieck-Knudsen moduli space, is a manifestation of the
dependence of the moduli space on the choice of area form.  As the
stability parameter $\rho$ varies the gauged Gromov-Witten invariants
are related by a wall-crossing formula \cite{cross}.  There are two
interesting limits in which the area of the domain tends to infinity
or zero.  The former has been studied by Gaio-Salamon \cite{ga:gw}.
In certain cases, they show that the limit of any sequence of
solutions is a holomorphic map to the symplectic quotient $X \qu K$
and so obtain a relationship between the equivariant cohomology of $X$
and the quantum cohomology of $X \qu K$.

The purpose of this paper is to study the opposite limit in which the
area of the domain tends to zero.  The space of holomorphic maps
$\Hom(\P^1,X)$ from $\P^1$ to $X$ has a natural $K$-action given by composition
with the action, and a formal symplectic structure given by
integrating the pull-back of the symplectic form on $X$.  A formal
moment map is
$$ \phi: \Hom(\P^1,X) \to \k^\dual, \ \ \ u \mapsto \int_{\P^1} u^*
\Phi \Vol_{\P^1}.$$
The map $\phi$ extends to the moduli space of parametrized stable maps
$\ol{M}_{0,n}(\P^1 \times X, (1,d))$, that is the moduli of
$n$-marked stable maps of genus $0$ with degree $(1,d)$ into
$\P^1\times X$ where $1 = [\P^1]$ is the unit in $H_2(\P^1)$.  We say
that a {\em polystable zero-area vortex} of homology class $d \in
H_2(X,\Z)$ is a stable map ${u}: {C} \to \P^1 \times X$ from a genus
zero nodal curve $C$ of class $(1,d)$ with $\phi({u})= 0$.  The first
main result says that these are exactly the stable maps that appear in
the small area limit:
\begin{theorem}  \label{limthm} 
Suppose that $X$ is a compact Hamiltonian $K$-manifold, $\rho_\nu \to
0 $ is a sequence of positive real numbers, and $(A_\nu,{u}_\nu)$ is a
sequence of polystable $\rho_\nu$-vortices on $\P^1$ with target $X$
of fixed homology class and bundle $P$.  Then $P$ is trivializable and
there exists a sequence $k_\nu \in \K(P)$ of gauge transformations
such that after passing to a subsequence, $k_\nu A_\nu$ converges in
$C^0$ to the trivial connection on $P$ and $k_\nu {u}_\nu$ Gromov
converges to a polystable zero-area vortex ${u}_0$.  Conversely, any
regular stable zero-area vortex is a limit of a sequence
$(A_\nu,{u}_\nu)$ of $\rho_\nu$-vortices with $\rho_\nu \to 0$.
\end{theorem}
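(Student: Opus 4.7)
The plan is to handle the two directions separately: the forward (compactness) direction by Uhlenbeck-type gauge fixing followed by Gromov compactness for the sections, and the converse (existence) direction by an implicit function theorem at the trivial connection.

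For the forward direction, the vortex equation $F_{A_\alpha} + \rho_\alpha u_\alpha^* P(\Phi)\Vol_\Sigma = 0$, combined with compactness of $X$ and hence boundedness of $\Phi$, produces a uniform pointwise bound $|F_{A_\alpha}| \le \rho_\alpha \sup_X |\Phi|$, so curvatures vanish uniformly. The integrated version of this identity forces the Chern--Weil degree of $P$ to tend to zero, and since this degree is a discrete invariant on a fixed topological type, $P$ is in fact topologically trivial. I would then apply Uhlenbeck's weak compactness on the simply connected base $\P^1$ to produce gauge transformations $k_\alpha \in \K(P)$ with $k_\alpha A_\alpha$ converging smoothly to the trivial connection. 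The sections $k_\alpha u_\alpha$ then satisfy $\olp_{k_\alpha A_\alpha}(k_\alpha u_\alpha) = 0$ with the operator converging to the standard $\olp$ on maps $\P^1 \to X$, and have uniformly bounded energy, so by Gromov compactness they admit a subsequential limit $u_0 \colon C \to \P^1 \times X$ representing a stable map of class $(1,d)$. Passing to the limit in the integrated vortex equation, paired against constant elements of $\k$ so that $\int_\Sigma F_{A_\alpha}$ vanishes by triviality, yields $\int_\Sigma u_\alpha^* \Phi \Vol_\Sigma \to 0$ and therefore $\phi(u_0) = 0$. Polystability of $u_0$ then transfers from polystability of the $(A_\alpha, u_\alpha)$ since an infinitesimal stabilizer of the limit would persist as an asymptotic stabilizer of the approximants.

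For the converse, given a regular polystable zero-area vortex $u_0$, I would construct the sequence $(A_\alpha, u_\alpha)$ by Newton iteration from the approximate solution $(A_{\on{triv}}, u_0)$. The linearization of the full vortex system modulo infinitesimal gauge transformations decouples at $\rho = 0$ into a Hodge-theoretic operator on $\delta A$ and a linearized $\olp$-operator on $\delta u$; the regularity hypothesis on $u_0$ is precisely the surjectivity of the coupled operator, which by continuity persists for $\rho$ small. A standard contraction mapping argument then produces a $\rho$-vortex close to $(A_{\on{triv}}, u_0)$. If $u_0$ has nontrivial bubble components, a gluing construction with cutoff functions near the bubble points is needed to produce approximate solutions before running the iteration.

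The principal obstacles are, in the forward direction, matching Mundet's $\rho$-stability of the approximants with zero-area polystability of the limit and ruling out degenerations in which mass concentrates at bubble points in a way incompatible with $\phi(u_0) = 0$; and in the converse direction, carrying out the gluing across bubble components, since on $\P^1$ the bubbles of $u_0$ can accumulate and interact with the gauge degrees of freedom. Both are technically standard in the symplectic vortex literature but require care in the present non-aspherical setting because one must track both the connection and the section simultaneously.
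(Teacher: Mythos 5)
The forward direction has a genuine gap at exactly the step the paper singles out as the crux: proving that the limit map has zero average moment map. You assert that $\int_\Sigma F_{A_\alpha}$ ``vanishes by triviality'' after pairing with constant elements of $\k$, but this is false for nonabelian $K$: in the trivialization induced by the limiting flat connection one has $F_{A_\alpha} = \d a_\alpha + \tfrac12[a_\alpha\wedge a_\alpha]$, and only the exact term is killed by Stokes; the commutator term survives and is generically nonzero even on a topologically trivial bundle. Worse, even if you knew $\int_\Sigma F_{A_\alpha}\to 0$ (which does follow from the pointwise bound $|F_{A_\alpha}|\le \rho_\alpha\sup_X|\Phi|$), that would not suffice: the integrated vortex equation gives $\phi(u_\alpha) = -\rho_\alpha^{-1}\int_\Sigma F_{A_\alpha}$, so you need $\int_\Sigma F_{A_\alpha} = o(\rho_\alpha)$, not merely $o(1)$. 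The paper supplies precisely this sub-leading estimate: after putting $a_\alpha$ in Coulomb gauge, the genus-zero elliptic estimate $\Vert a_\alpha\Vert_{1,p}\le c\Vert F_{A_\alpha}\Vert_{0,p}\le c'\rho_\alpha$ of Lemma \ref{genzero} (which fails in higher genus, so this is where $\Sigma=\P^1$ is really used beyond triviality of flat connections) shows the commutator term is $O(\rho_\alpha^2)$, whence $\phi(u_\alpha)=O(\rho_\alpha)\to 0$. This second-order analysis is what the introduction calls the ``study of a sub-leading term,'' and it is absent from your argument.

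The converse direction follows the paper's route in outline (approximate solution at the trivial connection, Newton iteration, gluing across bubbles), but your appeal to a ``standard contraction mapping argument'' conceals the one genuinely nonstandard feature: the quadratic remainder of the vortex map contains a term of size $\rho^{-1}\Vert a\Vert^2$ coming from the harmonic part of the curvature, so it does not satisfy a uniform quadratic estimate as $\rho\to 0$. The paper handles this by rescaling the norms on the harmonic pieces by $\rho$ and verifying that the initial approximate solution is accurate to order $\rho$, so that the blow-up of the quadratic term does not destroy convergence of the iteration; without some such device the contraction argument does not close. The regularity-implies-surjectivity-of-the-$\rho$-linearization step you describe is correct and matches the paper.
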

\noindent The fact that the limit is a holomorphic map should not be
surprising: given a sequence of vortices with vanishing area, the
curvature of the connection goes to zero.  Therefore, if the sequence
of connections converges then it must converge to a flat connection,
which in genus zero is trivializable.  From the symplectic point of
view, the subtle part of the theorem is the statement that the
limiting map has zero average moment map: this follows from a study of
a sub-leading term in the vortex equations.  A similar theorem for the
case that $X$ is aspherical and $K$ is abelian is proved in the thesis
of Jan Wehrheim \cite{jw:vi}.

The moduli space of $n$-marked polystable zero-area vortices is
denoted $\ol{M}_n^K(\P^1,X)_0$.  If $X$ is a smooth
projectively-embedded $G$-variety then $\ol{M}_n^K(\P^1,X)_0$ is the
coarse moduli space of a {\em moduli stack} $\ol{\M}_n^G(\P^1,X)_0$.
Integration gives rise to {\em zero-area gauged Gromov-Witten
  invariants} $ \lan \ , \ \ran_{d,0}$.  The zero-area gauged
Gromov-Witten invariants may be viewed as the {\em invariant part} of
Givental's equivariant Gromov-Witten invariants \cite{gi:eq}, and
standard techniques \cite{je:lo1}, \cite{wo:norm} allow, in principle,
their computation from Givental's invariants, as in Theorem
\ref{nonab} below.  The second main result is a description of the
gauged Gromov-Witten invariants in the small-area limit:

\begin{theorem}  \label{equal}  Suppose that $X$ is a smooth projectively-embedded variety
and $d \in H_2^G(X,\Z)$, and every $0$-semistable gauged map has
finite automorphism group.  There exists a $ \rho_0 > 0$ such that for
all $\rho < \rho_0$, there is an equivalence $\ol{\M}_n^G(\P^1,X,d)_0
\to \ol{\M}_n^G(\P^1,X,d)_\rho$ of Deligne-Mumford stacks with
relative perfect obstruction theories and an equality of invariants
$ \lan \alpha ; \beta \ran_{d,\rho} = \lan \alpha; \beta \ran_{d,0}$
for all $\alpha \in H_G(X,\Q)^n$ and $\beta \in
H(\ol{M}_n(\P^1),\Q)$.
\end{theorem}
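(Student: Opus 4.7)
The plan is to use variation-of-GIT / variation-of-stability-condition techniques applied to Mundet's stability criterion, which by the cited theorem of Mundet translates the vortex equation into an algebro-geometric condition on the pair $(P,u)$. The key point is that Mundet's condition depends on $\rho$ in a controlled (essentially linear) fashion through the weight of the parabolic reduction, so the $\rho$-semistable locus is cut out by finitely many inequalities in $\rho$ for fixed topological type.

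First, I would recall the precise form of Mundet's inequality: for any one-parameter subgroup or parabolic reduction $\sigma$ of $(P,u)$, semistability requires $\mu^{\mathrm{Mun}}_\rho(\sigma;P,u) := \mu^{\mathrm{HN}}(\sigma) + \rho\, \mu^{\mathrm{HM}}(\sigma;u) \ge 0$, where the first term comes from Harder-Narasimhan slope and the second from the Hilbert-Mumford weight of $u$ with respect to the linearization on $X$. Taking $\rho \to 0$, the semistability condition degenerates to $\mu^{\mathrm{HN}}(\sigma) \ge 0$ on all $\sigma$ and a polystability constraint that matches $\phi(u)=0$ for the fiber section, i.e., the zero-average moment map condition defining $\ol{\M}_n(\P^1,X,d)_0$. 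I would verify this identification by chasing through the genus-zero picture, where $\mu^{\mathrm{HN}}$ forces the bundle to be trivial and the residual condition on $u$ becomes $\int u^*\Phi\,\Vol_\Sigma = 0$.

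Next, the heart of the argument is a finiteness/wall-structure statement. Fix $d \in H_2^G(X,\Z)$; by boundedness of the moduli stack of gauged maps of class $d$ on $\P^1$, only finitely many topological types of reductions $\sigma$ can be relevant to the (semi)stability of any $(P,u)$ of class $d$. For each such $\sigma$, both $\mu^{\mathrm{HN}}(\sigma)$ and $\mu^{\mathrm{HM}}(\sigma;u)$ are integers (after normalization), and the locus in $\rho$ where $\mu^{\mathrm{Mun}}_\rho(\sigma) = 0$ is a single point if $\mu^{\mathrm{HM}}(\sigma;u) \ne 0$. Hence the set of walls in $(0,\infty)$ is discrete, and there exists $\rho_0 > 0$ so that no wall lies in $(0,\rho_0)$. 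For such $\rho$, the semistable locus is independent of $\rho$ and agrees set-theoretically with the $0$-semistable locus; since the hypothesis ensures finite automorphism groups, one obtains the equivalence of Deligne-Mumford stacks $\ol{\M}_n(\P^1,X,d)_\rho \to \ol{\M}_n(\P^1,X,d)_0$.

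Finally, I would note that the perfect obstruction theory is intrinsic to the moduli problem of maps to the quotient stack $X/G$: the tangent-obstruction complex is computed by $R\pi_*(u^*T_{X/G})$ relative to the universal curve, and this is independent of the stability parameter $\rho$. Hence the identification of stacks is automatically compatible with their Behrend-Fantechi obstruction theories, and the virtual fundamental classes are identified. Integrating the pulled-back classes $\alpha$ and $\beta$ against the virtual class gives $\lan\alpha;\beta\ran_{d,\rho} = \lan\alpha;\beta\ran_{d,0}$. The main obstacle I anticipate is the wall-finiteness step, specifically bounding uniformly in $\rho \in (0,\rho_0)$ the destabilizing reductions — this requires genus-zero boundedness for gauged maps that is presumably established earlier in the paper or borrowed from Mundet; once this boundedness is in hand, the rest is a straightforward variation-of-stability argument.
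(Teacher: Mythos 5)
Your proposal is correct and rests on the same two pillars as the paper's argument: (i) for $\rho$ small, Mundet semistability of $(P,u)$ reduces to semistability of the bundle $P$ --- hence triviality, in genus zero --- together with a residual Hilbert--Mumford condition on the section, which is exactly $0$-semistability; and (ii) the relative obstruction theory $Rp_*e^*TX$ does not see $\rho$ once the stacks are identified (the paper's only extra care here is that the $\rho=0$ theory is built on $\ol{\M}_{0,n}(\P^1\times X)/G$, which sits over $BG$, i.e.\ over the locus of trivial bundles, and is then pushed forward). The one place your route genuinely differs is the quantitative step producing $\rho_0$. You propose wall-finiteness: boundedness of gauged maps of class $d$ gives finitely many relevant reductions, hence finitely many walls, hence a wall-free interval $(0,\rho_0)$. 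The paper instead exploits that the first term $\int_\Sigma c_1(p_*\sigma^*P\times_L\C_\lambda)$ of $\mdeg_{\sigma,\lambda}(P,u)$ is an integer-valued linear function of the weight $\lambda$, hence bounded below by $c\Vert\lambda\Vert$ whenever nonzero, while the second term is bounded by $\rho\,\sup_X\Vert\Phi\Vert\cdot\Vert\lambda\Vert$ times the area; choosing $\rho$ with $\rho\sup_X\Vert\Phi\Vert<c$ makes the first term dominate whenever it is nonzero. This yields $\rho_0$ uniformly over all pairs $(\sigma,\lambda)$ and entirely sidesteps the boundedness-of-destabilizing-reductions input that you flag as your main anticipated obstacle --- that input is simply not needed.

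One point you should tighten: the residual condition on $u$ in the $\rho\to 0$ limit is the inequality $\lan \Gr_\lambda(u)_0^*\Phi,\lambda\ran\le 0$ for all $\lambda$ (Definition \ref{0ssdef}), \emph{not} the zero-average condition $\int_\Sigma u^*\Phi\,\Vol_\Sigma=0$. The latter singles out a distinguished $K$-orbit inside each $0$-semistable $G$-orbit (the zero-area vortex), and the identification of the quotient stack of the $0$-semistable locus with the moduli space of zero-area vortices is a separate Kempf--Ness-type statement (Theorem \ref{typestrat} and the Proposition following it), not a formal consequence of letting $\rho\to 0$ in Mundet's inequality. As written, your second paragraph conflates the semistability condition defining the stack with the moment-map-zero condition defining its coarse symplectic model; the proof of the theorem at hand only needs the former.
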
 

Using this result we obtain a version of the abelianization conjecture
of Bertram, Ciocan-Fontanine, and Kim \cite{be:qu} in the ``small area
chamber'' relating the Gromov-Witten invariants for $G$ and a maximal
torus $T$, in the case that $G$ is connected.  Let $W$ be the Weyl
group of $T \subset G$.

\begin{theorem}  \label{qmartinthm}
   Let $X$ be a smooth projectively-embedded $G$-variety such that
   every $0$-stable gauged map has finite automorphism group and $d_G
   \in H_2^G(X,\Z)$.  There exists a constant $\rho_0 > 0$ such that
   if $\rho < \rho_0$ and $\alpha \in H_G(X,\Q)^n$ is an equivariant
   Hodge class (that is, a Chern character of an equivariant algebraic
   vector bundle) and $\beta \in H(\ol{M}_n(\P^1),\Q)$ then
\begin{equation} \label{qmartininfty}
 \lan \alpha, \beta \ran_{G,d_G,\rho} = (\# W)^{-1} \sum_{d_T \mapsto
   d_G} \lan \alpha , \beta \ran_{T,d_T,\rho}^{twist} \end{equation}
where the right hand side is a sum of gauged Gromov-Witten invariants
twisted by the bundle with fiber $\g/\t$, where $\g$ is the Lie
algebra of $G$ and $\t \subset \g$ a Cartan subalgebra.  All of these
gauged Gromov-Witten invariants vanish unless $d_G$ is in the image of
$H_2(X)$ in $H_2^G(X)$.
\end{theorem} 

Combining this with the wall-crossing formula of \cite{cross} gives an
abelianization formula for all chambers, in particular, the large area
chamber which is related to the Gromov-Witten theory for the
symplectic quotient.  

A word on notation: we generally use the notation 
${M}$ for a
coarse moduli space (space of isomorphism classes), ${\M}$ for a
Deligne-Mumford stack, and ${\frak{M}}$ for an Artin stack, so the
``ornateness'' of the notation corresponds to ``stackiness''.

\section{Symplectic vortices and gauged Gromov-Witten invariants}

In this section we recall the moduli spaces of symplectic vortices
associated to Hamiltonian $K$-manifolds, introduced by Mundet
\cite{mun:ham} and Salamon and collaborators \cite{ci:symvortex},
\cite{ciel:vor}, and the associated gauged Gromov-Witten invariants
introduced (in the case of algebraic target) in \cite{cross}.

\subsection{Symplectic vortices}

Let $\Sigma$ be a compact smooth complex curve with complex structure
$J_\Sigma: T\Sigma \to T\Sigma$, and $\pi: P \to \Sigma$ a smooth
principal $K$-bundle.  Let $X$ be a compact Hamiltonian
$K$-manifold with symplectic form $\omega$ and moment map $\Phi:X \to
\k^\dual$.  The action of $K$ on $X$ gives rise to a homomorphism of Lie
algebras 
$$\k \to \Vect(X), \ \ \  \xi \mapsto \xi_X, 
\quad  \xi_X(x) = \ddt |_{t=0} \exp(-t \xi) x .$$
Our sign convention for the moment map is 
$\iota(\xi_X) \omega = - \d \lan \Phi,\xi \ran, \forall \xi \in \k. $
By equivariant formality of Hamiltonian $K$-manifolds (see
e.g. \cite{gu:eqdr}) the second equivariant homology splits $
H_2^K(X,\Q) \cong H_2(X,\Q) \oplus H_2^K(\pt,\Q) .$ If the action of
$K$ extends to the complexification $G$ then we identify $H^K(X)$ and
$H^G(X)$; generally speaking we use $H^G(X)$ when $X$ is a
$G$-manifold, or $H^K(X)$ when we are using only the $K$-action.
Continuous sections $u$ of $P(X) := (P \times X)/K$ correspond to
lifts $u_K: \Sigma \to X_K = EK \times_K X$ of a classifying map
$\Sigma \to BK$ for $P$ to $X_K$.  Denote by $\pi_{P(X)}$ the
projection of $P(X)$ onto $\Sigma$.  The {\em equivariant homology
  class} $\mdeg(u) \in H_2^K(X,\Z)$ of $u: \Sigma \to P(X)$ is defined
by $\mdeg(u) = u_{K,*}[\Sigma]$.  Let $\J(X)$ denote the space of
compatible almost complex structures on $X$. The action of $K$ induces
an action on $\J(X)$ by conjugation, and we denote by $\J(X)^K$ the
invariant subspace.  Let $\A(P)$ denote the space of smooth
connections on $P$, and by $P(\k) := (P \times \k)/K$ the adjoint
bundle.  The space $\A(P)$ is an affine space modelled on
$\Omega^1(\Sigma,P(\k))$, with action given by $A \mapsto A + \pi^* a$
where $\pi^*: \Omega^1(\Sigma,P(\k)) \to \Omega^1(P,\k)^K$ is the
pull-back map.  For any $A \in \A(P)$, we denote $F_A \in
\Omega^2(\Sigma,P(\k))$ the curvature of $A$.  Any connection $A \in
\A(P)$ induces a map of spaces of almost complex structures
$$ \J(X)^K \to \J(P(X)), \ \ J \mapsto J_A$$
 using the splitting defined by the connection.  Let
 $\Gamma(\Sigma,P(X))$ denote the space of smooth sections of $P(X)$.
 Let
$$ \olp_A : \Gamma(\Sigma,P(X)) \to \Omega^{0,1}(\Sigma,( \cdot
)^*T^{\on{vert}} P(X)), \quad  \olp_A u = \hh ( \d u + J_A(u) \circ \d u \circ J_\Sigma) $$
be the Cauchy-Riemann operator defined by $J_A$.  A {\em gauged map to
  $X$} is a pair $(A,u) \in \A(P) \times \Gamma(\Sigma,P(X))$.
Suppose $\Vol_\Sigma$ is the area form determined by a choice of
metric on $\Sigma$.  The {\em energy} of a gauged section $(A,u)$ is
given by
$$ E(A,u) = \hh \int_\Sigma \left(| \d_A u |^2 + |F_A|^2 + |u^*
 P(\Phi)|^2 \right) \Vol_\Sigma .$$
The \emph{equivariant area} $D(u)$ of $u$ is pairing of the class
$\mdeg(u)$ with the class $[\omega_K] \in H^2_K(X)$ of the equivariant
symplectic form.  The energy and equivariant area are related by
\cite[3.1]{ci:symvortex}
\begin{equation}  \label{energyaction} 
E(A,u) = D(u) + \int_\Sigma \left( | \olp_A u |^2 + \hh | F_A + u^*
P(\Phi) \Vol_\Sigma |^2 \right) \Vol_\Sigma .
\end{equation}
The space of {\em gauged holomorphic maps} with underlying bundle $P$
is
$$ \H(P,X) = \{ (A,u) \in \A(P) \times \Gamma(\Sigma,P(X)), \ \ \olp_A
u 
= 0 \} .$$
The space $\H(P,X)$ has a formal symplectic form induced from the sum
of the formal symplectic forms on the factors, given as follows.  Let
$$ \Omega^1(P(\k))^2 \to \R, \ \ \ (a_1,a_2) \mapsto \int_{\Sigma} (a_1
\wedge a_2 ) $$
denote the symplectic form on the affine space of connections $\A(P)$
determined by the metric on $\k$.  On the other hand, let $P(\omega)$
denote the fiber-wise two-form on $P(X)$ defined by $\omega$.  Choose
a two-form $\Vol_{\Sigma} \in \Omega^2(\Sigma)$ and define
$$ \Omega^0(\Sigma, u^* T^{\on{vert}}P(X))^2 \to \R, \ \ \ (v_1,v_2)
\mapsto \int_{\Sigma} (u^* P(\omega))(v_1,v_2) \Vol_{\Sigma} .$$
Choose a constant $\rho > 0$, called the {\em vortex parameter},
and consider the formal two-form
\begin{equation} \label{sympform}
 ((a_1,v_1),(a_2,v_2)) \to \int_{\Sigma} ( a_1 \wedge
a_2 ) + \rho \Vol_{\Sigma} (u^* P(\omega))(v_1,v_2) \end{equation} 
Let $\K(P)$ denote the group of gauge transformations.  The Lie
algebra $\k(P)$ of $\K(P)$ is the space of sections
$\Omega^0(\Sigma,P(\k))$ of the adjoint bundle.  The action of $\K(P)$
on the space of pairs generates an infinitesimal action
$ (A,u) \mapsto (  \d_A \xi, u^* \xi_X) $
where $\xi_X \in \Omega^0(\Sigma,P \times_K \Vect(X))$ is the vertical
vector field induced by $\xi$.  The action preserves the formal
two-form \eqref{sympform} and has moment map given by the curvature
plus pull-back of the moment map for $X$,
$$ \A(P) \times \Gamma(\Sigma,P(X)) \to \Omega^2(\Sigma,P(\k)),
\ \ \ (A,u) \mapsto F_{A,u} := F_A + \rho \Vol_{\Sigma} u^* P(\Phi)
.$$
By restriction one obtains a formal Hamiltonian action on $\H(P,X)$.
These formal considerations motivate the following definition.
\begin{definition} \label{vdef} A
 gauged holomorphic map $(A,u)$ is a {\em $\rho$-vortex}
 iff $ F_{A,u} = 0 .$ \end{definition}
\noindent 
The {\em moduli space of
  $\rho$-vortices} of class $d \in H_2^K(X)$ with bundle $P$ is 
$$ M^K(P,X,d)_\rho 
:=  \{ F_{A,u} = 0, \mdeg(u) =d \}/\K(P) .$$
Let $M^K(P,X)_\rho$ be the union over classes $d$, and
$M^K(\Sigma,X)_\rho$ the union over types $P$.  Note that the first
Chern class of $P$ is determined by the homology class of $u_K$ via
the projection $X_K \to BK$.  The formal tangent space to
$M^K(P,X)_\rho$ is the kernel of a Fredholm operator given as follows.
We first give the spaces of connections and sections the structure of
Banach manifolds by taking completions with respect to Sobolev norms
$\| \cdot \|_{k,p}$
for positive integers $k,p$. For $p>2$, define
\begin{multline*} \d_{A,u,\rho}: \Omega^1(\Sigma,P(\k))_{1,p} \oplus 
\Omega^0(\Sigma,u^* T^{\on{vert}} P(X) )_{1,p} \to
\Omega^2(\Sigma,P(\k))_{0,p} \\ (a,v) \mapsto \d_A a + \rho
\Vol_{\Sigma} L_v P(\Phi)
\end{multline*}
\begin{multline*} \d^*_{A,u,\rho}: \Omega^1(\Sigma,P(\k))_{1,p} \oplus
\Omega^0(\Sigma,u^* T^{\on{vert}}P(X) )_{1,p} \to
\Omega^0(\Sigma,P(\k))_{0,p} \\
(a,v) \mapsto \d_A^* a + \rho \Vol_{\Sigma} u^* L_{J v} P(\Phi)
.\end{multline*}
Here $L_v P(\Phi) \in \Omega^0(\Sigma,P(\k))$ denotes the derivative
of $P(\Phi)$ in the direction of $v$.  The set
\begin{equation} \label{slice} S_{A,u} = \{ (A + \pi^* a,
  \exp_u(v)), (a,v) \in \ker \d_{A,u,\rho}^*\} \subset
  \H(P,X)_{1,p}
\end{equation}
contains a slice for the action of $\K(P)_{2,p}$ on $\H(P,X)_{1,p}$ at
$(A,u)$ (see \cite[Section 4]{ci:symvortex}).  Define
\begin{multline} \label{cutout}
 \cF_{A,u}^\rho : \Omega^1(\Sigma,P(\k))_{1,p} \oplus
\Omega^0(\Sigma,u^* T^{\on{vert}} P(X))_{1,p} \\ \to (\Omega^0 \oplus
\Omega^2)(\Sigma,P(\k))_{0,p} \oplus \Omega^{0,1}(\Sigma,u^*
T^{\on{vert}} P(X))_{0,p} \\
 (a,v) \mapsto \left( F_{A + \pi^*a} + \rho \Vol_\Sigma
  \exp_u(v)^* \Phi, \d_{A,\rho}^* (a,v), \Psi_u(v)^{-1}
  \ol{\partial}_{A + \pi^*a} \exp_u(v) \right) \end{multline}
where $\Psi_u(v)^{-1}$ is parallel transport from $\exp_u(v)$ to
$u$ using the Hermitian modification of the Levi-Civita connection
\cite[Section 3.1]{ms:jh}.  Let $D_{A,u}$ denote the linearization of
the Cauchy-Riemann operator for $J_A$,
$$ D_{A,u} (a,v) := (\nabla_A v)^{0,1} + a_X^{0,1} + \hh J_u
(\nabla_{A,v} J )_u \partial_A u $$
where $a_X$ denotes the image of $a$ under the map
$\Omega^1(\Sigma,P(\k)) \to \Omega^1(\Sigma,u^* T^{\on{vert}} P(X))$
induced by the action, and $0,1$ denotes projection on the
$0,1$-component.  The last term vanishes if $J$ is integrable.

\begin{definition} The {\em linearized operator} for $(A,u) \in
\H(P,X)$ and vortex parameter $\rho$ is the operator
\begin{multline}  \label{linearized}
 \ti{D}^\rho_{A,u} :\Omega^1(\Sigma,P(\k)) \oplus \Omega^{0}(\Sigma, u^*
 T^{\on{vert}}(P(X)) \\ \to (\Omega^0 \oplus \Omega^2)(\Sigma, P(\k))
 \oplus \Omega^{0,1}(\Sigma, u^* T^{\on{vert}}(P(X)) \\ (a,v)
 \mapsto (\d_{A,u,\rho}( a,v), \d_{A,u,\rho}^* (a,v) ,
 D_{A,u}(a,v) ).
\end{multline}
$(A,u)$ is {\em regular} if the operator $\ti{D}^\rho_{A,u}$ is
surjective.  If $(A,u)$ is regular then it follows from the slice
condition that $\aut(A,u)$ is trivial, and since $\Aut(A,u) \subset
\Aut(A)$ is compact, $\Aut(A,u)$ is finite, that is, $(A,u)$ is
stable.  The space of {\em infinitesimal deformations} of $(A,u)$ is
$\Def(A,u) = \ker(\ti{D}^\rho_{A,u})$.  \end{definition}
\noindent The operator $\ti{D}^\rho_{A,u}$ is the linearization of the
map $ \cF_{A,u}^\rho$ at $(A,u)$.  It is elliptic, and so has finite
dimensional kernel and cokernel in the Sobolev completions above.  The
following theorem, due to Mundet, Salamon et al. \cite{mun:ham},
\cite{ci:symvortex}, generalizes the standard results for
pseudoholomorphic maps to the gauged setting:

\begin{theorem}  \label{smoothreg} 
For any constants $c_1,c_2 > 0$, the set of elements $[A,u] \in
M^K(\Sigma,X)_\rho$ with
$\sup | \d_A u | < c_1$ and $E(A,u) < c_2$
is compact.  The regular locus $M^{K,\reg}(\Sigma,X)_\rho$ is
a smooth orbifold with tangent space 
at $[A,u]$ 
isomorphic to
$\Def(A,u)$. The dimension of the component of homology class $d \in
H_2^K(X)$ is given by
$$ \dim(M^{K,\reg}(\Sigma,X,d)_\rho) = \Ind(\ti{D}^\rho_{A,u}) =
(1-g)(\dim(X) - 2\dim(K)) + 2(c_1^K(TX),d) $$
where $g = \on{genus}(\Sigma)$. 
\end{theorem}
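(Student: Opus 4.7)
The plan is to address the three assertions — compactness, smoothness of the regular locus, and the index formula — in turn, by reducing each to the corresponding statement for pseudoholomorphic curves combined with Uhlenbeck's gauge-theoretic technology. For compactness, let $(A_\alpha, u_\alpha)$ be a sequence of $\rho$-vortices satisfying the hypotheses. The moment-map equation $F_{A_\alpha} = -\rho \Vol_\Sigma u_\alpha^* P(\Phi)$ gives a uniform $L^\infty$ bound on $F_{A_\alpha}$ since $X$, and hence $\Phi$, is bounded. Uhlenbeck's weak compactness theorem then produces gauge transformations $k_\alpha \in \K(P)$ such that $k_\alpha \cdot A_\alpha$ converges weakly in $W^{1,p}$ along a subsequence. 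With the connections controlled and with $\sup |\d_A u_\alpha| < c_1$ preventing bubbling of the section, the equation $\olp_{A_\alpha} u_\alpha = 0$ together with elliptic regularity in a local trivialization gives $W^{2,p}$ estimates on $u_\alpha$; a standard bootstrap then upgrades this to $C^\infty$-convergence up to gauge, as in \cite{mun:ham} and \cite{ci:symvortex}.

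For the smooth-orbifold assertion I would invoke the slice theorem. The slice $S_{A,u}$ of \eqref{slice} is transverse to the $\K(P)$-orbit through $(A,u)$ because $\d^*_{A,u,\rho}$ is, up to a symmetric moment-map perturbation, the formal adjoint of the infinitesimal gauge action. Restricted to this slice the vortex equations are cut out by the map $\cF^\rho_{A,u}$ of \eqref{cutout}, whose linearization at $(A,u)$ is precisely $\ti D^\rho_{A,u}$. Regularity of $(A,u)$ means that $\ti D^\rho_{A,u}$ is surjective, so the implicit function theorem in Sobolev completions yields a smooth local model for $M(\Sigma,X)_\rho$ of dimension $\Ind(\ti D^\rho_{A,u})$ with tangent space $\Def(A,u) = \ker(\ti D^\rho_{A,u})$. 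Stability of $(A,u)$ forces its $\K(P)$-stabilizer to be finite, and the quotient by this finite group is an orbifold chart.

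For the index I would decouple $\ti D^\rho_{A,u}$ modulo compact operators, since the cross-couplings in $\d_{A,u,\rho}$ and in $D_{A,u}$ are zeroth-order multiplication by derivatives of $\Phi$ and $J$. The two decoupled pieces are the Dolbeault operator $D_{A,u}$ on $u^* T^{\on{vert}} P(X)$ and the elliptic operator $(\d_A, \d_A^*) : \Omega^1(\Sigma, P(\k)) \to \Omega^2(\Sigma,P(\k)) \oplus \Omega^0(\Sigma,P(\k))$. Riemann--Roch on $\Sigma$, together with the identification of Chern numbers of pull-back bundles on $\Sigma$ with the equivariant pairing via the classifying lift $u_K$, gives
$$\Ind_{\R} D_{A,u} = 2(1-g)\dim_{\C} X + 2(c_1^K(TX),d);$$
the de Rham complex on $\Sigma$ with coefficients in $P(\k)$ has Euler characteristic $\chi(\Sigma)\dim K$, whence $\Ind(\d_A,\d_A^*) = -\chi(\Sigma)\dim K = (2g-2)\dim G$. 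Summing and using $2\dim_{\C} X = \dim X$ recovers the stated formula. The main obstacle is the compactness step, which requires globalizing Uhlenbeck gauge-fixing uniformly on $\Sigma$; once the analytic framework is in place, the smooth-orbifold and dimension claims follow formally from the slice theorem, the implicit function theorem, and standard Atiyah--Singer / Riemann--Roch computations.
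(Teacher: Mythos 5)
Your outline is correct, but note that the paper does not actually prove Theorem \ref{smoothreg}: it is stated as a result due to Mundet and Cieliebak--Gaio--Mundet--Salamon and simply cited to \cite{mun:ham} and \cite{ci:symvortex}. Your three-step argument (Uhlenbeck compactness plus elliptic bootstrapping for the compactness claim, the slice theorem and implicit function theorem applied to $\cF^\rho_{A,u}$ for the orbifold structure, and decoupling $\ti{D}^\rho_{A,u}$ modulo compact operators into $D_{A,u}$ and $(\d_A,\d_A^*)$ for the index) is precisely the standard proof carried out in those references, and your index bookkeeping is consistent with the stated formula.
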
  

Spaces with markings and framings are given as follows.
\begin{definition}  
An {\em $n$-marked} symplectic vortex is a vortex $(A,u)$ together
with $n$-tuple $\ul{z} = (z_1,\ldots, z_n)$ of distinct points on
$\Sigma$.  An {\em isomorphism} of $n$-marked symplectic vortices is
an isomorphism of the underlying vortices, such that the markings are
equal.  A {\em framed vortex} is a collection
$(A,u,\ul{z},\ul{\phi})$, where $(A,u,\ul{z})$ is a marked
vortex and $\ul{\phi} = (\phi_1,\ldots,\phi_n)$ are
trivializations of the fibers of $P$ at $z_1,\ldots, z_n$, that is,
each $\phi_j: P_{z_j} \to K$ is a $K$-equivariant isomorphism.  An
{\em isomorphism} of framed vortices is an isomorphism of the
underlying marked vortices, intertwining the framings: if $\psi: P \to
P'$ denotes the bundle isomorphism, then $ \phi_j = \phi_j' \circ
\psi, j = 1,\ldots, n$.
\end{definition} 
\noindent Let $M_n^K(\Sigma,X)_\rho$ denote the moduli space of
isomorphism classes $n$-marked $\rho$-vortices.  The moduli space
$M_n^K(\Sigma,X)_\rho$ is homeomorphic to the product $
M^K(\Sigma,X)_\rho \times M_n(\Sigma)$
where $M_n(\Sigma)$ denotes the configuration space of $n$-tuples of
distinct points on $\Sigma$.  Let $M_n^{K,\fr}(\Sigma,X)_\rho$ denote
the moduli space of isomorphism classes of framed $n$-marked
$\rho$-vortices.  The {\em framed evaluation map} is
$$ \ev^\fr: M_n^{K,\fr}(\Sigma,X)_\rho \to X^n,
\ \ \ [A,u,\ul{z},\ul{\phi}] \mapsto (\phi_1(u(z_1)),\ldots,
\phi_n(u(z_n))) $$
defined by combining the framings with evaluation at the marked
points.  Define 
$$ \varphi: M_n^{K,\fr}(\Sigma,X)_\rho \to M_n^K(\Sigma,X)_\rho, \quad
[A,u,\ul{z},\ul{\phi}] \mapsto [A,u,\ul{z}] $$
by forgetting the framings.  Since the action of the gauge group
admits slices \eqref{slice} over the regular locus the map
$M_n^{K,\fr,\reg}(\Sigma,X)_\rho$ the map $\varphi$ is an orbifold
principal $K^n$-bundle.  Suppose that $K^n$ acts freely, so that
$M_n^{K,\fr,\reg}(\Sigma,X)_\rho$ is an honest bundle.  Let $\psi:
M^{K,\fr,\reg}_n(\Sigma,X)_\rho \to EK^n$ be a classifying map for
$\varphi$.  Combining $\psi$ with the framed evaluation map $\ev^\fr$
gives rise to a $K^n$-equivariant map $ \ev^\fr \times \psi:
M_n^{K,\fr,\reg}(\Sigma,X)_\rho \to X^n \times EK^n$.  Define the {\em
  evaluation map} $ \ev_\Z: \ M_n^{K,\reg}(\Sigma,X)_\rho\to (X^n
\times EK^n)/K^n = X_K^n $ by descending $\ev^\fr \times \psi$ to the
quotient.  Pull-back by $\ev_\Z$ induces a map in equivariant
cohomology with integral coefficients.  More generally, the
classifying map exists after passing to the classifying space of
$M_n^{K,\fr,\reg}(\Sigma,X)_\rho$, and we obtain a pull-back for
cohomology in rational coefficients
\begin{equation} \label{evals}
 \ev^* : H_K(X,\Q)^n \to H(M_n^{K,\reg}(\Sigma,X)_\rho,\Q) .\end{equation} 

\subsection{Compactification}

There is a natural compactification of the moduli space
of vortices obtained by allowing bubbling:

\begin{definition} \label{nodal}  
Let $\Sigma$ be a compact connected smooth complex curve and $X$ a
Hamiltonian $K$-manifold.  A {\em nodal gauged holomorphic map} to $X$
with principal component $\Sigma$ consists of a datum
$(P,A,C,{u},\ul{z})$ where $P$ is a principal $K$-bundle on $\Sigma$;
$A$ is a connection on $P$; $C$ is a compact nodal curve, $u: C \to
P(X)$ is a holomorphic map such that the composition $\pi_{P(X)} \circ
u: C \to \Sigma$ has homology class $[\Sigma]$; $\ul{z} =
(z_1,\ldots,z_n) \in {C}^n$ are distinct, smooth points of ${C}$.  The
{\em principal component} of a nodal gauged holomorphic map is the
(unmarked) gauged holomorphic map $(P,A,C_0,u_0)$ where $C_0 \subset
C$ is the unique component of $C$ on which the composition $\pi_{P(X)}
\circ u | C_0$ is non-constant.  A {\em nodal vortex} is a nodal
gauged map such that the principal component $(A,u_0)$ is a vortex.  A
nodal vortex is \emph{polystable} if the underlying map $u$ is
stable. A polystable vortex is {\em stable} if it has finite
automorphism group.  An {\em isomorphism} of nodal $\rho$-vortices
$(P,A,C,{u},\ul{z}),(P',A',C',{u}',\ul{z}')$ consists of an
isomorphism of nodal curves $f: {C} \to {C}'$ and a bundle isomorphism
$k: P \to P'$ such that $ kA = A'$, $ u' \circ f = k u$, and $f(z_i) =
z_i', i = 1,\ldots, n $.  A {\em framed nodal vortex} consists of a
nodal vortex together with framings at the attaching points of the
bubbles $ \phi_i : P_{\hat{z_i}} \to K, i = 1,\ldots, n .$ The {\em
  combinatorial type} of a gauged nodal map $(P,A,C,{u},\ul{z})$ is
the rooted graph $\Gamma$ whose vertices are the components of $C$,
whose finite edges represent the nodes $w_i^\pm, i =1,\ldots, m$,
semi-infinite edges represent the markings $z_1,\ldots, z_n$, and
whose root vertex represents the principal component $C_0$.  The {\em
  homology class} of a nodal vortex is the class $u_{K,*}[C] \in
H_2^K(X,\Z)$ where $u_K: C \to EK \times_K X$ is the lift of a
classifying map $C \to EK$ of $(\pi_{P(X)} \circ u)^* P \to C$
corresponding to $u$.
\end{definition} 

We often abbreviate the data of a nodal vortex as $(A,u)$ to save
space.  Let $M_{n,\Gamma}^K(\Sigma,X,d)_\rho$
resp. $M_{n,\Gamma}^{K,\fr}(\Sigma,X,d)_\rho$ be the moduli space of
isomorphism classes of polystable resp. framed polystable
$\rho$-vortices of combinatorial type $\Gamma$ and homology class $d
\in H_2^K(X,\Z)$, and $\ol{M}^K_n(\Sigma,X,d)_\rho$ be the union over
types
$$ \ol{M}^K_n(\Sigma,X,d)_\rho = \bigcup_\Gamma M_{n,\Gamma}^K(\Sigma,X,d)_\rho .$$
A notion of {\em Gromov convergence} of nodal vortices, described in
\cite{ott:remov}, defines a topology on $\ol{M}^K_n(\Sigma,X,d)_\rho$.

To define the linearized operator associated to a vortex, we recall
that the {\em normalization} $\ti{C}$ of a nodal curve $C$ is the
disjoint union of the irreducible components.  Let $\Omega^0(C,{u}^*
T^{\on{vert}} P(X))_{1,p} $ denote the subspace of
$\Omega^0(\ti{C},{u}^* T^{\on{vert}} P(X))_{1,p} $ consisting of
sections that agree at the nodes.

\begin{definition}  
Given a nodal vortex $(A \in \A(P),u: C \to P(X))$, let
$\ti{D}^\rho_{A,{u}}$ denote the {\em linearized operator}
\begin{multline}
  \Omega^1(\Sigma,P(\k))_{1,p} \oplus \Omega^0(C,{u}^* T^{\on{vert}}
  P(X))_{1,p} \\ \to (\Omega^{0} \oplus \Omega^2)(\Sigma,P(\k))_{0,p}
  \oplus \Omega^{0,1}(\ti{C},{u}^* T^{\on{vert}} P(X))_{0,p}
  \\(a,v) \mapsto (\ti{D}_{A,u_0}(a,v_0), (D_{u_j}
  v_j)_{j=1}^k )
\end{multline}
given by the operator $\ti{D}_{A,u}^\rho$ on the principal component
and the linearized Cauchy-Riemann operator $\ti{D}_{v_j}$ on the
bubbles.  We say that $(A,u)$ is {\em regular} iff
$\ti{D}^\rho_{A,{u}}$ is surjective.
\end{definition} 

\begin{theorem} \label{compact1} Let $X$ be a compact Hamiltonian
  $K$-manifold.  For any $c > 0$, the union of components
  $\ol{M}^K_n(\Sigma,X,d)_\rho$ with $ \lan d,[\omega_K] \ran < c$ is
  a compact, Hausdorff space.  The regular locus
  $\ol{M}^{K,\reg}(\Sigma,X,d)_\rho$ has the structure of a partially
  smooth topological (non-canonically $C^1$) orbifold.
\end{theorem}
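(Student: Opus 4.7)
The plan is to combine a uniform energy bound with Gromov-type compactness for vortices, then establish the orbifold structure on the regular locus via the implicit function theorem and the slice construction \eqref{slice}.

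First I would extract a uniform energy bound. For any $\rho$-vortex $(A,u)$ the terms $|\olp_A u|^2$ and $|F_A + \rho \Vol_\Sigma u^* P(\Phi)|^2$ in \eqref{energyaction} vanish, so $E(A,u) = D(u) = \lan d, [\om_K] \ran$. For a nodal $\rho$-vortex the total energy is the sum of the principal energy and the energies $\int |\d v_j|^2$ on the sphere bubbles, which together equal $\lan d, [\om_K] \ran$ by the splitting \eqref{splitting} of homology. Thus the hypothesis $\lan d, [\om_K] \ran < c$ gives a uniform bound on $E$ along any sequence.

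Next I would run Gromov compactness for vortices on the principal component and standard Gromov compactness for pseudoholomorphic maps on the bubbles. Concretely: given a sequence $(A_\alpha, u_\alpha, \ul{z}_\alpha)$ with $E(A_\alpha,u_\alpha) < c$, one passes to a subsequence and applies gauge transformations $k_\alpha \in \K(P)$ so that $k_\alpha A_\alpha$ is uniformly bounded in $C^0$; on the complement of finitely many concentration points the derivatives $|\d_{A_\alpha} u_\alpha|$ stay bounded and elliptic regularity for the vortex equations (as in \cite{ciel:vor}, \cite{ci:symvortex}, \cite{mun:ham}, and reviewed in \cite{fixed}) gives $C^\infty_\loc$ convergence. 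At each concentration point one rescales the sections (noting that the connection term rescales away since $\rho \Vol_\Sigma$ scales to zero) and uses removal of singularities to extract holomorphic sphere bubbles in the fibers of $P(X)$, iterating until all energy is accounted for. Stabilizing yields a polystable $\rho$-vortex of the same homology class; uniqueness of the Gromov limit (up to the defined notion of isomorphism) produces the Hausdorff property.

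For the second assertion I would work near a regular stable polystable vortex $({C},A,{u},\ul{z})$. On the principal component the slice \eqref{slice} identifies a neighborhood in $M_{\Gamma,n}(P,X,d)_\rho$ with the zero set of the map $\cF_{A,u}^\rho$ from \eqref{cutout}; regularity means its linearization $\ti{D}^\rho_{A,{u}}$ is surjective, so the implicit function theorem gives a local model by $\ker \ti{D}^\rho_{A,{u}}$ modulo the finite automorphism group. Gluing in the sphere bubbles at the nodes is performed by the standard pregluing plus Newton iteration; the resulting charts are smooth in the interior of each stratum $M_\Gamma$ but only $C^1$ transverse to the boundary divisors where nodes are smoothed, which is the meaning of \emph{partially smooth}. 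Patching these charts yields the asserted topological orbifold structure.

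The main obstacle is the bubbling analysis in the first step. The subtlety is that the curvature term $F_A = -\rho \Vol_\Sigma u^* P(\Phi)$ is of order $\rho$ while the energy density of $\d_A u$ can concentrate; one must verify that in the rescaled limit the connection becomes flat and only genuine holomorphic spheres in $P(X)$ form, and that no energy is lost in the neck regions. This is precisely the content of the vortex Gromov compactness theorem; once it is invoked, everything else reduces to standard Fredholm and deformation-theoretic arguments already set up in \cite{deform} and \cite{fixed}.
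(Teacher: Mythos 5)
Your proposal is correct and follows essentially the same route as the paper, which does not prove Theorem \ref{compact1} in the text but attributes the compactness to Mundet's Gromov compactness for vortices (and \cite{fixed}) and the orbifold structure to the gluing/deformation theory of \cite{deform} --- exactly the two steps you flesh out. The only cosmetic point is that the energy--area identity \eqref{energyaction} must be rewritten with the $\rho$-weighted norms before concluding $E=D(u)$ for a $\rho$-vortex, but this does not affect the uniform energy bound.
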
 
\noindent This generalization of Gromov compactness is proved in Ott
\cite{ott:remov}, with special cases proved previously in Mundet
\cite{mun:ham}.  That the topology defined by Gromov convergence is
compact follows using local distance functions as in
\cite[p. 134]{ms:jh}.  The orbifold charts are given by universal
deformations constructed in \cite{deform}.  The evaluation maps on the
principal stratum defined in \eqref{evals} extend to maps
$$ \ev^* : H_K(X,\Q)^n \to H(\ol{M}_n^{K,\reg}(\Sigma,X)_\rho,\Q) $$
and the forgetful morphism extends to map 
$f: \ol{M}^K_n(\Sigma,X)_\rho \to \ol{M}^K_n(\Sigma)$
forgetting $A$, replacing $u$ with its
composition with the projection $\pi_{P(X)}$, and collapsing any
unstable component.  

\subsection{Semistable gauged maps} 

Let $X$ be a smooth projectively-embedded $G$-variety.  Recall that a
{\em principal $G$-bundle} over a scheme $S$ is an
$S$-scheme $P$ with a right $G$-action such that $P$ is locally trivial in
the \'etale topology on $S$.

\begin{definition}  
 An $n$-marked {\em nodal gauged map} to $X$ over a scheme $S$ with
 principal component $\Sigma$ consists of a datum $(P,C,u,\ul{z})$
 where $(C,\ul{z})$ is a family of $n$-marked pre-stable curves (see
 e.g. Behrend-Manin \cite{bm:gw}) of genus that of $\Sigma$, $P \to
 \Sigma \times S$ is a principal $G$-bundle; and ${u}: {C} \to P(X) :=
 (P \times X)/G$ is a family of stable maps such that composition of
 ${u}$ with the projection $P(X) \to \Sigma$ has homology class
 $[\Sigma]$.
\end{definition} 
\noindent The restriction on the homology class and genus means that
the domain $C$ of a nodal gauged map over a point $S = \{ s \}$ has a
principal component $C_0$ isomorphic to $\Sigma$ under the composition
of $C \to P(X)$ with $P(X) \to \Sigma$ and a number of bubble
components $C_1,\ldots, C_m$ mapping to points in $\Sigma$.

Mundet's stability condition \cite{mund:corr}, which combines that of
Ramanathan \cite{ra:th} for principal bundles and Mumford, for
finite-dimensional actions, is given as follows.  Recall that a
subgroup $R \subset G$ is {\em parabolic} iff $G/R$ is compact.  A
{\em Levi subgroup} of a parabolic subgroup $R$ is a maximal reductive
subgroup.  Each Levi subgroup has a {\em maximal unipotent}
complementary subgroup $U \subset G$ such that $R = LU$.  Quotienting
by $U$ defines a surjective homomorphism $p: R \to L$.  A {\em
  parabolic reduction} of a principal $G$-bundle $P$ consists of a
parabolic subgroup $R \subset G$, a principal $R$-bundle $P'$ and an
isomorphism $P'(G) \to P$.  There is a canonical bijection between
parabolic reductions with subgroup $R$ and sections $\sigma: \Sigma
\to P/R$, given by $\sigma \mapsto \sigma^*P$, the pull-back of the
$R$-bundle $P \to P/R$ under $\sigma: \Sigma \to P/R$.  Given such a
section $\sigma$ let $p_* \sigma^* P $ denote the associated
$L$-bundle.  Let $i: L \to G$ be the inclusion.

\begin{definition} 
Let $P \to \Sigma$ be a principal $G$-bundle and $u: \Sigma \to P(X)$
a section.  Given a parabolic reduction $\sigma: \Sigma \to P/R$, the
{\em associated graded bundle} is $\Gr(P) = i_* p_* \sigma^* P$; note
that $\Gr(P)$ has a canonical reduction to $L$ given by $p_* \sigma^*
P$.  Let $Z$ denote the center of $L$, $\z$ its Lie algebra, and
$\lambda \in \z$ antidominant such that $\exp(\lambda) = 1$.  The
family of automorphisms of $R$ given by conjugation by $z^{\lambda/2
  \pi i} = \exp(\ln(z) \lambda/ 2 \pi i)$ induces a family of bundles
$ P_{\sigma,\lambda} := (\sigma^* P \times \C) \times_R R$ (the action
of $R$ on itself is twisted by conjugation) with central fiber $(p_*
\sigma^* P)(R)$.  The stable section ${u}$ extends canonically over
the generic fibers by multiplication by $z^{\lambda/2 \pi i}$ and over
the central fiber, by properness of stable maps, to an {\em associated
  graded stable map} denoted $\Gr({u})$ from a nodal curve $C$ to
$(\Gr(P))(X)$.
\label{mundetsemistable}
Suppose that $\lambda$ defines a weight of $L$ via the inner product
on $\l \subset \g$.  The {\em degree} of the pair $(\sigma,\lambda)$
is defined in terms of the associated graded by
\begin{equation} \label{degree}
 \mdeg_{\sigma,\lambda}(P,u) = \int_\Sigma c_1( p_* \sigma^* P
 \times_L \C_\lambda) + \rho  \mu_\lambda( \Gr(u)_0) \Vol_\Sigma 
\end{equation}  
where $\Gr({u})_0: \Sigma \to P(X)$ denotes the principal component of
$\Gr({u}): {C} \to P(X)$, and $\mu_\lambda(\Gr(u)_0)$ is the weight of
the one-parameter subgroup $\C^*_\lambda$ generated by $\lambda$ on
any fiber $\mO_X(1)_{\Gr(u)_0(z)}, z \in \Sigma$.  The pair
$(\sigma,\lambda)$ is {\em destabilizing} iff $
\mdeg_{\sigma,\lambda}(P,u) > 0 $.  $(P,{u})$ is {\em unstable} if
there exists a de-stabilizing pair $(\sigma,\lambda)$, {\em
  semistable} if it is not unstable, {\em stable} if there are no
pairs with $\mdeg_{\sigma,\lambda}(P,u) \ge 0$, and {\em polystable}
if it is semistable but not stable and $(P,{u})$ is isomorphic to its
associated graded for any pair $(\sigma,\lambda)$ satisfying the above
with equality.
\end{definition}  

The weight $\mu_\lambda( \Gr(u)_0)$ can be described in terms of the
moment map as follows.  The underlying smooth principal $G$-bundle
admits a reduction of structure group from $G$ to $K$.  Suppose that
$X$ is equipped with moment map $\Phi: X \to \k^\dual \cong \k$
induced by the Fubini-Study equivariant symplectic form.  The map
$\Phi$ induces a map $P(\Phi): P(X) \to P(\k)$ taking values in
$P(\l)$ on the $\lambda$-fixed locus.  Being $L$-invariant $\lambda$
induces an element $P(\lambda) \in P(\l)$.  Then
\begin{equation}\label{eq:deg-mm} 
\mu_\lambda(\Gr(u)_0) = \lan P(\Phi) \circ \Gr({u})_0(z), P(\lambda)
\ran , \forall z \in \Sigma .
\end{equation}

Let $\ol{\frak{M}}^G_n(\Sigma,X)$ denote the category of nodal
$n$-marked gauged maps to $X$ with principal component $\Sigma$.

\begin{theorem}  \label{artin} \cite{cross} 
$\ol{\frak{M}}^G_n(\Sigma,X)$ has the structure of a (non-finite-type,
  non-separated) Artin stack.  The subcategory
  $\ol{\M}_n^G(\Sigma,X)_\rho$ of $\rho$-semistable gauged maps has
  the structure of an open substack.  If all automorphism groups are
  finite, $\ol{\M}_n^G(\Sigma,X)_\rho$ (or rather, any component
  $\ol{\M}_n^G(\Sigma,X,d)_\rho$ since $\ol{\M}_n^G(\Sigma,X)_\rho$ is
  not finite type) is a Deligne-Mumford stack equipped with a
  canonical perfect obstruction theory, and its coarse moduli space is
  homeomorphic to the moduli space of symplectic vortices
  $\ol{M}_n^K(\Sigma,X)_\rho$.  For each constant $c > 0$, the union
  of components $\ol{\M}_n^G(\Sigma,X,d)_\rho$ with homology class $d
  \in H_2^G(X)$ satisfying $\lan d, [\omega_G] \ran < c$ is proper.
\end{theorem}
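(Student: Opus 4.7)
The plan is to assemble the Artin stack structure from standard pieces and then verify each subsequent refinement in turn. First, I interpret a prestable gauged map $(C,P,u)$ over a base $S$ as the datum of a prestable rooted curve $C \to S$ equipped with a morphism to the Artin quotient stack $\Sigma \times (X/G)$ whose composition with the projection to $\Sigma$ has degree one. This exhibits $\ol{\frak{M}}_n(\Sigma,X)$ as a relative Hom-stack fibered over the stack of prestable curves and the bundle stack $\Bun_G(\Sigma)$. Since $X/G$ is the quotient of a smooth projective scheme by a reductive group, standard representability results for Hom-stacks into Artin stacks realize $\ol{\frak{M}}_n(\Sigma,X)$ as an Artin stack locally of finite type. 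Non-separation arises from the usual jumping of bundle isomorphism type in families, and non-finite-typeness from the infinitely many combinatorial types of the nodal curve.

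Second, for openness of the $\rho$-semistable substack I would show that the supremum of $\mdeg_{\sigma,\lambda}(P,u)$ over pairs $(\sigma,\lambda)$ is upper semicontinuous in families. The degree \eqref{degree} decomposes as a $c_1$ pairing, locally constant on each stratum indexed by the combinatorial type of the reduction (of which only finitely many can occur in a bounded family, by Grothendieck boundedness for parabolic reductions), plus the smoothly varying integral of $\lan P(\Phi) \circ \Gr(u)_0, \lambda \ran$. A Ramanathan-type argument, in the Mundet-refined version taking the section into account, then shows that $\ol{\M}_n(\Sigma,X)_\rho$ is an open substack.

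Third, under the finite automorphism hypothesis, the inertia stack is finite over $\ol{\M}_n(\Sigma,X)_\rho$, so the diagonal is unramified and the stack is Deligne-Mumford. The perfect obstruction theory arises from the two-term cotangent complex of the morphism from $\ol{\M}_n(\Sigma,X)_\rho$ to the product $\mathfrak{M}_g \times \Bun_G(\Sigma)$: deformations of the bundle are controlled by $H^*(\Sigma,\on{ad}(P))$ and deformations of the section by $H^*(C, u^* T^{\on{vert}}P(X))$, assembled into one complex whose Euler characteristic matches the Fredholm index of $\ti D^\rho_{A,u}$ in Theorem \ref{smoothreg}.

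The main obstacle is properness. I would reduce the algebraic statement to the symplectic compactness result: via Mundet's correspondence \cite{mund:corr}, $\rho$-semistable algebraic gauged maps of bounded area correspond to symplectic $\rho$-vortices of bounded energy, and Theorem \ref{compact1} then provides compactness of the underlying coarse moduli space. To upgrade this to properness of the stack itself, I would verify the valuative criterion using a Langton-type semistable reduction adapted to gauged pairs: given a family over a punctured disk one extends the bundle and section by Hartogs across the puncture, and then performs a finite sequence of elementary modifications along the central fiber to land in the semistable locus. The delicate step will be reconciling bubbling in $C$ with bundle-type changes inside a single valuative argument; this is where the non-separation of $\ol{\frak{M}}_n(\Sigma,X)$ makes purely algebraic arguments subtle and the symplectic compactness input becomes essential.
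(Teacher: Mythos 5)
The paper itself does not prove this theorem: the text immediately following the statement reads ``This is proved in \cite{cross}, see also \cite[Section 4]{toll:gw1}'' and only records that the relative perfect obstruction theory is $R p_* e^* TX$ over the stack of prestable rooted curves. Your outline is consistent with what the paper does indicate and with the strategy of the cited references: the Hom-stack/$\Bun_G(\Sigma)$ description for the Artin structure, openness of Mundet semistability, the two-term relative obstruction theory assembling bundle and section deformations (which is exactly the $R p_* e^* T$ of the quotient, as in the proof of Theorem \ref{proper}), and properness deduced from the symplectic compactness of Theorem \ref{compact1} via the coarse moduli space --- precisely the route the authors take for the parallel Theorem \ref{proper}, and one they themselves flag as unsatisfying (``It would be better to have a proof of properness that does not use the symplectic description''). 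Two points in your sketch are glossed and deserve care. First, the term $\lan P(\Phi) \circ \Gr({u})_0, \lambda \ran$ is not ``smoothly varying'' in families: $\Gr({u})_0$ is a Gromov limit under the $\C^*_\lambda$-flow and can jump, so openness of semistability requires a genuine upper-semicontinuity argument for the full Mundet weight together with the boundedness of reductions you mention; this jumping is the actual content of openness, not a routine continuity statement. Second, the Langton-type semistable reduction you propose as the properness mechanism is carried out nowhere in this paper; if you rely instead on Theorem \ref{compact1} you are reproducing the paper's argument, and you should then also invoke the Keel--Mori relation between properness of a Deligne--Mumford stack and properness of its coarse space to close the loop.
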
 

Let $\ol{\frak{M}}_n(\Sigma)$ be the moduli stack for $X$ and $G$
trivial; this is the moduli stack of $n$-marked pre-stable maps to
$\Sigma$ of class $[\Sigma]$ and genus that of $\Sigma$.  A relative
perfect obstruction theory for the morphism
$\ol{\M}_n^G(\Sigma,X,d)_\rho \to \ol{\frak{M}}_n(\Sigma)$ is defined
by
$R p_* (e^* T(X/G))^\dual$, where $p: \ol{\U}_n^G(\Sigma,X,d)_\rho \to
\ol{\M}_n^G(\Sigma,X,d)_\rho$ is the universal curve and $e:
\ol{\U}^G_n(\Sigma,X,d)_\rho \to X/G$ the universal morphism, together
with its canonical morphism to the cotangent complex.  Let $\ev:
\ol{\M}_n^G(\Sigma, X)_\rho \to (X/G)^n$ denote the evaluation map at
the marked points, $(P,C,u,\ul{z}) \mapsto (\ul{z}^* P,u \circ
\ul{z})$.  Let $\ev^*$ denote the induced pull-back in rational
cohomology of coarse moduli spaces,
$$ \ev^* : H_G(X,\Q)^n \to H(\ol{M}^G_n(\Sigma, X)_\rho,\Q) .$$
As explained in \cite{cross} there exists a forgetful morphism
$f:\ol{\M}_n^G(\Sigma,X)_\rho \to \ol{\M}_n(\Sigma) :=
\ol{\M}_{\on{genus}(\Sigma),n}(\Sigma,[\Sigma])$ which maps $(C,P,u,\ul{z})$ to the
stable map to $\Sigma$ obtained from $(C,\pi_{P(X)} \circ u,\ul{z})$
by collapsing unstable components.

\subsection{Gauged Gromov-Witten invariants} 

We assume that the reader is familiar with twisted Gromov-Witten
invariants as in Coates-Givental \cite{co:qrr}.  Given a
$G$-equivariant vector bundle $V \to X$, define the {\em index}
$$\Ind(V) = R p_* e^* V \in \Ob(D^b
\on{Coh}(\ol{\M}_n^G(\Sigma,X)_\rho)) $$  
where $p$, as before, is the projection from the universal curve and
$e$ is the universal morphism.  If $V$ is a $G$-representation, then
by $\Ind(V)$ we mean the index of the trivial vector bundle with fiber
$V$.  Denote the Euler class of $\Ind(V)$ by 
$$\Eul_{\C^*}(\Ind(V)) \in
H(\ol{M}^G_n(\Sigma,X)_\rho,\Q)[\zeta,\zeta^{-1}] ;$$
here $\zeta$ the equivariant parameter for the action of $\C^*$ by
scalar multiplication, acting trivially on the base.

\begin{definition}   Suppose that $X,G,\rho,d$ are as above 
so that every $\rho$-semistable gauged map of class $d$ has finite automorphism
group.  The {\em $V$-twisted gauged Gromov-Witten invariant} associated to
$\alpha \in H_G(X)^n, \beta \in H(\ol{M}_n(\Sigma))$ is
$$ \lan \alpha, \beta \ran_{\rho,d,V} :=
\int_{[\ol{\M}^G_n(\Sigma,X,d)_\rho]} \ev^* \alpha \cup f^* \beta
\cup \Eul_{\C^*} (\Ind(V)) \in \Q[\zeta,\zeta^{-1}] .$$
\end{definition} 

The splitting axiom for these invariants is discussed in \cite{cross}:
they form a {\em cohomological trace} on the cohomological field
theory defined by the equivariant Gromov-Witten invariants introduced
by Givental \cite{gi:eq}.  In the symplectic setting the invariants
can be defined under the assumption that every vortex is regular.
Pulling back and integrating using the usual orbifold fundamental
class gives a symplectic definition of the gauged Gromov-Witten
invariants.

\section{Vortices with zero area and genus}

 In this section we construct {\em zero-area gauged Gromov-Witten
   invariants} in genus zero.  There are two approaches: in the
 symplectic approach
the moduli space of zero-area vortices $\ol{M}_n^K(\P^1,X)_0$ is constructed via 
degenerate symplectic reduction on the moduli space of stable maps
$\ol{M}_{0,n}(\P^1 \times X).$
In the second approach, a 
moduli stack of zero-area gauged maps
$\ol{\M}_n^G(\P^1,X)_0$ 
is constructed via a stability condition
Definition \ref{d:zero-s} for $\ol{\M}_{0,n}(\P^1 \times X)$.

\subsection{Zero-area vortices}

Let $X$ be a 
compact Hamiltonian $K$-manifold with moment map $\Phi: X \to \k^\dual \cong \k$
equipped with an invariant compatible almost complex structure $J$.
Let $\Map(\P^1,X)$
denote the space of smooth maps from $\P^1$ to $X$.  Define
$$\phi: \Map(\P^1,X) \to \k, \ \ \phi(u) = \int_{\P^1} u^* \Phi
\Vol_{\P^1} .$$
\begin{definition}  A {\em zero-area-vortex} is a 
map $u \in \Map({\P^1},X)$ satisfying the equations $\phi(u) = 0$ and
$\olp u = 0$.  An {\em isomorphism} of zero-area vortices $u_0,u_1$ is
an element $k \in K$ with $ku_0 = u_1$.  A zero-area vortex is {\em
  stable} if it has finite automorphism group.
\end{definition}  
\noindent Let $M^K({\P^1},X,d)_0$ be the moduli space of
isomorphism classes of zero-area vortices of homology class $d \in H_2(X)$:
$$M^K({\P^1},X,d)_0 = \left\{ u \in \Map(\P^1,X) \ \left| \ \olp u =
0, \ \phi(u) = 0, \ u_* [\P^1] =d \right. \right\} / K .$$
Since any flat connection on $\P^1 \times K$ is gauge equivalent to
the trivial one and has automorphism group isomorphic to $K$, the
space $M^K({\P^1},X,d)$ may be viewed a moduli space of isomorphism
classes of pairs $(A,u)$ with $F_A = 0 , \olp_A u = 0$; this justifies
the use of the term vortex.  For $d \in H_2^K(X,\Z)$ we define
$M^K({\P^1},X,d)_0$ to be the union of components $M^K(\P^1,X,d')_0$
where $d' \in H_2(X,\Z)$ maps to $d$ under the inclusion of the fiber
$X \to EK \times_K X$.

The formal tangent space to $M^K({\P^1},X)_0$ at any point $[u]$ may
be identified with the kernel of a Fredholm operator, as follows.
Define
$$ E_u: \Omega^0({\P^1},u^*TX) \to \k, \ \ \ v \mapsto
\int_{\P^1} L_v \Phi \, \Vol_{\P^1}
$$
$$ E_u^*: \Omega^0({\P^1},u^* TX) \to \k, \ \ \ v \mapsto
\int_{\P^1} L_{Jv} \Phi\Vol_{\P^1} .$$
Thus $E_u(v)$ resp. $E_u^*(v)$ is the derivative of the average of
the moment map with respect to $v$ resp. $Jv$.  Then
$M^K({\P^1},X)_0$ is locally homeomorphic to the zero set of 
\begin{multline*} \cF_{u}^0 : \Omega^0({\P^1},u^* T^{\on{vert}} P(X))
  \to (\k\oplus\k) \oplus \Omega^{0,1}({\P^1},u^* T^{\on{vert}} P(X))
  \\ v \mapsto \left( \int_{\P^1} \exp_u(v)^*
      P(\Phi), E_u^* v, \Psi_u(v)^{-1} \ol{\partial}
      \exp_u(v) \right)
\end{multline*}
where $\Psi_u(v)^{-1}$ is parallel transport from $\exp_u(v)$ to $u$,
using a Hermitian connection.

\begin{definition} The {\em linearized operator} for a zero-area
  vortex $u$ is the linearization of $\cF_u^0$,
\begin{equation} \label{linzer} \ti{D}^0_{u}: \Omega^0({\P^1},u^*
    TX) \to (\k \oplus \k) \oplus \Omega^{0,1}({\P^1},u^* TX), \ \ \ \
    v \mapsto (E_u v, E_u^* v, D_{u} v ) .
\end{equation}
where $D_u$ is the standard linearized Cauchy-Riemann operator.  We
say that a zero-area-vortex $u$ is {\em regular} iff the operator
$\ti{D}^0_{u}$ is surjective.  The space of {\em infinitesimal
deformations} of $u$ is $ \Def(u) = \ker(\ti{D}^0_u) .$
\end{definition} 

\noindent Let $M^{K,\reg}({\P^1},X)_0 \subset M^K({\P^1},X)_0$ denote
the locus of regular zero-area vortices.  It has the structure of a
smooth, finite-dimensional orbifold; the proof is similar to that for
pseudoholomorphic maps and will be omitted.

\subsection{Compactification}

The moduli space $M^K(\P^1,X,d)_0$ admits a compactification by allowing
bubbling in the fibers, as in the case of finite vortex parameter
discussed in Theorem \ref{compact1}.  One way of constructing this
compactification is by the following quotient construction as a formal
symplectic quotient of the moduli space of stable maps.  Let
$\ol{M}_{0,n}(\P^1 \times X,(1,d))$ denote the moduli space of
parametrized genus $0$ stable holomorphic maps ${u} = ({u}_1,{u}_2) :
{C} \to \P^1 \times X$ of homology class $(1,d)$.  Define
$$ \phi: \ol{M}_{0,n}(\P^1 \times X,(1,d)) \to \k, \ \ \ {u}
\mapsto \int_{{C}} {u}_2^* \Phi \  {u}_1^* \Vol_{\P^1} .$$
The map $\phi$ can be considered as a moment map for the action of $K$
on $\ol{M}_{0,n}(\P^1 \times X,(1,d))$ for a closed two-form given by
integrating the pull-back of $\omega \in \Omega^2(X)$ over the
principal component as in \eqref{sympform}.  By definition of
zero-area vortices there is an isomorphism
\begin{equation} \label{quotient}
 \ol{M}_n^K(\P^1,X,d)_0 \cong \phi^{-1}(0)/K =: \ol{M}_{0,n}(\P^1 \times
 X,(1,d)) \qu K .\end{equation}

The local structure of the regular locus is described as follows:

\begin{definition}  Let $u:C\to \P^1 \times X$ be a zero-area vortex with combinatorial type
  $\Gamma$. The {\em linearized operator} for $u$ is
$$ \ti{D}^0_{{u}} : \Omega^0(C,{u}^*T(\P^1 \times X)) \to (\k \oplus
  \k) \oplus \Omega^{0,1}(\ti{C}, {u}^* T(\P^1 \times X)), \quad v
  \mapsto (E_u v, E_u^* v, D_u v) $$
where $D_u$ is the usual linearized Cauchy-Riemann operator on
$\Omega^0(C,u^* TX)$, $E_u, E_u^* $ are the operators on the principal
component and $\ti{C}$ is the normalization of $C$.  We say that ${u}$
is {\em regular} iff $\ti{D}^0_{{u}}$ is surjective.  The space of {\em
  infinitesimal deformations of constant type} of ${u}$ is
$$ \Def_\Gamma({u}) = \ker(\ti{D}^0_{{u}})/\aut({C}) .$$
The space of {\em infinitesimal deformations} of ${u}$ is the space of
infinitesimal deformations of fixed type, plus the space of gluing
parameters:
\begin{equation} \label{defs}  
\Def({u}) = \Def_\Gamma({u}) \oplus \bigoplus_{i=1}^m T_{w_i^+}
{C} \otimes T_{w_i^-} {C} \end{equation}
\end{definition} 
\noindent 
where $m$ is the number of nodes of $C$.

\begin{theorem}  \label{compact2}
For any $c> 0$, the union over $d \in H_2^K(X,\Z)$ with $\lan [\omega_K]
, d \ran < c$ of $\ol{M}_n^K({\P^1},X,d)_0$ is a compact, Hausdorff
space.  The regular locus $\ol{M}_n^{K,\reg}({\P^1},X)_0$
admits the structure of a stratified-smooth topological orbifold, with tangent
space at $[u]$ isomorphic to $\Def(u)$. 
\end{theorem}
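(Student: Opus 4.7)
The plan is to deduce the theorem from Gromov compactness for stable maps to $\Sigma\times X$, plus a check that the moment-map constraint $\phi(u_0)=0$ is closed under Gromov convergence, followed by a Kuranishi-type model for the orbifold structure. The key observation is that a polystable zero-area vortex of class $d\in H_2(X,\Z)$ is, up to the $K$-action on $X$, the same as a stable map $(\pi,{u}):{C}\to \Sigma\times X$ of class $(1,d)$ subject to the single integral constraint $\phi(u_0)=0$ on the principal component.

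For compactness, I will take a sequence $\{[{u}_\alpha]\}$ with $\langle[\omega_K],d_\alpha\rangle<c$. Since any zero-area vortex satisfies $d_\alpha\in H_2(X,\Z)$, the ordinary symplectic area of ${u}_\alpha:C_\alpha\to X$ is uniformly bounded, and only finitely many classes $d_\alpha$ occur. Gromov compactness for $\ol{M}_{0,n}(\Sigma\times X,(1,d))$ then produces a subsequence converging in the Gromov topology to a stable map ${u}_\infty$. To verify $\phi(u_{0,\infty})=0$: on the principal component $u_{0,\alpha}\to u_{0,\infty}$ in $C^\infty_\loc$ off the finite bubbling locus, and $u^*\Phi\,\Vol_\Sigma$ is uniformly bounded in $L^\infty(\Sigma)$ because $X$ is compact, so dominated convergence gives $\phi(u_{0,\infty})=\lim_\alpha\phi(u_{0,\alpha})=0$. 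The Hausdorff property is then inherited from the compact Hausdorff quotient $\ol{M}_{0,n}(\Sigma\times X,(1,d))/K$.

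For the orbifold structure on $\ol{M}_n^\reg(\Sigma,X)_0$, I will follow the Kuranishi-model construction for stable maps, augmented by the two finite-dimensional linear conditions $(E_u,E_u^*)$ coming from the moment-map constraint. Fix a regular stable vortex $[{C},{u}]$ with (finite) automorphism group $\Aut({u})\subset K$. In a Banach neighborhood of $u$ the zero set of $\cF_u^0$, coupled with the gluing parameters $\bigoplus_{i=1}^m T_{w_i^+}{C}\otimes T_{w_i^-}{C}$, will be parametrized by $\Def({u})$ as in \eqref{defs} via the implicit function theorem applied to the surjective Fredholm operator $\ti{D}_{{u}}$. Dividing by the finite group $\Aut({u})$ then yields an orbifold chart; charts on different strata will be shown to match continuously but only stratified-smoothly, with the top stratum smooth and lower (nodal) strata cut out by vanishing of the corresponding gluing parameters.

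The main technical obstacle will be the gluing estimates across strata, requiring pre-gluing and quadratic error analysis in the style of \cite{ms:jh} adapted to the presence of the moment-map conditions $(E_u,E_u^*)$, along the lines already carried out in \cite{fixed} for finite-area vortices. By contrast, the extra finite-dimensional operators $E_u, E_u^*$ are bounded perturbations of $D_u$ whose transversality on $\ker(D_u)$ is guaranteed by the Hamiltonian identity verified in Lemma~\ref{stable}; they add no new Fredholm difficulties and can be absorbed into the standard implicit function argument once the Cauchy-Riemann part is set up. Similarly, the closedness of $\phi=0$ under Gromov limits is routine once one notes that bubbles lie in fibers of $\Sigma\times X\to\Sigma$ and so contribute nothing to the principal-component integral.
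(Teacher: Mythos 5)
Your proposal is correct and follows essentially the same route as the paper's sketch: compactness via Gromov compactness for stable maps together with the observation that $C^\infty_{\loc}$-convergence away from the finite bubbling set (plus the uniform bound on $\Phi$) forces the constraint $\phi(u_0)=0$ to persist in the limit, and the orbifold structure via the gluing/implicit-function construction for the operator $\ti{D}_{{u}}$ augmented by the finite-dimensional conditions $E_u,E_u^*$, with charts $\Def({u})_\eps/\Aut({u})$ and stratified-smooth compatibility across strata. The only additions beyond the paper's sketch are minor elaborations (dominated convergence, the finiteness of classes $d$ with bounded pairing against $[\omega_K]$), both of which are sound.
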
 

\begin{proof}  This follows from the symplectic quotient description 
\eqref{quotient} and compactness and regularity properties of the
moduli space of stable maps.
\end{proof} 

\subsection{Zero-semistability}

In this section we give an algebraic interpretation of the moduli
space of zero-area vortices in terms of a stability condition.  Let
$X$ be a smooth projectively-embedded $G$-variety.
The $G$-action on $X$ induces an action
on the stack $\ol{\M}_{0,n}(\P^1 \times X,(1,d))$ by translation, see
Romagny \cite{ro:gr} for discussion of group actions on stacks.

\begin{definition}\label{d:zero-s}  Given a non-zero element $\lambda \in \k$
  and a stable map ${u}: {C} \to {\P^1} \times X$, the {\em associated
    graded} map $\Gr(u)$ is the Gromov limit of maps $\exp(t i
  \lambda) {u}$ as $t \to - \infty$.
 \label{0ssdef} The {\em degree} of $\lambda$ with respect
to ${u}$ is $\mdeg_\lambda({u}) := \lan \Gr({u})_0^* \Phi,
\lambda \ran $, 
where the pairing is taken at any point in the principal component.  A
stable map ${u}: {C} \to {\P^1} \times X$ is {\em $0$-semistable} iff
$\mdeg_\lambda({u}) \leq 0 $ for all $\lambda \in \g$, {\em
  $0$-unstable} iff it is not $0$-semistable, and $0$-stable iff it is
$0$-semistable with strict inequality for each non-zero $\lambda$.  An {\em
  isomorphism} of $0$-semistable maps $u: C \to {\P^1} \times X, u':
C' \to {\P^1} \times X$ is an isomorphism $\psi: C \to C'$ and an element $g
\in G$  such that $gu = u' \circ \psi$.  
\end{definition} 

A stable map is $0$-unstable iff a generic value on the principal
component is $0$-unstable for some one-parameter subgroup, which is a
closed condition.  Hence the $0$-semistable locus is open in
$\ol{\M}_{0,n}({\P^1} \times X)$.  (There is a seemingly unavoidable
conflict between stability terminology here: a stable map may or may
not be $0$-(semi)stable.)  We denote by $\ol{\M}_n^G({\P^1},X)_0$ the
stack of $0$-semistable maps to $X$.  More explicitly
$\ol{\M}_n^G({\P^1},X)_0$ is a substack of the quotient stack
$\ol{\M}_{0,n}({\P^1} \times X)/G$ of $\ol{\M}_{0,n}({\P^1} \times X)$
by $G$.  An object of $\ol{\M}_{0,n}({\P^1} \times X)/G$ over a scheme
$S$ consists a principal $G$-bundle $P \to S$ and a stable map $u: C
\to \P^1 \times P(X)$ over $S$.  Artin charts for
$\ol{\M}_{0,n}({\P^1} \times X)/G$ are induced by charts for
$\ol{\M}_{0,n}({\P^1} \times X)$, making $ \ol{\M}_n^G({\P^1},X)_0
\subset \ol{\M}_{0,n}({\P^1} \times X)/G$ into an Artin stack
cf. \cite{ro:gr}, \cite[Appendix]{jo:rr1}.

\begin{theorem} \label{proper}  
Suppose that every $0$-semistable gauged map is stable.  For any
constant $c > 0$, the union of components $\ol{\M}_n^G({\P^1},X,d)_0$
with homology class $d \in H_2(X,\Z)$ satisfying $\lan d , [\omega]
\ran < c$ is a proper Deligne-Mumford stack with a relative perfect
obstruction theory over $\ol{\frak{M}}_n(\P^1)$. 
\end{theorem} 

To show properness of $\ol{\M}_n^G(\P^1,X)_0$ we use the following
result, similar to Kirwan \cite{ki:coh} and Ness \cite{ne:st}, which
gives an equivalence between the algebro-geometric and
symplecto-geometric definitions.  We say that $\lambda \in \k$ is a
{\em maximally destabilizing} vector iff for any $\xi \in \k$,
$\mdeg_\xi(u) / \Vert \xi \Vert \leq \mdeg_\lambda(u) / \Vert \lambda
\Vert$ with equality iff $\R_{> 0} \xi = \R_{> 0} \lambda$.

\begin{proposition}  \label{typestrat}
Let ${u} : {C} \to {\P^1} \times X$ be a stable map.  Then
\begin{enumerate}
\item the map $u$ is $0$-semistable iff $\ol{Gu} \cap \phi^{-1}(0) \neq \emptyset$;
\item  $Gu \cap \phi^{-1}(0)$ contains at most one $K$-orbit;
\item Any $0$-unstable ${u}$ has a maximally destabilizing vector $\lambda$.
\end{enumerate}
\end{proposition} 

\begin{proof} In Kempf-Ness \cite{ke:le} and Ness \cite{ne:st}  
the case of projective or K\"ahler actions was considered.  The case
  of degenerate polarizations, when the 2-form is only required to be
  non-degenerate on each orbit, is discussed in \cite[Remark
  7.2.5]{quotients}.
\end{proof}  

\begin{remark} 
The theory of the {\em Jordan-H\"older} vector is missing.  That is,
we do not know whether, if $\ol{Gu}$ contains a vortex, whether the
orbit of such a vortex is unique.  
\end{remark} 

\begin{proposition} Suppose that every 
$0$-area vortex has finite automorphism group. Then the coarse moduli
  space of $\ol{\M}_n^G(\P^1,X)_0$ is homeomorphic to the moduli space
  of $0$-area vortices $\ol{M}^K_n(\P^1,X)_0$.
  \end{proposition}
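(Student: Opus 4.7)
The plan is to establish the homeomorphism as a Kempf--Ness-type correspondence, with Theorem \ref{typestrat} supplying the key orbit-structure input. Under the finite-automorphism hypothesis both sides turn out to parametrize the same $G$-orbits of $0$-area vortices; the task is to produce mutually inverse continuous bijections in the quotient Gromov topologies.

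First I would match the underlying sets. By Theorem \ref{typestrat}(a,b), every $0$-semistable orbit $Gu$ contains a $0$-area vortex $v$ which is unique up to the $K$-action. Under the hypothesis $\Aut(v)$ is finite, and the standard dimension argument for reductive actions (an orbit whose closure contains a finite-stabilizer point must equal that point's orbit, since $\ol{Gu}\setminus Gu$ is closed of strictly smaller dimension than $Gu$) forces $v \in Gu$ and $\Aut(u)$ to be finite as well. Hence $0$-semistability coincides with $0$-stability, all $G$-orbits are closed, and the coarse moduli space $|\ol{\M}_n(\P^1,X)_0|$ is the naive $G$-orbit space of the $0$-stable locus. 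Conversely, every $0$-area vortex $v$ is $0$-semistable: for $\lambda \in \k$, the complexified flow $\exp(it\lambda)$ on $X$ has tangent $J\lambda_X$, along which $\lan \Phi,\lambda\ran$ decreases, so the constant value of $\lan \Phi,\lambda\ran$ on the fixed component containing $\Gr_\lambda(v)_0$ is bounded pointwise above by $\lan v^*\Phi,\lambda\ran$ and hence by its average $\lan \phi(v),\lambda\ran/\int_\Sigma\Vol_\Sigma = 0$, giving $\mdeg_\lambda(v)\leq 0$. The assignments $Gu \mapsto Kv$ and $Kv \mapsto Gv$ are therefore well-defined and mutually inverse.

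Second I would verify continuity of both directions. The map $\ol{M}_n(\P^1,X)_0 \to |\ol{\M}_n(\P^1,X)_0|$ is continuous because it factors as the inclusion $\phi^{-1}(0)\hookrightarrow \ol{M}_{0,n}(\P^1\times X,(1,d))^{0\text{-ss}}$ followed by the coarse quotient map, both continuous in the Gromov topology. For the inverse $Gu \mapsto Kv(u)$, I would appeal to the strictly convex Kempf--Ness functional $\Psi_u \colon \k \to \R$ from the proof of Theorem \ref{typestrat}: its unique critical point $\xi(u) \in \k$ satisfies $\exp(i\xi(u))u \in \phi^{-1}(0)$ and realizes $v(u)$ up to $K$. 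Since the first and second derivatives of $\Psi_u$ are integrals over the domain of $u$ depending continuously on $u$ in the Gromov topology, and strict convexity gives uniform control on perturbations of the minimizer, a standard implicit-function argument yields continuity of $u \mapsto \xi(u)$ and hence of the inverse map.

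The main obstacle will be this last continuity statement for $\xi(u)$ as $u$ Gromov-degenerates to a nodal stable map: one must verify that the integrals defining $\Psi_u$ and its derivatives vary continuously across node formation, and that the strict-convexity constants remain uniform across Gromov-compact families. Both follow from the compactness of $\ol{M}_{0,n}(\P^1\times X,(1,d))$ together with nondegeneracy of $\omega$ in the fiber directions, but the uniformity estimates near nodal degenerations are the technical crux.
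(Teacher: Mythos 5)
Your proposal is correct in substance and rests on the same key input as the paper's proof, namely Theorem \ref{typestrat}, but it is considerably more complete and handles the continuity differently. The paper's argument is a single sentence: part (a) of Theorem \ref{typestrat} gives that each semistable orbit meets the set of $0$-area vortices in a single $K$-orbit, and the resulting bijection ``has a continuous inverse'' --- implicitly one then concludes via the compactness and Hausdorffness of $\ol{M}_n(\P^1,X)_0$ from Theorem \ref{compact2}, since a continuous bijection from a compact space onto a Hausdorff one is a homeomorphism. You fill in two points the paper elides: part (b) of Theorem \ref{typestrat} only places a vortex in the orbit \emph{closure}, and your dimension argument (a finite-stabilizer point in $\ol{Gu}\setminus Gu$ would give $\dim G = \dim Gv < \dim Gu \le \dim G$) is exactly what promotes it to the orbit itself under the finite-automorphism hypothesis; and you verify the converse inclusion that every $0$-area vortex is $0$-semistable via monotonicity of $\lan\Phi,\lambda\ran$ along the imaginary flow, which the paper does not mention. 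Where you genuinely diverge is the continuity of the map $Gu\mapsto Kv(u)$: you prove it directly by an implicit-function argument for the Kempf--Ness minimizer $\xi(u)$, whereas the compactness route makes this unnecessary and avoids the uniformity estimates near nodal degenerations that you rightly flag as the technical crux (note that strict convexity of $\Psi_u$ can degenerate when $\lambda_X$ vanishes along the image of the principal component, so the Hessian bound you need is not entirely automatic). One minor caveat: your assertion that $0$-semistability coincides with $0$-stability is not literally forced by finiteness of automorphisms (equality $\mdeg_\lambda(u)=0$ can occur when the principal component maps into a fixed-point set while the bubbles still move), but nothing in your argument actually uses it --- orbit-closedness, which is what you need, already follows from your dimension argument.
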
 

\begin{proof} The assumption on finite automorphism group implies
stable=semistable, since the associated graded of any $0$-semistable
but not $0$-stable has infinite automorphism group.  By part (a) of
\ref{typestrat}, any $0$-stable map $u$ is complex orbit equivalent to
a $0$-area vortex; the resulting map from the coarse moduli space (the
quotient of the semistable locus by the action) has a continuous
inverse given by the inclusion of $\phinv(0)$ into the $0$-stable
locus.
\end{proof}

\begin{proof}[Proof of Theorem \ref{proper}]   
Properness follows from properness of the coarse moduli space in
Theorem \ref{compact2}; the case of non-zero vortex parameter is
discussed in \cite{cross}.  A relative perfect obstruction theory is
given by $Rp_* (e^* T(X/G))^\dual$ as in Behrend \cite{be:gw} and the
canonical map to the cotangent complex, where $p$ is the universal
curve and $e$ the universal morphism, c.f. Gonzalez-Woodward \cite[Theorem 4.23]{cross}.
\end{proof}

\subsection{Zero-area gauged Gromov-Witten invariants}  

Suppose that $X$ is a smooth projectively-embedded $G$-variety.  Given $u: C \to
{\P^1} \times X$ we denote by $u_1: C \to {\P^1}$ the component with
values in ${\P^1}$.

\begin{lemma} There exists a forgetful morphism 
$f:\ol{\M}_n^G({\P^1},X)_0 \to \ol{\M}_n({\P^1})$ which maps
  $(P,C,u,\ul{z})$ to the stable map obtained from $(C,u_1,\ul{z})$ by
  collapsing unstable components.
\end{lemma}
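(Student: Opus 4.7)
The plan is to construct $f$ via the standard Knudsen contraction of unstable components of rooted prestable marked curves, applied relatively over the base scheme.

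On objects: given a family $(C \to S, P, u, \ul{z})$ in $\ol{\M}_n(\Sigma,X)_0(S)$, the $\Sigma$-component $u_1 := \pi \circ u : C \to \Sigma$ (where $\pi : P(X) \to \Sigma$ is the structural projection) makes $(C, u_1, \ul{z})$ into a family of $\Sigma$-rooted prestable $n$-marked curves of base class $[\Sigma]$. Applying the Knudsen stabilization fiberwise and relatively over $S$ contracts each genus zero component on which $u_1$ is constant and which carries fewer than three special points (marked or nodal), yielding a family of stable $\Sigma$-rooted marked curves, which is an object of $\ol{\M}_n(\Sigma)(S)$. This defines $f$ on objects.

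On morphisms: a morphism in $\ol{\M}_n(\Sigma,X)_0(S)$ between two such objects comprises a bundle isomorphism $\phi : P \to P'$, an isomorphism $\psi : C \to C'$ of rooted marked curves, and compatibility data with the stable maps. The restriction of $\psi$ to the underlying rooted prestable marked curves $(C, u_1, \ul{z})$ and $(C', u_1', \ul{z}')$ is an isomorphism, and by the universal property of Knudsen stabilization it induces a unique isomorphism of the stabilized families. A pure gauge automorphism (one fixing $(C, u_1, \ul{z})$ but acting nontrivially on $P$) descends to the identity, so the assignment respects composition and identities. Compatibility with arbitrary base change follows from the corresponding property of the Knudsen construction.

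The main obstacle is that the locus of components to be contracted is not locally constant over $S$ in general: the combinatorial type of $(C, u_1, \ul{z})$ may jump in families, and the rational components on which $u_1$ is constant but $u$ carries nontrivial class into $X$ must be contracted precisely when they are unstable as markings on the rooted structure. This is exactly the scenario handled by the relative Knudsen-Behrend-Manin contraction procedure (compare the construction of the forgetful map in the proof of Theorem \ref{proper} and the stabilization in Behrend \cite{be:gw}); once that construction is invoked, verifying that the assignment above defines a morphism of Artin stacks reduces to a formal check on fiber products and 2-isomorphisms.
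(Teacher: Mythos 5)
Your proposal is correct and follows essentially the same route as the paper: both rest on the standard Behrend--Manin stabilization morphism $\ol{\M}_{0,n}(\Sigma\times X)\to\ol{\M}_{0,n}(\Sigma)$ and the observation that it is insensitive to the $G$-action (equivalently, your remark that gauge automorphisms descend to the identity), so that it factors through the quotient on the $0$-semistable locus. The paper simply states this as $G$-invariance plus descent, while you unfold the same content into an explicit check on objects, morphisms, and base change.
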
 

\begin{proof}  The existence of a morphism $\ol{\M}_{0,n}({\P^1} \times X)
\to \ol{\M}_{0,n}({\P^1})$ 
collapsing unstable components follows from
the functoriality properties of the moduli space of stable maps given
in Behrend-Manin \cite{bm:gw}.  The restriction to the $0$-semistable
locus is invariant and so factors through the quotient
$\ol{\M}_n^G({\P^1}, X)_0$.
\end{proof}

Let $\ev: \ol{\M}_n^G({\P^1}, X)_0 \to (X/G)^n$ denote the evaluation
map at the marked points.  Let $\ev^*$ denote the induced pull-back in
rational cohomology of coarse moduli spaces $ \ev^* :  H_G(X,\Q)^n \to
H(\ol{M}^G_n({\P^1}, X)_0,\Q)  .$ 

\begin{definition} Suppose that every semistable zero-area 
gauged map of class $d \in H_2(X,\Z)$ has finite automorphism group.
The {\em zero-area gauged Gromov-Witten invariant} for $\alpha \in
H_G(X,\Q)^n, \beta \in H(\ol{M}_n({\P^1}),\Q), d \in
H_2^G(X,\Z)$ is
\begin{equation} \label{vorinvinft} \lan \alpha ; \beta \ran_{d,0}
 = \int_{[\ol{\M}_n^G({\P^1},X,d)_0]} \ev^* \alpha \cup f^* \beta \in \Q
 .\end{equation}
\end{definition}

From the symplectic point of view, the invariants are defined without
virtual fundamental cycles if every vortex is regular, by integration
using the usual orbifold fundamental class.  At least in principle,
the zero-area gauged Gromov-Witten invariants may be computed from the
usual equivariant Gromov-Witten invariants using localization theorems
in \cite{je:lo1}, \cite{wo:norm}.

The splitting axiom for the zero-area gauged Gromov-Witten invariants
is the same as that for finite vortex parameter discussed in
\cite{cross}, that is, they form a cohomological trace on the
cohomological field theory defined by the equivariant Gromov-Witten
invariants introduced by Givental in \cite{gi:eq}.  We will not prove
the splitting axiom here; it follows from the equality with the gauged
Gromov-Witten invariants for small area proved later.

\section{The zero-area limit}

In this section we give both an algebraic and symplectic study 
of the moduli spaces in the zero-area limit.

\subsection{Small-area limit of Mundet-stability}

\begin{theorem} \label{projsame} Let $X$ be a smooth
  projectively-embedded $G$-variety.  There exists a $\rho_0 > 0$ such
  that for $\rho < \rho_0$, any gauged map $(P,C,{u})$ of genus zero
  is $\rho$-semistable iff $P$ is trivializable and (after identifying
  $P \to \P^1 \times G$ so that $P(X) \cong \P^1 \times X$) the map
  $u$ is $0$-semistable.
\end{theorem} 

\begin{proof}
For the proof we use the definition of degree \eqref{eq:deg-mm} using
the moment map. 
Let $\rho_0 >0$ be such that
$ \sup_{x \in X} \rho_0 \Vert \Phi(x) \Vert \int_{\P^1} \Vol_{\P^1} 
< 1 .$
Since the first Chern number of any line bundle is integral, a pair
$(\sigma,\lambda)$ in \eqref{degree} violates semistability for $\rho
< \rho_0$ iff
$$ \mdeg_{\sigma,\lambda}(P,u) = \int_{\P^1} c_1( p_* \sigma^* P
\times_L \C_\lambda) + \rho \lan P(\Phi) \circ \Gr({u})_0, \lambda
\ran \Vol_{\P^1} > 0 $$
iff
$ \int_\sigma c_1(p_* \sigma^* P \times_L \C_\lambda) \ge 0 $
and if equality holds then
\begin{equation} \label{0ss}
 \lan P(\Phi) \circ \Gr({u})_0, \lambda \ran  > 0
 .\end{equation}
It follows that $(P,C,{u})$ is $\rho$-semistable iff $P$ is
semistable, hence trivial (since ${\P^1}$ has genus zero), and ${u}$
satisfies the $0$-semistability condition of Definition
\ref{d:zero-s}.
\end{proof}  

\begin{corollary}  There exists
a $\rho_0 > 0$ such that for $\rho < \rho_0$, there is an equivalence
from $\ol{\M}_n^G({\P^1},X)_0$ to $\ol{\M}_n^G({\P^1},X)_\rho$ of
Deligne-Mumford stacks equipped with relative perfect obstruction theories.
\end{corollary} 

\begin{proof}   Theorem \ref{projsame} gives an equivalence from $\ol{\M}_n^G({\P^1},X)_0$ to
$\ol{\M}^G_n({\P^1},X)_\rho$ for $\rho$ as in the Theorem.  The
  relative perfect obstruction theories are equivalent, by
  definition.
\end{proof}  

Theorem \ref{equal} of the Introduction follows. 

\subsection{Small-area limit of vortices}

In this section we consider the small-area limit in the case that the
target is a Hamiltonian $K$-manifold, in which we lack a holomorphic
description of the moduli space.   
First we prove 
a result in the case without bubbling:

\begin{theorem} \label{compactnobubbles}
Suppose that $(A_\nu,u_\nu)$ is a sequence of $\rho_\nu$ vortices of
constant homology class $d \in H_2^K(X)$, with $\rho_\nu \to 0$.  If $
c_\nu = \sup | d_{A_\nu} u_\nu |$ is bounded, then after passing to a
subsequence, there exists a sequence of gauge transformations $k_\nu
\in \K(P)$ and a zero-area-vortex $u_\infty$ such that $k_\nu A_\nu $
converges to a trivial connection $A_\infty$ and (using the
trivialization induced by $A_\infty$) $k_\nu u_\nu \to u_\infty$
uniformly in all derivatives.
\end{theorem}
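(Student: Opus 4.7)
The plan is to combine standard Uhlenbeck-type gauge fixing with a subleading analysis of the vortex equation. The leading-order part of the vortex equation $F_{A_\nu} = -\rho_\nu \Vol_\Sigma u_\nu^* P(\Phi)$ together with compactness of $X$ (so that $|\Phi|$ is uniformly bounded) immediately gives $\|F_{A_\nu}\|_\infty = O(\rho_\nu) \to 0$. Since the topological type of $P$ is determined by the fixed homology class $d \in H_2^K(X)$, and since the characteristic integrals of $F_{A_\nu}$ (e.g.\ those computing $c_1$ of line bundles associated to characters of $K$) are topological invariants that must simultaneously be $O(\rho_\nu)$, these integrals vanish identically. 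In genus zero this forces $P$ to be a trivial bundle.

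Now fix a global trivialization of $P$ and identify connections with $\k$-valued one-forms. Standard Uhlenbeck gauge fixing (applicable because $\|F_{A_\nu}\|_\infty$ is small) produces gauge transformations $k_\nu \in \K(P)$ such that $A_\nu' := k_\nu A_\nu$ is in Coulomb gauge with respect to the trivial connection and $\|A_\nu'\|_{1,p}$ is below the threshold of Lemma \ref{genzero}. That lemma then gives
\[ \|A_\nu'\|_{1,p} \leq c\|F_{A_\nu'}\|_{0,p} = O(\rho_\nu), \]
so $A_\nu' \to 0$ in $W^{1,p}$. Elliptic bootstrapping using $F_{A_\nu'} = dA_\nu' + A_\nu' \wedge A_\nu'$, which expresses higher derivatives of $A_\nu'$ in terms of $F_{A_\nu'}$ and lower-order data, upgrades this to smooth convergence $A_\nu' \to 0$.

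For the section, set $u_\nu' := k_\nu u_\nu$. By gauge invariance and the boundedness hypothesis on $c_\nu = \sup|d_{A_\nu} u_\nu|$, the quantities $|d_{A_\nu'} u_\nu'|$ are uniformly bounded, and since $A_\nu' \to 0$ the ordinary derivatives $|du_\nu'|$ are bounded too. Arzel\`a--Ascoli produces a $C^0$-limit $u_\nu' \to u_0$ along a subsequence. The Cauchy--Riemann condition $\olp_{A_\nu'} u_\nu' = 0$ passes to the limit to give $\olp u_0 = 0$, and standard elliptic bootstrapping for $J$-holomorphic maps promotes convergence to $C^\infty$ on all of $\Sigma$ (no bubbling occurs here because $\sup|du_\nu'|$ is a priori bounded).

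The main obstacle, and the one genuinely new point, is showing that the limit satisfies the zero-area condition $\phi(u_0) = \int_\Sigma u_0^* \Phi \Vol_\Sigma = 0$; this does not follow from any leading-order statement and requires looking at the subleading term in the curvature. Integrating the vortex equation over $\Sigma$ in the fixed trivialization, using $F_{A_\nu'} = dA_\nu' + A_\nu' \wedge A_\nu'$ and Stokes' theorem (so that $\int_\Sigma dA_\nu' = 0$), yields
\[ -\rho_\nu \int_\Sigma u_\nu'^* \Phi \, \Vol_\Sigma \;=\; \int_\Sigma F_{A_\nu'} \;=\; \int_\Sigma A_\nu' \wedge A_\nu' \;=\; O(\rho_\nu^2), \]
the last estimate being the $L^2$-Sobolev bound on the quadratic term applied to the Coulomb estimate $\|A_\nu'\|_{1,p} = O(\rho_\nu)$. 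Dividing by $\rho_\nu$ and letting $\nu \to \infty$ gives $\int_\Sigma u_0^* \Phi \, \Vol_\Sigma = 0$, so $u_0$ is indeed a zero-area vortex.
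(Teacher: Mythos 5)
Your proof follows essentially the same route as the paper's: small curvature from the vortex equation, Uhlenbeck gauge fixing and the Coulomb-gauge estimate of Lemma \ref{genzero} to get $\Vert A_\nu'\Vert_{1,p}=O(\rho_\nu)$, and then the integrated vortex equation with Stokes' theorem to isolate the subleading quadratic term $\int_\Sigma A_\nu'\wedge A_\nu'=O(\rho_\nu^2)$ and conclude $\phi(u_0)=0$ after dividing by $\rho_\nu$ (indeed your bookkeeping here is cleaner than the paper's, which has an apparent typo in the power of $\rho_\nu$). The one imprecision is your justification that $P$ is trivial: vanishing of the characteristic integrals does not exclude bundles classified by torsion in $\pi_1(K)$, so one should instead argue, as the paper does, that the Uhlenbeck limit is a flat connection and that flat bundles over the simply connected $\P^1$ are trivial.
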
 

\noindent The proof relies on the following well-known lemma,
cf. \cite[Lemma 2.3.10]{do:fo}, which controls the norm of the connection
linearly in terms of the norm of the curvature in genus zero:

\begin{lemma} \label{genzero}  Let $p \ge 1$.   
There exist constants $\delta > 0, c>0$ such that if $A$ is a
connection on the trivial bundle over ${\P^1}$ in Coulomb gauge with
respect to the trivial connection, that is, $\d^* A = 0$, then
$$\Vert A \Vert_{1,p} < \delta \implies \Vert A \Vert_{1,p} \leq c
\Vert F_A \Vert_{0,p} .$$
\end{lemma}

\begin{proof}  Using the elliptic estimate for $\d + \d^*$ and 
$H^1({\P^1})= 0$ we obtain $ \Vert A \Vert_{1,p} \leq c_1 \Vert \d A
\Vert_{0,p}$. The Sobolev multiplication estimate for $ [A,A]/2$
gives 
$$ \Vert \d A \Vert_{0,p} \leq c_2 (\Vert F_A \Vert_{0,p} + \Vert
A \Vert_{1,p}^2) ,$$
and thus for some $\delta,c>0$ we have $ \Vert A
\Vert_{1,p} \leq c \Vert F_A \Vert_{0,p}$ if $\Vert A \Vert_{1,p} <
\delta$.
\end{proof} 

\begin{proof}[Proof of Theorem \ref{compactnobubbles}]  Since $X$ is
  compact, the curvature of $A_\nu$ 
goes to zero as $\nu \to 0$ in $0,p$ norm,
$$ \Vert F_{A_\nu} \Vert^p_{0,p} \leq \rho_\nu \sup_{x \in X} |
\Phi(x) |^p \int_{\P^1} \Vol_{\P^1} \to 0 $$
using the vortex equation in Definition \ref{vdef}.  By weak Uhlenbeck
compactness, see e.g. \cite{we:ule}, after gauge transformation and
passing to a subsequence $A_\nu$ converges weakly in $W^{1,p}$ to a
connection $A_\infty$ for some $p > 2$.  Then $F_{A_\nu}$ converges
weakly in $L^p$ to $F_{A_\infty}$.  Since $F_{A_\nu}$ converges to $0$
in $L^p$ the convergence is strong and $F_{A_\infty}$ vanishes.  After
$W^{2,p}$ gauge transformation, we may assume that $A_\infty$ is
smooth.  Parallel transport by $A_\infty$ defines a trivialization of
$P$, since ${\P^1}$ is simply-connected.  Then $u_\nu$ defines a map
to $X$ and $A_\nu$ resp. $F_{A_\nu}$ defines a one-form resp.
two-form with values in the Lie algebra, for which we use the same
notation.  After gauge transformation, we may assume $\d^* A_\nu = 0$.
By Lemma \ref{genzero} $ \Vert A_\nu \Vert_{1,p}< c \rho_\nu, $ in
particular, $A_\nu$ converges to $0$ in $C^0$ norm.  Integrating the
vortex equation \eqref{veq} gives
$$\int_{\P^1} u_\nu^* \Phi \Vol_{\P^1} = 
\rho_\nu^{-1} \int_{\P^1}
 \hh [A_\nu,A_\nu] \Vol_{\P^1}  \to
0 .$$
Since $\sup |d_{A_\nu} u_\nu|$ is bounded, $u_\nu$ converges in $C^0$
to a section $u_\infty$ with
$$\int_{\P^1}
u_\infty^* \Phi \Vol_{\P^1} =  \lim_{\nu \to \infty} \int_{\P^1} u_\nu^* \Phi \Vol_{\P^1} = 0 .$$  
One obtains uniform convergence for $A_\nu,u_\nu$ in all derivatives
and holomorphicity of $u_\infty$ by elliptic bootstrapping as in
\cite[Section 3]{ci:symvortex}.
\end{proof}  

\noindent More generally for polystable vortices we have a similar
result Theorem \ref{limthm}. 

\begin{proof}[Proof of Theorem \ref{limthm}]     
As in the proof of Theorem \ref{compactnobubbles}, after gauge
transformation and passing to a subsequence we may assume that $A_\nu$
converges in $C^0$ to a flat connection $A_\infty$, which defines a
trivialization of $P$.  On any set on which the first derivative is
bounded, the principal component of ${u}_\nu$ converges to a
holomorphic map to $X$ and $A_\nu$ converges to $A_\infty$ uniformly
in all derivatives as before.  Gromov convergence of $u_\nu$ to a
stable map $u_\infty$ follows by Ott's arguments in \cite{ott:remov},
using the fact that $\rho_\nu \to 0$ implies that the bounds
\cite[(2.25)]{ott:remov} are satisfied.  In particular, if $u_{\nu,0}$
resp $u_{\infty,0}$ denotes the principal component then $ u_{\nu,0}$
converges to $u_{\infty,0}$ uniformly in all derivatives on compact
subsets of the complement of the (finite) bubbling set $Z$.
Integrating the vortex equation gives
\begin{eqnarray*}
 \Vert 
\int_{{\P^1}} u_{\infty,0}^* \Phi\Vert &=& 
\lim_{\eps \to
   0} \lim_{\nu \to \infty} 
\Vert \int_{\P^1 - B_\eps(Z)} u_{\nu,0}^*
   \Phi \Vert 
\\ &\leq&  \lim_{\eps \to 0} \lim_{\nu \to \infty}
 \Vert  \int_{\P^1} u_{\nu,0}^* \Phi\Vert + 
\Vol(B_\eps(Z)) \sup( \Vert \Phi \Vert) 
\\ &\leq & \lim_{\nu \to \infty} c \rho_\nu 
= 0 \end{eqnarray*} 
where $B_\eps(Z)$ denotes the union of $\eps$-balls around the points
in $Z$.
\end{proof} 

\section{Application to abelianization} 

In this section we prove the abelianization Theorem \ref{qmartinthm}
for gauged Gromov-Witten invariants of zero area, hence for gauged
Gromov-Witten invariants of small area.  We first give a topological
proof, assuming that $X$ is convex (that is, every pseudoholomorphic
map is regular) in which case the proof is essentially the same as
Martin \cite{mar:sy}.  Suppose that $K$ is a compact connected Lie
group.  Let $T \subset K$ denote a maximal torus and $W = N(T)/T$ its
Weyl group.  Consider the map $ H^T(X,\Z) \to H^K(X,\Z) $ induced by
the inclusion $T \to K$.  We write $d_T \mapsto d_K$ if $d_T \in
H_2^T(X,\Z)$ maps to $d_K \in H_2^K(X,\Z)$ under the isomorphism
$H_2^T(X,\Z) \to H_2^K(X,\Z)$ induced by $X \times_T ET \to X \times_K
EK$.  We denote by $\lan \alpha, \beta \ran_{K,d_K,\rho}$ resp.  $\lan
\alpha, \beta \ran_{T,d_T,\rho}^{twist}$ the untwisted
resp. $\g/\t$-twisted vortex invariants for $K$ resp. $T$.

\begin{proof}[Proof of \eqref{qmartininfty} in the convex case]
Suppose that $d_K \in H_2^K(X,\Z)$ is the image of $d \in H_2(X,\Z)$,
and $d_T$ is the image of $d$ in $H_2^T(X,\Z)$.  Let $\phinv_T(0)$
resp. $\phinv_K(0)$ denote the subspace of stable maps of homology
class $d$ with average $T$ resp. $K$ moment map equal to zero.  The
projection
$ \phinv_K(0)/T \to \phinv_K(0)/K $
has fiber $K/T$ with Euler characteristic
$\chi(K/T) = \chi((K/T)^T) = \# W $. The inclusion 
$ \phinv_K(0)/T \to \phinv_T(0)/T $
has normal bundle 
$ \phinv_K(0) \times_T (\k/\t)^\dual $
with transverse section induced by $\phi_K$.  Since the connections
are trivial in this case, the index of $(\k/\t)^\dual$ is $(\k/\t)^\dual$
itself.  Hence
\begin{eqnarray*} 
\lan \alpha, \beta \ran_{K,d_K,0} 
&=& \int_{\phinv_K(0)/K} \ev^* \alpha \cup f^* \beta \\
&=& (\# W)^{-1} 
\int_{\phinv_K(0)/T} 
\ev^* \alpha \cup f^* \beta \cup \Eul( \Ind(\k/\t))
\\ 
&=& (\# W)^{-1} \int_{\phinv_T(0)/T} \ev^* \alpha
\cup f^* \beta \cup \Eul( \Ind((\k/\t) \oplus (\k/\t)^\dual)) \\ 
&=& (\# W)^{-1} \lan \alpha, \beta \ran_{T,d_T,0}^{twist} .\end{eqnarray*}
\end{proof} 

For the case when the target an arbitrary smooth projectively-embedded
$G$-variety $X$ we derive the abelianization formula
\eqref{qmartininfty} from a version for sheaf cohomology.  A similar
strategy is used for various virtual integration identities in Joshua
\cite{jo:rr3}.  Recall from Lee \cite{lee:qk1} that the {\em virtual
  structure sheaf} of a Deligne-Mumford stack $\S$ equipped with a
perfect obstruction theory $\phi: E \to L_\S$ is defined as follows.
Let $\mE_\S = h^1/h^0(E^\dual)$ denote the corresponding cone stack
and $C_\S$ the intrinsic normal cone of Behrend-Fantechi \cite{bf:in}.
Define
$$ \O_{\S}^{\vir}
 := \O_{\S} \bigotimes^L_{0^{-1}_{\mE_\S}(\mO_{\mE_\S}) }
0^{-1}_{\mE_\S}(\mO_{C_\S}) $$
where $0_{\mE_\S}: \S \to \mE_\S$ is the inclusion of the vertex.  The
{\em virtual push-forward} of an object $F$ of the derived category is
$$ Rf_*^{\vir} F := Rf_* \left( F 
\bigotimes^L_{\mO_{\S_0}}
 \mO_{\S_0}^{\vir} \right) .$$
If $\S_1$ is a point then we denote by $\chi^{\vir}(F)$ the Euler
characteristic of $Rf_*^{\vir}(F)$.

We have the following equivariant version of abelianization which is
fairly trivial.  Denote by $\ul{\Lambda}(\g/\t)$ the vector bundle
with fiber the exterior algebra $\Lambda(\g/\t)$ on $\g/\t$.

\begin{proposition} 
\label{trivabel}
 For any $G$-equivariant vector bundle $E$ on $\ol{\M}_{0,n}(\P^1
 \times X,(1,d))$,
$
  \chi^{\vir}(E)^G 
=
( \# W)^{-1} \chi^{\vir}(E \otimes
 \ul{\Lambda}(\g/\t)))^T 
.$
\end{proposition}  

\begin{proof}  For any finite-dimensional representation 
$V$ of $G$ the dimension $ \dim(V^G)$ of the space of invariants is $
  ( \# W)^{-1} \dim (V \otimes \Lambda(\g/\t))^T $ by the Weyl
  character formula.  Applying this to the Euler characteristic on the
  left gives the result.
\end{proof} 

To derive abelianization from this fact we need the following
properties of local cohomology, the second of which uses a bit of
derived algebraic geometry; they are versions of the Thom and Gysin
isomorphisms in K-theory.  Suppose that $f: \S_0 \to \S_1$ is a
locally closed embedding.  We denote by $\chi^{\vir}_{\S_0}(\S_1,F)$
the Euler characteristic of the local cohomology sheaves
$$ \cH_{\S_0}^{\vir}(\S_1,F) := \cH_{\S_0}(\S_1,F \otimes^L
\mO_{\S_0}^{\vir} ) .$$
By Sch\"urg \cite{schurg:der}, any perfect
obstruction theory arises from a quasi-smooth derived structure.  A
derived structure on the moduli stack of stable maps, which is the
only case we use here, is discussed in \cite{toen:der}.

\begin{proposition}   Suppose that $F
  \to \S_0$ is a vector bundle.
\label{axioms}
\begin{enumerate}
\item Suppose that $f: \S_0 \to \S_1$ is a vector
  bundle.  Then $Rf_*^{\vir} F$ is isomorphic to $\iota^* F \otimes
  \Sym(\S_1^\dual)$ where $\iota: \S_1 \to \S_0$ is the zero section.
\item Let $f: \S_0 \to \S_1$ be a derived regular embedding with
  normal bundle $\cN$.  Then the local cohomology sheaf
  $\cH_{\S_0}^{\vir}(\S_1,F)$ admits a filtration by order of
  vanishing at $\S_0$ with associated graded complex $(\Sym(\cN)
  \otimes \det(\cN) \otimes f^* F \otimes^L \mO_{\S_1}^{\vir} )[
    \rank(\cN)]$.
\end{enumerate} 
\end{proposition} 

The Gysin isomorphism is (for schemes) Grothendieck \cite[p.14]{sga2}.
Local cohomology for derived algebraic geometry is developed in Lurie
\cite{lurie:dag12}, although we are certainly not using the full
theory here.

Local cohomology groups appear in the spectral sequence for the
Kirwan-Ness stratification for the action of $G$ on $\ol{\M} :=
\ol{\M}_{0,n}({\P^1} \times X, (1,d))$.  Recall first the Kirwan-Ness
stratification for the action of $G$ on $X$: Any non-zero $\lambda \in
\k$ defines a parabolic subgroup $R_\lambda$, the set of elements $r
\in R$ such that $\Ad(\exp(z i \lambda))r$ has a limit as $z \to
-\infty$.  The centralizer $G_\lambda$ of $\lambda$ is a Levi subgroup
and the map $r \to \lim_{z \to 0} \Ad(\exp(z \lambda))r$ is the
quotient by a maximal unipotent, and so a group homomorphism.  Let $X
= \bigcup_\lambda X_\lambda$ denote the Kirwan-Ness stratification of
$X$; here $\lambda$ ranges over equivalence classes of one-parameter
subgroups.  For each $\lambda$, let $Z_\lambda$ denote the
corresponding fixed point set of the one-parameter subgroup generated
by $\lambda$.  Let $Z_\lambda^{\ss}$ the semistable locus for the
action of $G_\lambda/\C^*_\lambda$ on $Z_\lambda$, $Y_\lambda$ the
subset of $X$ flowing to $Z_\lambda$ under $\exp(z \lambda)$, and
$Y_\lambda^{\ss}$ the inverse image of $Z_\lambda^{\ss}$.  Then (see
\cite{ki:coh}, \cite{ne:st})
$ X_\lambda = G \times_{R_\lambda} Y_\lambda^{\ss} .$

Strictly speaking, there is no Kirwan-Ness stratification for the
$G$-action on $\ol{\M}$ since there is no polarization.  However, the
stack $\ol{\M}$ has a similar stratification given as follows.

\begin{definition}   
Let $\M^\lambda $ denote the substack of $\ol{\M}$ consisting of
morphisms $u$ whose maximally destabilizing vector is conjugate to
$\lambda$.  Let ${\cZ}^\lambda$ denote the substack of maps that are
fixed by $\C^*_\lambda$ whose principal component takes values in
$Z_\lambda$; $\cZ^{\lambda,\ss}$ the locus in ${\cZ}^\lambda$ of
$0$-semistable maps for the residual action of
$G_\lambda/\C^*_\lambda$; ${\Y}^\lambda$ (resp. $\Y^{\lambda,\ss}$)
denote the substack of maps ${u}$ that flow under $\exp(z \lambda)$ to
${\cZ}^\lambda$ (resp. $\cZ^{\lambda,\ss}$).
\end{definition}

\begin{proposition} 
Each $\M^\lambda$ is a locally closed substack of $\ol{\M}$ equal to
$G \times_{G_\lambda} \Y^{\lambda,\ss} $.  The closure of $\M^\lambda$
is contained in the union of $\M^\nu$ with $\Vert \nu \Vert \geq \Vert
\lambda \Vert$.  The embedding $\M^\lambda \to \ol{\M}$ is the
truncation of a regular embedding of quasismooth derived stacks.
\end{proposition} 

\begin{proof}   That $\M^\lambda$ is locally closed
follows from the fact that the Hilbert-Mumford weight for $u$ with
respect to $\lambda$ is given by the weight for a generic value $u(z),
z \in C_0$.  The equality $\M^\lambda = G \times_{G_\lambda}
\Y^{\lambda,\ss} $ would follow from Kirwan \cite{ki:coh} applied to
the coarse moduli space $\ol{M}$ of $\ol{\M}$, except that $\ol{M}$ is
singular and has possibly degenerate two-form, where smooth.  However,
each $G$ orbit in $\ol{M}$ is smooth and has a non-degenerate K\"ahler
form, and \cite[Remark 7.2.5]{quotients} shows that the theory of
maximally destabilizing vectors extends to this degenerate case.  That
the closure of $\M^\lambda$ is contained in the union of $\M^\nu$ with
$\Vert \nu \Vert \leq \Vert \lambda \Vert$ follows from the
description of the norm $\Vert \lambda \Vert$ of the maximally
destabilizing vector $\lambda$ for $u$ as the infimum of $\Vert \phi
\Vert$ over the orbit $Gu$, see \cite[Lemma 5.4.3]{quotients}.
Regularity of $\M^\lambda \to \ol{\M}$ follows from embedding $X \to
\P^N$ which induces an embedding of $ \ol{\M}_{0,n}(\P^1 \times X)$
into the smooth stack $\ol{\M}_{0,n}(\P^1 \times \P^N)$, for which the
strata are honestly smooth.
\end{proof} 

The following is a version of non-abelian localization
for the action of $G$ on the stack $\ol{\M}$, compare \cite{te:qu}.

\begin{theorem} 
\label{nonab}  For $G$-equivariant vector bundle $V$ on $\ol{\M}$ 
we have 
\begin{multline} \chi^{\vir}(\ol{\M},V)^G = \sum_\lambda (-1)^{\codim({\M}^\lambda)}
\chi^{\vir}( {\cZ}^{\lambda,\ss}, V \otimes \\ \Sym(
T_{{\cZ}^{\lambda}} ({\cY}^\lambda)^\dual \oplus T_{{\M}^\lambda}
\ol{\M}) \otimes \det (T_{{\M}^\lambda} \ol{\M}) \otimes
\ul{\Lambda}(\g/\r_\lambda))^{G_\lambda}\end{multline}
where we have omitted
restrictions to simplify notation.
\end{theorem} 

\begin{proof} As in Teleman
  \cite{te:qu}.  By the spectral sequence associated to the stratification  
we have 
$$ \chi^{\vir}(\ol{\M},V)^G = \sum_\lambda
(-1)^{\codim(\M^\lambda)} 
\chi^{\vir}_{\M^\lambda} (\ol{\M},
V )^G .$$
Using the Gysin isomorphism \ref{axioms} (b),
$$ \chi^{\vir}_{\M^\lambda} ( \ol{\M},V )^G = 
(-1)^{\codim(\M^\lambda)}  \chi^{\vir}(\M^\lambda, V
\otimes \Sym( T_{\M^\lambda} (\ol{\M})) \otimes
\det (T_{\M^\lambda} \ol{\M}))^G .$$
Now the Thom isomorphism \ref{axioms} (a)
\footnote{Erratum added after publication: The proof of Theorem
  \ref{nonab} is not quite correct because for the derived structures
  on the strata as we have defined them the assumptions in
  \ref{axioms} (b) may not hold.  A more general non-abelian
  localization theorem, which applies to the case at hand by
  \cite[Appendix]{gr:loc}, has been proved by Daniel Halpern-Leistner
  \cite[(5)]{hl:qs}.}  and isomorphism $\M^\lambda \cong G
\times_{R_\lambda} \Y^\lambda$ imply
\begin{eqnarray*}  
&& \chi^{\vir}(\M^\lambda, V \otimes \Sym( T_{\M^\lambda}
  \ol{\M}) \otimes \det (T_{\M^\lambda} \ol{\M}))^G
  \\ &=& \chi^{\vir}(\Y^{\lambda,\ss}, V \otimes \Sym(
  T_{\M^\lambda} \ol{\M}) \otimes \det (T_{\M^\lambda}
  \ol{\M}) \otimes \ul{\Lambda}(\g/\r_\lambda))^{G_\lambda} \\ &=&
  \chi^{\vir}( \cZ^{\lambda,\ss}, V \otimes \Sym( T_{\M^\lambda}
  \ol{\M} \oplus T_{\cZ^\lambda}^\dual \Y^\lambda) \otimes
  \det (T_{\M^\lambda} \ol{\M}) \otimes
  \ul{\Lambda}(\g/\r_\lambda))^{G_\lambda} .
\end{eqnarray*}
\end{proof}  

We apply the formula in a situation where the only
contribution comes from the open stratum.  Let $\mO_X(1)$ be the
hyperplane bundle for $X$.  We construct a suitable determinant line
bundle on $\ol{\M}_n^G(\P^1,X)_0$ as follows.  Let $p:
\ol{\U}^G_n(\P^1,X)_0 \to \ol{\M}_n^G(\P^1,X)_0$ denote the universal
curve, and $q: \ol{\U}^G_n(\P^1,X)_0 \to \P^1$ the morphism given by
$(C,\pi:P \to \Sigma,u: C \to P(X),\ul{z},w \in C) \mapsto \pi(u(w))$.
Define
$$ D(k) = 
\det(Rp_* e^*\mO_X(1) \otimes 
q^* \mO_{\P^1}(k)) ;$$
the effect of the twist by $\mO_{\P^1}(k)$ is to make the contribution
from the principal component dominant.

\begin{lemma} \label{big} For any $\lambda$ and 
class $d \in H_2(X)$, there exists a constant $c$ such that the weight
of $\C^*_\lambda$ on $D(k) | {\cZ^\lambda}$ is at least $ k
(\lambda,\lambda) - c$.
\end{lemma}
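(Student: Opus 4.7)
The plan is to restrict to a point $(C,u)$ of $\cZ^\lambda$ and compute the $\C^*_\lambda$-weight on the fiber of $D(k)$ by decomposing $C = C_0 \cup \bigcup_j C_j$ into its principal component (mapping isomorphically onto $\P^1$ under the rooting map) and its bubbles. Since $u$ is fixed by $\C^*_\lambda$, the principal component factors as $u|_{C_0} : \P^1 \to Z_\lambda^\nu \subset X$ for a single connected fixed-point component, on which $\C^*_\lambda$ acts on $L$ by a single weight $w_\nu$, while each bubble $u|_{C_j}$ has image in a $\C^*_\lambda$-orbit closure. The strategy is to show that the principal component contributes weight $(\lambda,\lambda)\cdot k$ plus an error bounded in $d$, while the bubbles and nodes contribute a bounded error.

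First I would apply the normalization sequence for $C$ to obtain an exact triangle of $\C^*_\lambda$-equivariant complexes whose determinant identifies the $\C^*_\lambda$-weight of $D(k)|_{(C,u)}$ with the sum over irreducible components $C_\alpha$ of the weights of $\det R\Gamma(C_\alpha,(e^*L\otimes \pi^*\mO_{\P^1}(k))|_{C_\alpha})$, corrected by a contribution from the nodes. The node correction consists of fibers $L|_{u(w^\pm)}$ at points fixed by $\C^*_\lambda$, whose weights and number are bounded in terms of the combinatorial type and $d$.

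Next I would treat the principal component. Here $(e^*L)|_{C_0}$ is a line bundle on $\P^1$ of degree $d_0 \leq \langle d, c_1(L)\rangle$ on which $\C^*_\lambda$ acts with the single weight $w_\nu$, while $(\pi^*\mO_{\P^1}(k))|_{C_0} = \mO_{\P^1}(k)$ carries no $\C^*_\lambda$-action. For $k$ large enough that $H^1$ vanishes, $H^0(\P^1,(e^*L\otimes\pi^*\mO_{\P^1}(k))|_{C_0})$ has dimension $d_0+k+1$ with every element of weight $w_\nu$, so its determinant has weight $w_\nu(k+d_0+1)$. Membership of $u$ in $\cZ^\lambda$ combined with Theorem \ref{typestrat}(c) gives $(\phi(u|_{C_0}),\lambda)=(\lambda,\lambda)$, and the standard identification of the linearization weight with the moment map value on a fixed component (together with the normalization of $\Vol_{\P^1}$ used in \eqref{degree}) forces $w_\nu = (\lambda,\lambda)$. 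This yields a principal contribution of $(\lambda,\lambda)\cdot k$ plus a constant depending only on $d$.

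Finally, for each bubble the image lies in a $\C^*_\lambda$-orbit closure, so $(e^*L)|_{C_j}$ is $\C^*_\lambda$-equivariant with weights bounded by a constant depending only on $\lambda$ and the finite set of weights of $L$ over $X$. Since $\sum_j \deg((e^*L)|_{C_j}) \leq \langle d, c_1(L)\rangle$, the total bubble contribution to the weight of $D(k)|_{(C,u)}$ is bounded in absolute value by a constant $c = c(d,\lambda)$; assembling the three estimates gives the stated bound. The main obstacle will be the careful identification $w_\nu = (\lambda,\lambda)$, which requires reconciling the conventions for the moment map $\Phi$, the invariant inner product on $\g$, the area form $\Vol_{\P^1}$, and the linearization $L$ implicit in the definition of the degree \eqref{degree} and of the Harder-Narasimhan vector.
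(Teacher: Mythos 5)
Your proposal is correct and is essentially an expanded version of the paper's one-line proof, which simply invokes (equivariant) Riemann--Roch together with the fact that $\C^*_\lambda$ acts on $L|_{Z_\lambda}$ with weight $(\lambda,\lambda)$. Your decomposition via the normalization sequence — principal component contributing the leading term $k(\lambda,\lambda)$ because its image lies in $Z_\lambda$ and $\pi^*\mO_{\P^1}(k)$ is trivial on the bubbles, with node and bubble corrections bounded in terms of $d$ and $\lambda$ — is precisely the content the paper leaves implicit.
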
  

\begin{proof} We can compute the weight by Riemann-Roch
applied to any $\P^1$-fiber of $p$ over $\cZ^\lambda$.  For any
$\C^*_\lambda$-equivariant line bundle $L$ over $\ol{\U}^G_n(\P^1,X)_0 |
\cZ^\lambda$ we write the $\C^*_\lambda$-equivariant Chern class of
$L$ as a sum $c_1(L) + \mu_{L,\lambda}$, where $\mu_{L,\lambda}$ is
the weight on $L$.  The weight of $\C^*_\lambda$ acting on the
push-forward $\det(Rp_* e^*\mO_X(1) \otimes q^* \mO_{\P^1}(k))$ is the
equivariant part of the equivariant first Chern class of the
push-forward of the Chern character of $e^*\mO_X(1) \otimes q^*
\mO_{\P^1}(k)$.
The dominant contribution, linear in $k$, is 
$$ \left( \int_{\P^1} c_1(\mO_{\P^1}(k) ) \right)
\mu_{\mO_X(1),\lambda} = k (\lambda,\lambda) .$$
The actual weight is given by the above formula plus a zeroth order
term, as claimed. \end{proof}  

\noindent We denote by $\kappa_G(V)$ resp. $\kappa_T(V)$ the quotient
of a $G$-equivariant vector bundle $V$ on $\ol{\M}_{0,n}(\P^1 \times
X,(1,d))$ to $\ol{\M}^G_n(\P^1,X,(1,d))_{0}$ resp.
$\ol{\M}^T_n(\P^1,X,(1,d))_{0}$.

\begin{corollary} \label{pos}  For any $G$-vector bundle
$V$ over $\ol{\M}_{0,n}(\P^1 \times X,(1,d))$, for $k \gg 0$ we have
$$ \chi^{\vir}(\ol{\M}_n^G(\P^1,X,d)_0, \kappa_G(V \otimes D(k))) =
  \chi^{\vir}( \ol{\M}_{0,n}(\P^1 \times X,(1,d)), V \otimes D(k))^G .$$
\end{corollary}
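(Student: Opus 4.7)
The plan is to apply the non-abelian localization formula of Theorem~\ref{nonab} to $V = E \otimes D(k)$ and show that for $k$ sufficiently large only the semistable stratum $\lambda = 0$ contributes.

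Applying Theorem~\ref{nonab} yields
\begin{multline*}
\chi^{\vir}(\ol{\M}, E \otimes D(k))^G = \sum_\lambda (-1)^{\codim(\ol{\M}^\lambda)} \chi^{\vir}\bigl(\ol{\cZ}^\lambda,\; E \otimes D(k) \\ \otimes \Sym\bigl(T_{\ol{\cZ}^\lambda}(\ol{\cY}^\lambda)^\dual \oplus T_{\ol{\M}^\lambda}\ol{\M}\bigr) \otimes \det(T_{\ol{\M}^\lambda}\ol{\M}) \otimes \lambda_{-1}(\g/\p_\lambda)\bigr)^{G_\lambda}.
\end{multline*}
The $\lambda = 0$ term is the $G$-invariant virtual Euler characteristic over the semistable locus, which by descent under the finite-automorphism hypothesis of Theorem~\ref{proper} equals $\chi^{\vir}(\ol{\M}_n(\P^1,X,d)_0,\kappa_G(E \otimes D(k)))$, the left-hand side of the desired identity.

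To kill each stratum with $\lambda \neq 0$ I decompose its contribution according to the central $\C^*_\lambda \subset G_\lambda$. Since $\lambda$ is the direction of steepest ascent of the Kempf-Ness functional in Theorem~\ref{typestrat}(d), the normal bundle $T_{\ol{\M}^\lambda}\ol{\M}$ along $\ol{\cZ}^\lambda$ carries strictly positive $\C^*_\lambda$-weights, while $T_{\ol{\cZ}^\lambda}(\ol{\cY}^\lambda)^\dual$ and $\g/\p_\lambda$ carry non-negative $\C^*_\lambda$-weights; the $\Sym$, $\det$, and $\lambda_{-1}$ factors therefore contribute $\C^*_\lambda$-weights bounded below by a constant independent of $k$. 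The bundle $E$ contributes bounded weights (also independent of $k$), while Lemma~\ref{big} gives that $D(k)|_{\ol{\cZ}^\lambda}$ has $\C^*_\lambda$-weight at least $k(\lambda,\lambda) - c$. Summing, every $\C^*_\lambda$-weight occurring in the integrand exceeds $k(\lambda,\lambda) - C$ for some $C$ independent of $k$, strictly positive once $k$ is large. A character with only strictly positive $\C^*_\lambda$-weights has no invariants, so the $G_\lambda$-invariant part vanishes.

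Finally, the energy bound coming from the fixed homology class $d$ controls $(\lambda,\lambda)$ on those strata that can appear, so only finitely many nonzero strata must be eliminated and a single large $k$ suffices uniformly. The step I expect to be the main obstacle is making the positivity of $\C^*_\lambda$-weights on $T_{\ol{\M}^\lambda}\ol{\M}$ precise at the level of the virtual tangent complex on $\ol{\cZ}^\lambda$: this requires that the symplectically constructed Kirwan-Ness stratification of Theorem~\ref{typestrat} coincide with the algebraic Harder-Narasimhan stratification underlying the localization of Theorem~\ref{nonab}, and that the axiomatic properties of virtual fundamental sheaves recorded in Assumption~\ref{axioms} behave compatibly with restriction to these strata.
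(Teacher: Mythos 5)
Your proposal follows exactly the paper's argument: the paper's proof is the single sentence that by Theorem~\ref{nonab} and Lemma~\ref{big}, for $k \gg 0$ only the semistable stratum contributes to the invariant part. You have simply filled in the weight-counting details (and correctly flagged the subtleties about weight signs that the paper itself only acknowledges in the remark following the corollary), so this is the same proof, elaborated.
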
  

\begin{proof}  
By Theorem \ref{nonab} and Lemma \ref{big}, for $k \gg 0$ the only
contribution to the invariant part $
\chi^{\vir}(\ol{\M}_{0,n}(\P^1,X,d), \kappa_G(V))^G$ comes from the
$0$-semistable stratum.  The result now follows from Proposition
\ref{trivabel}.
\end{proof}

The final step is pass to cohomology using the virtual Riemann-Roch
formula proved in Tonita \cite{to:rr}.  To justify the application of
the virtual Riemann-Roch formula, we must show that
$\ol{\M}^G_n(\P^1,X,d)_{0}$ embeds in a non-singular proper
Deligne-Mumford stack.  For this note that $\ol{\M}_{0,n}(\P^1 \times
X,(1,d))$ embeds equivariantly in some $\ol{\M}_{0,n}(\P^N,\iota_*
(1,d))$ which is non-singular, where $\iota:\P^1 \times X \to \P^N$ is
a projective embedding, and so $\ol{\M}^G_n(\P^1,X,d)_0$ embeds in
$\ol{\M}_{0,n}(\P^N,\iota_* (1,d)) \qu G$.  Now as in Kirwan
\cite{ki:pa} we may assume, after blowing up recursively the
orbit-type strata, that $\ol{\M}_{0,n}(\P^N,\iota_* (1,d)) \qu G$ is a
locally free quotient and so a non-singular proper Deligne-Mumford
stack.

\begin{corollary} \label{descend}  Suppose that every 
$0$-semistable gauged map has finite automorphism group.  For any
  $G$-equivariant algebraic vector bundle $V$ on $\ol{\M}_{0,n}(\P^1
  \times X,(1,d))$,
$$ \chi^{\vir}(\ol{\M}_n^G(\P^1,X,d)_{0}, \kappa_G(V)) = (\#
    W)^{-1} 
\chi^{\vir}(\ol{\M}_n^T(\P^1,X,d)_{0}, \kappa_T(V
      \otimes \ul{\Lambda}(\g/\t))) .$$
\end{corollary}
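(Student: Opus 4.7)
The plan is to chain together the descent Corollary~\ref{pos} with the stacky $K$-theoretic abelianization that follows Theorem~\ref{nonab}; both ingredients live on the common ambient space $\ol{\M}_{0,n}(\P^1 \times X, d)$.

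For $k \gg 0$, Corollary~\ref{pos} applied to $E$ gives
$$ \chi^{\vir}(\ol{\M}_n(\P^1,X,d)_{0,G}, \kappa_G(E \otimes D(k))) = \chi^{\vir}(\ol{\M}_{0,n}(\P^1 \times X, d), E \otimes D(k))^G. $$
The stacky $K$-theoretic abelianization (the corollary following Theorem~\ref{nonab}), applied to the $G$-bundle $E \otimes D(k)$, rewrites the right hand side as
$$ (\# W)^{-1} \chi^{\vir}(\ol{\M}_{0,n}(\P^1 \times X, d), E \otimes \lambda_{-1}(\g/\t) \otimes D(k))^T. $$
The $T$-analogue of Corollary~\ref{pos}, applied to the $T$-equivariant bundle $E \otimes \lambda_{-1}(\g/\t)$ (enlarging $k$ if necessary), then converts this to $(\# W)^{-1} \chi^{\vir}(\ol{\M}_n(\P^1,X,d)_{0,T}, \kappa_T(E \otimes \lambda_{-1}(\g/\t) \otimes D(k)))$. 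Concatenating yields the desired formula, but with an auxiliary twist by $D(k)$ on both sides.

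The remaining step is to eliminate the $D(k)$, and this is where I expect the main difficulty. The line bundle $D(1)$ descends to line bundles $L_G := \kappa_G(D(1))$ and $L_T := \kappa_T(D(1))$ on the two coarse DM stacks, so both sides of the chained identity become Euler characteristics of fixed complexes tensored with $L_G^{k}$ or $L_T^{k}$. By virtual Riemann-Roch (Assumption~\ref{axioms}) these are polynomial in $k$, and two polynomials agreeing for all $k \gg 0$ agree identically; evaluation at $k = 0$ gives the stated formula. A cleaner alternative, if Corollary~\ref{pos} applies to derived-category inputs, is to feed it the object $E \otimes D(-k)$ for $k$ large: the twist $D(k)$ built into the formula then cancels $D(-k)$ and one obtains the $D(k)$-free identity at once.

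The main obstacle is rigorously justifying this last step: in either route one must verify that the non-semistable contributions in the non-abelian localization of Theorem~\ref{nonab} continue to vanish after replacing $E$ by $E \otimes D(-k)$ (equivalently, that the polynomial obtained for $k \gg 0$ really extends to all $k$), which relies on the $\C^*_\lambda$-weight bound of Lemma~\ref{big} dominating the fixed weights carried by $E$ on each stratum $\ol{\cZ}^\lambda$.
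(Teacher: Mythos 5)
Your derivation of the twisted identity for $E \otimes D(k)$, $k \gg 0$, by chaining Corollary~\ref{pos}, the stacky $K$-theoretic abelianization, and the $T$-analogue of Corollary~\ref{pos}, is exactly the first sentence of the paper's proof. Where you diverge is in removing the auxiliary twist $D(k)$, and here the paper's argument is both different and cleaner than either of your two routes: it observes that the identity to be proved is $\Q$-linear in the class of $E$ and factors through the map $\kappa_G \oplus \kappa_T$ into $K(\ol{\M}_n(\P^1,X,d)_{0,G},\Q) \oplus K(\ol{\M}_n(\P^1,X,d)_{0,T},\Q)$, that this image is finite dimensional, and that tensoring with $\kappa_G(D(k))$ resp.\ $\kappa_T(D(k))$ is an isomorphism of these $K$-groups; hence the classes $(\kappa_G\oplus\kappa_T)(E \otimes D(k))$ with $k \gg 0$ already span the whole image, and the identity propagates to all of it by linearity. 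This is pure linear algebra and requires no Riemann--Roch input at this stage. By contrast, your route (a) needs the Euler characteristics to be \emph{quasi}-polynomial rather than polynomial in $k$ (for a Deligne--Mumford stack, Toen's Riemann--Roch produces contributions of the form $\zeta^k$ times polynomials, $\zeta$ a root of unity on a component of the inertia stack); the conclusion ``agrees for $k \gg 0$ implies agrees at $k=0$'' still holds for quasi-polynomials, so this route can be repaired, but it invokes Assumption~\ref{axioms}(c) earlier than the paper does. Your route (b) fails for precisely the reason you suspect: the threshold in Corollary~\ref{pos} depends on the $\C^*_\lambda$-weights of the input bundle on the strata $\ol{\cZ}^\lambda$, and twisting by $D(-k)$ lowers those weights by roughly $k(\lambda,\lambda)$, so the required $k'$ grows at least as fast as $k$ and the twists never cancel. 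The paper's generation argument is the intended way around both obstacles, and it is worth noting that it is the finite dimensionality of the \emph{image} under $\kappa_G \oplus \kappa_T$ (not of $K_G$ of the Artin stack itself) that makes the descending chain of spans stabilize.
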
 

\begin{proof} 
By Corollary \ref{pos}, the statement of Corollary \ref{descend} holds
for bundles of the form $V \otimes D(k)$ for $k \gg 0$.  Virtual
Riemann-Roch implies that the virtual Euler characteristics
$\chi^{\vir}(\ol{\M}_n^G(\P^1,X,d)_0, \kappa_G(V \otimes D(k)))$ and $
\chi^{\vir}(\ol{\M}_n^T(\P^1,X,d)_{0}, \kappa_T(V \otimes D(k) \otimes
\ul{\Lambda}(\g/\t))) $ are quasipolynomial in $k$, and the claim
follows.  \end{proof}

Taking highest order terms on both sides of Corollary \ref{descend}
under the Adams operations, as in Teleman-Woodward \cite[Section
  5]{te:in}, gives
\begin{corollary} Suppose that any $0$-semistable gauged map has
finite automorphism group.  Then for any $d \in H^G_2(X,\Z)$ and
$G$-equivariant algebraic vector bundle $V$ on $\ol{\M}_{0,n}(\P^1 \times
X,(1,d)))$
$$ \int_{[\ol{\M}^G_n(\P^1,X,d)_{0}]} \kappa_G(\Ch_G(V)) = (\# W)^{-1}
\int_{[\ol{\M}^T_n(\P^1,X,d)_{0}]} \kappa_T (\Ch_G(V) \cup
\Eul(\Ind(\g/\t)) ) .$$
\end{corollary}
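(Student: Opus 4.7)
The plan is to deduce this cohomological identity from the $K$-theoretic abelianization of Corollary \ref{descend} by applying Toen's Riemann-Roch to both sides and comparing leading-order terms, as suggested by the lead-in sentence of the corollary.

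First, I would apply the Riemann-Roch transformation $\tau$ of \cite{toen:rr} to both sides of the equation in Corollary \ref{descend}, with $E = \alpha$ interpreted via the Chern character as an element of the equivariant $K$-theory of $\ol{\M}_{0,n}(\P^1 \times X,d)$ tensored with $\Q$. By Assumption \ref{axioms}(c), the leading-order term of $\tau(\O^{\vir}_{\ol{\M}_n(\P^1,X,d)_0})$ in the natural grading by (virtual) codimension is the virtual fundamental class $[\ol{\M}_n(\P^1,X,d)_0]$, and similarly for the $T$-quotient. Together with multiplicativity of $\tau$ with respect to tensor products of locally free sheaves and the Todd genus normalization, this converts $\chi^{\vir}$ on each side into a pairing of $\ev^*$-classes with the virtual fundamental class, up to Todd-class corrections which contribute only to lower-order terms.

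Next I would unpack the factor $\lambda_{-1}(\g/\t)$ on the right-hand side. Its Chern character is $\prod_{\alpha \in \Delta_+ \cup \Delta_-}(1 - e^{-\alpha})$ where the $\alpha$ run over the roots of $\g$ transverse to $\t$; the top-degree part of this expression is exactly $\prod \alpha = \Eul(\g/\t)$. All other Chern-character contributions lie in strictly lower cohomological degree. Pairing with the leading-order term of the virtual structure sheaf on $\ol{\M}_n(\P^1,X,d)_{0,T}$ and taking the top-degree part of the resulting cohomology class — i.e.\ the coefficient of the virtual fundamental-class evaluation — I would recover precisely $\int \kappa_T(\alpha \wedge \Eul(\g/\t))$.

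The combinatorial constant $(\#W)^{-1}$ is preserved, and the factor $\alpha$ on the $G$-side simply pulls through $\kappa_G$ to give $\int \kappa_G(\alpha)$. The main obstacle, as anticipated by the authors, is the rigorous identification of the ``leading order term'' of Toen's Riemann-Roch map on a virtual structure sheaf with integration against $[\ol{\M}_n(\P^1,X,d)_0]$; this is exactly the compatibility packaged in Assumption \ref{axioms}(c), which itself relies on the preprint \cite{jo:rr3}. Once that compatibility is granted, the identity is obtained by matching top-degree components on both sides of the equation of Corollary \ref{descend}, and one need not verify any further coincidence of lower-order Todd-class contributions since they cancel between the two sides by the $K$-theoretic equality already established.
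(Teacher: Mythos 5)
Your proposal follows the paper's own (essentially one-line) argument: apply Toen's Riemann--Roch via Assumption \ref{axioms}(c) to both sides of Corollary \ref{descend}, identify the leading term of $\tau(\O^{\vir})$ with the virtual fundamental class, use $\on{ch}(\lambda_{-1}(\g/\t)) = \prod_{\alpha}(1-e^{-\alpha}) = \Eul(\g/\t) + \cdots$, and match top-degree terms, with the spanning statement from Corollary \ref{descend} disposing of the Todd corrections. One small correction: the non-Euler terms of $\prod_{\alpha}(1-e^{-\alpha})$ lie in strictly \emph{higher} cohomological degree, not lower --- which is in fact exactly what guarantees that, for $\alpha$ of top degree, neither they nor the Todd factors can pair nontrivially against the virtual fundamental class, so the conclusion stands as you state it.
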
 

Combining with the main result Theorem \ref{equal} gives the
abelianization Theorem \ref{qmartinthm} for genus zero gauged
Gromov-Witten invariants in the ``small area'' chamber, for classes of
the form $\alpha = \Ch_G(V)$ for algebraic vector bundles $V$.  By
Fulton-MacPherson \cite{fm:compact}, any class on $\ol{M}_n(\P^1)$ is
of this type.  By combining these results with those of \cite{cross},
one obtains the same result for arbitrary $\rho$ under suitable finite
automorphism assumptions.
 
\section{Construction of a cobordism}
 
One expects the equality of the zero-area gauged Gromov-Witten
invariants with the small-area gauged Gromov-Witten invariants in
Theorem \ref{equal} to hold more generally for any Hamiltonian
$K$-manifold.  We prove the equality in the case that every vortex is
regular, by constructing a cobordism between the two moduli spaces,
namely a differentiable structure on the union
$$\ol{M}_n^K({\P^1},X)_{[0,\rho]} := \bigcup_{\rho' \in [0,\rho]}
\ol{M}_n^K({\P^1},X)_{\rho'} .$$
Let $\ol{M}^K({\P^1},X)_{(0,\rho]}$ denote the locus with non-zero
  vortex parameter.  These spaces have natural topologies defined by
  Gromov convergence of the sections and $C^0$ convergence of the
  connections, up to gauge transformation.  The main result of this
  section is:

\begin{theorem} \label{cobord} Suppose that 
every element of $\ol{M}_n^K({\P^1},X)_0$ is regular. For $\rho$
sufficiently small, $\ol{M}_n^K({\P^1},X)_{[0,\rho]}$ has the structure
of an oriented stratified-smooth orbifold, giving an oriented
stratified-smooth orbifold cobordism between $\ol{M}_n^K({\P^1},X)_\rho$
and $\ol{M}_n^K({\P^1},X)_0$.
\end{theorem}

\subsection{Approximation for zero-area vortices} 

First we show that a regular zero-area vortex may be approximated by
sequences of small-area vortices. Let $A$ be the trivial connection on
$\Sigma \times K$.

\begin{theorem}\label{families} 
  Given a regular zero-area-vortex ${u} := (u_0,\ldots,u_m)$ of
  combinatorial type $\Gamma$ there exists a neighborhood $U$ of $0$
  in $\Def_\Gamma({u})$ and constants $c_0,\rho_0 > 0 $ such that for
  $\rho < \rho_0$ and $(a,v) \in U$ there exists a unique
  $(a_\rho,v_\rho)$ depending smoothly on
  $a,v$ such that
  $(A + \pi^*a_\rho, {u}_\rho = \exp_{{u}}(v + v_\rho))$
  is a $\rho$-vortex of combinatorial type $\Gamma$ in Coulomb gauge
  with respect to $(A,{u})$, $\Vert (a_\rho, v_\rho) \Vert < c_0
  \rho^{3/2}$, and $(a_\rho,v_\rho)$ in the image of the right
  inverse $Q^{0,\rho}$ of Lemma \ref{uniform}. 
\end{theorem}

The strategy of proof is to construct approximate solution and then
use an iteration to construct an exact solution.  It has in common
with the analogous theorem in Gaio-Salamon \cite{ga:gw} for the large
area limit that the quadratic term does {\em not} satisfy a uniform
bound. However, our case is easier because the space on which the
quadratic term is not uniformly bounded is finite dimensional.  The
approximation theorem implies that $\ol{M}^K({\P^1},X)_0$ is contained
in the closure of the union of moduli spaces $\ol{M}^K({\P^1},X)_\rho,
\rho \le \rho_0$, so that it provides a compactification in the usual
sense.  Given an zero-area-vortex $(A,u)$ with $A$ the trivial
connection we consider the equation
$$ F_{A + \pi^*a} + \rho \Vol_{\P^1} (\exp_u v)^* \Phi = 0 .$$
Consider the Hodge splitting
\begin{equation} \label{hodge} \Omega^2({\P^1},\k)_{1,p} = \Ker(\d^*)
  \oplus \Im(\d) \cong \k \oplus \Im(\d). \end{equation}
We denote by $\pi_0$ resp. $\pi_1$ the projection on the first
resp. second factor, 
$$ \pi_0(v) = (\int_{\P^1} v) \Vol_{\P^1}, \ \ \ \pi_1(v) = v
- \pi_0(v).
$$
Consider the following modification of \eqref{cutout} by using Banach
norms given by multiplying the standard $W^{1,p}$ resp. $L^p$, $p>2$
norms on the harmonic pieces by $\rho$.
\begin{equation} \label{cFr} 
\cF_{A,u}^{0,\rho}: \Omega^0({\P^1},\k)_{1,p} \oplus
\Omega^{0}({\P^1},u^*T X)_{1,p} \to \Im (\d_A \oplus \d_A*)_{0,p} \oplus
(\k \oplus \k) \oplus \Omega^{0,1}({\P^1},u^*T X)_{0,p}
\end{equation} 
$$ \cF_{A,u}^{0,\rho}:\bma a \\ v \ema \mapsto \bma \phi
\\ \psi \\ \mu \\ \lambda \\ \eta \ema := \bma \pi_1 (F_{A + \pi^*a} +
* \rho \exp_u(v)^* \Phi) \\ \pi_1(\d_A * a) \\ \pi_0
( * \exp_u(v)^* \Phi + \rho^{-1} ([a,a]/2)) \\ \pi_0 * L_{Jv} \Phi
\\ \Psi_u(v)^{-1} \ol{\partial}_{A + \pi^*a} \exp_u (v) \ema .$$
Here we have used that the term $\int_{\P^1} \d_A a$ vanishes by Stokes' theorem.  The map
$\cF_{A,u}^{0,\rho}$ of \eqref{cFr} is a smooth map of Banach spaces,
by standard Sobolev multiplication theorems.  The double subscript in
$\cF_{A,u}^{0,\rho}$ is meant to indicate that we view
$\cF_{A,u}^{\rho}$ as a map with respect to the Banach spaces for zero
area introduced above.

\begin{definition}  Let
$\ti{D}^{0,\rho}_{a,v}$ denote the linearization of
$\cF^{0,\rho}_{A,u}$ at $(a,v)$, that is,
\begin{equation} \label{inftyD} \ti{D}^{0,\rho}_{a,v}(a_1,v_1) =
  \bma \pi_1(\d_{A + \pi^*a} a_1 + \rho * L_{v_1} \Phi) \\
  \pi_1(\d_A * a_1) \\ \pi_0 ( * L_{v_1} \Phi + \rho^{-1} [a,a_1]/2 ) \\ \pi_0 * L_{Jv_1} \Phi \\ D_{A + \pi^*a ,\exp_u(v)}
  (a_1,v_1) \ema .
\end{equation} 
The operators $\ti{D}^{0,\rho}_{0,v}$ (that is, with $a
= 0$) have limit $\ti{D}^{0,0}_{0,v}$ given by 
$$
\ti{D}^{0,0}_{0,v}(a_1,v_1) = \bma \pi_1(\d_{A} a_1) \\
\pi_1(\d_A * a_1) \\ \pi_0 * L_{v_1} \Phi \\ \pi_0  * L_{Jv_1} \Phi \\
D_{A ,\exp_u(v)} (a_1,v_1) \ema .
$$
\end{definition} 
\noindent We wish to compare this operator with the linearized
operator $\ti{D}_u^0$ for the zero-area vortex $u$ of \eqref{linzer},
which does not have a gauge-theoretic part.  Let $A$ be the trivial
connection.  Note that a zero-area vortex $u$ is regular iff
$\ti{D}^{0,\rho}_{0,0}$ is surjective for $\rho$ sufficiently small.
Indeed, since the operator $\d_A \oplus \d_A^*$ for the trivial
connection $A$ is surjective onto the first component in the Hodge
decomposition \eqref{hodge}, $\ti{D}^{0,0}_{0,0}$ is surjective iff
$\ti{D}^0_{u}$ is.

Next we introduce suitable right inverses for the operators
$\ti{D}^{0,\rho}_{0,0}$.  

\begin{lemma} \label{uniform} If $(A,u)$ is a regular zero-area-vortex
  then the linearization $\ti{D}^{0,0}_{0,0}$ has a uniformly bounded
  right inverse $Q^{0,0}$ with the following property: There
  exists a constant $c > 0$ such that if $ Q^{0,0}
  (\phi,\psi,\mu,\lambda,\eta) = (a,v) $
then 
\begin{equation}
 \Vert a \Vert \leq c \Vert \phi,\psi \Vert, \quad \Vert v \Vert
 \leq c \Vert \phi,\psi,\mu,\lambda,{\eta} \Vert. \end{equation}
\end{lemma}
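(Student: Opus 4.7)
The plan is to exploit the block-triangular structure of $\ti{D}^{0,0}_0$ visible in \eqref{inftyD2}: the first two components depend only on the connection deformation $a_1$ via $(\pi_1 \d_A,\, \pi_1 \d_A *)$, the middle two depend only on $\xi_1$ via $\pi_0 L_{\xi_1}\Phi$ and $\pi_0 L_{J\xi_1}\Phi$, and only the final Cauchy--Riemann component couples $a_1$ and $\xi_1$. Accordingly, I would construct $Q^{0,0}_0$ in two stages matching this structure, recovering $a$ from $(\phi,\psi)$ alone and then $\xi$ from the remaining data together with the already-known $a$.

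\emph{Stage one: inverting the connection block.} Because $\Sigma$ has genus zero and $A$ is trivial, $H^1(\Sigma,\k) = 0$, so by Hodge theory the operator $a \mapsto (\pi_1 \d_A a,\, \pi_1 \d_A * a)$, restricted to the Coulomb slice $\ker(\d_A^*) \subset \Omega^1(\Sigma,\k)_{1,p}$, is an isomorphism onto $\Im(\d)_{0,p} \oplus \Im(\d)_{0,p}$. The standard elliptic estimate for $\d + \d^*$ yields a bounded inverse $T \colon (\phi,\psi) \mapsto a$ with $\|a\|_{1,p} \le c\|(\phi,\psi)\|_{0,p}$. The key point is that this stage involves no other data, so the bound on $a$ depends only on $\phi,\psi$, as required for the first inequality in the lemma.

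\emph{Stage two: inverting the section block given $a$.} With $a = T(\phi,\psi)$ fixed, the remaining three components of $\ti{D}^{0,0}_0(a,\xi) = (\phi,\psi,\mu,\lambda,\nu)$ read
$$\pi_0 L_{\xi_1}\Phi = \mu, \qquad \pi_0 L_{J\xi_1}\Phi = \lambda, \qquad D_{A,u}(a,\xi_1) = \nu.$$
Under the identifications $\pi_0 L_\xi \Phi = E_u(\xi)$ and $\pi_0 L_{J\xi}\Phi = E_u^*(\xi)$, and after separating the $a$-dependence in $D_{A,u}(a,\xi_1) = D_u \xi_1 + a_X^{0,1}$, this system is exactly
$$\ti{D}^0_u \xi_1 = (\mu,\lambda,\nu - a_X^{0,1}).$$
By regularity of $u$ (which is equivalent to surjectivity of $\ti{D}^{0,0}_0$), the operator $\ti{D}^0_u$ admits a bounded right inverse $Q_u$. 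Set $\xi := Q_u(\mu,\lambda,\nu - a_X^{0,1})$. Continuity of the action map $a \mapsto a_X$ and the Stage-one bound give
$$\|\xi\|_{1,p} \le c\bigl(\|\mu\|+\|\lambda\|+\|\nu\|+\|a_X^{0,1}\|_{0,p}\bigr) \le c'\|(\phi,\psi,\mu,\lambda,\nu)\|.$$
Define $Q^{0,0}_0(\phi,\psi,\mu,\lambda,\nu) := (T(\phi,\psi),\xi)$. By construction the first two rows are satisfied by Stage one, the middle two and the last by Stage two, so $Q^{0,0}_0$ is a bounded right inverse with the advertised bounds.

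The main obstacle is really the decoupling in Stage one: the vanishing $H^1(\Sigma,\k) = 0$ on $S^2$ is what makes the connection block invertible with a bound in $\|(\phi,\psi)\|$ alone, independent of $(\mu,\lambda,\nu)$. In higher genus, harmonic $1$-forms would obstruct this inversion and no such clean splitting exists, which is one reason the small-area theorem is specifically a genus-zero phenomenon. Everything else --- elliptic regularity on $S^2$, the Sobolev multiplication bound on $a_X^{0,1}$, and invoking $Q_u$ from the regularity hypothesis --- is routine.
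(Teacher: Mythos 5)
Your proposal is correct and follows essentially the same route as the paper: the paper's proof also sets $a = (\d_A \oplus \d_A^*)^{-1}(\phi,\psi)$ via the Hodge splitting and then solves for $\xi$ by applying the right inverse of $\ti{D}^0_u$ (guaranteed by regularity) to $(\mu,\lambda,\nu - D_{A,u}(a,0))$, which is exactly your $\nu - a_X^{0,1}$ correction. Your write-up merely spells out the elliptic estimates and the genus-zero vanishing $H^1(\Sigma,\k)=0$ that the paper leaves implicit.
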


\begin{proof} 
Define $Q^{0,0}(\phi,\psi,\mu,\lambda,{\eta}) = (a,v) $ where $ a
= (\d_A \oplus \d_A^*)^{-1}(\phi,\psi) $, the right inverse to $\d_A
\oplus \d_A^*$ is given by the Hodge splitting, and
$$ 
(\pi_0 L_{v} \Phi, \pi_0 L_{J v} \Phi, D_{A,{u}}(0,v)) =  ( \mu,\lambda,{\eta} - D_{A,u} (a,0)) .$$
The claimed properties follow from standard Sobolev multiplication
theorems. 
\end{proof}

\begin{corollary}  
If $(A,u)$ is a regular zero-area-vortex then the linearization
$\ti{D}^{0,\rho}_{0,0}$ has a uniformly bounded right inverse
$Q^{0,\rho}$ with $Q^{0,\rho} = Q^{0,\rho}_{0} + \rho Q_1^{0,\rho} $
for some uniformly bounded operator $Q_1^{0,\rho}$, and $ Q^{0,\rho}_0
(\phi,\psi,\mu,\lambda,\eta) = (a,v) $ with $ \Vert a \Vert \leq c
\Vert \phi,\psi \Vert$ and $\Vert v \Vert \leq c \Vert
\phi,\psi,\mu,\lambda,{\eta} \Vert$.
\end{corollary}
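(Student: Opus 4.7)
The plan is to treat $\rho$ as a small perturbation parameter and apply a standard Neumann series argument. Comparing \eqref{inftyD} at $a=0$ with \eqref{inftyD2}, the two operators differ only in the first slot by a linear-in-$\rho$ term:
\[
\ti{D}^{0,\rho}_{0} \;=\; \ti{D}^{0,0}_{0} \,+\, \rho\, P,
\qquad
P(a_1,\xi_1) \;:=\; \bigl(\pi_1(\Vol_\Sigma L_{\xi_1}\Phi),\,0,\,0,\,0,\,0\bigr).
\]
The operator $P$ is uniformly bounded between the relevant Sobolev spaces by the Sobolev multiplication estimate applied to $L_{\xi_1}\Phi$, together with the continuity of $\pi_1$ in the Hodge decomposition \eqref{hodge}. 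Since $(A,u)$ is a regular zero-area vortex, the preceding corollary guarantees that $\ti{D}^{0,0}_{0}$ is surjective, and Lemma~\ref{uniform} supplies a bounded right inverse $Q^{0,0}_0$ with explicit slot-wise bounds.

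From here the argument is routine: I would compute
\[
\ti{D}^{0,\rho}_{0}\,Q^{0,0}_0 \;=\; \Id \,+\, \rho\, P Q^{0,0}_0,
\]
choose $\rho_0>0$ small enough that $\rho_0 \|P Q^{0,0}_0\| < 1$, and for $\rho<\rho_0$ invert the right-hand side by the convergent Neumann series. Setting
\[
Q^{0,\rho}_0 \;:=\; Q^{0,0}_0 \bigl(\Id + \rho\, P Q^{0,0}_0\bigr)^{-1}
\;=\; Q^{0,0}_0 \,-\, \rho\, Q^{0,0}_0 P Q^{0,0}_0 \,+\, O(\rho^2)
\]
produces a right inverse of $\ti{D}^{0,\rho}_{0}$ of norm bounded uniformly in $\rho<\rho_0$. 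Grouping the tail of the series, this has the stated form $Q^{0,\rho}_0 = Q^0_0 + \rho\, Q_1^0$ with $Q_1^0 = -Q^{0,0}_0 P Q^{0,0}_0 + O(\rho)$ uniformly bounded.

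The pointwise slot-wise estimates are inherited from Lemma~\ref{uniform}: the leading piece $Q^{0,0}_0$ already satisfies $\|a\|\le c\|\phi,\psi\|$ and $\|\xi\|\le c\|\phi,\psi,\mu,\lambda,\nu\|$ verbatim. The correction $\rho Q_1^0$ is of order $\rho$ times the full data norm, so for $\rho<\rho_0$ it is absorbed into a slightly enlarged constant $c$. A small additional observation keeps the $\|a\|$-estimate free of $(\mu,\lambda,\nu)$ at leading order: because $P$ has the triangular structure of outputting into the $\phi$-slot only, each iterate in the Neumann series feeds back into $Q^{0,0}_0$ as data of the form $(\phi',0,0,0,0)$, for which Lemma~\ref{uniform} bounds the $a$-component by $\|\phi'\|$ alone; the remaining $O(\rho)$ cross-contribution from $\xi$ is absorbed into $c$ after shrinking $\rho_0$.

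The main --- and in fact only --- obstacle is the bookkeeping of the slot-wise bounds through the Neumann iteration, ensuring that the refined estimate $\|a\|\le c\|\phi,\psi\|$ survives with a constant independent of $\rho<\rho_0$. No new analytic input is required beyond Lemma~\ref{uniform} and the elementary geometric series.
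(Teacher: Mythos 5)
Your proposal is correct and is essentially the paper's argument: the paper's entire proof is the one-line observation that $\ti{D}^{0,\rho}_{0} = \ti{D}^{0,0}_{0} + \rho \ti{D}^{1,0}_{0}$ with $\ti{D}^{1,0}_{0}(a_1,\xi_1) = (\pi_1(L_{\xi_1}\Phi),0,0,0,0)$, leaving the Neumann-series inversion and the propagation of the slot-wise bounds from Lemma \ref{uniform} implicit. You have simply spelled out those implicit steps, including the observation that the perturbation lands only in the $\phi$-slot, so nothing new is needed.
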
  

\begin{proof}  
Since $\ti{D}^{0,\rho}_{0,0} = \ti{D}_{0,0}^{0,0} + \rho
\ti{D}_{0,0}^{0,\rho,'}$ where $ \ti{D}_{0}^{0,\rho,'}(a_1,v_1) = (
\pi_1( * L_{v_1} \Phi), 0,0,0,0) $.
\end{proof}

\begin{lemma} \label{quadr}
The map $\cF^{0,\rho}_{A,u}$ satisfies a uniform quadratic
estimate except for a term quadratic in $a$ which has norm linear in
$\rho^{-1}$:
\begin{eqnarray*}
\cF_{A,u}^{0,\rho}(a + a_1, v + v_1) -
\cF_{A,u}^{0,\rho}(a,v)
&=& \ti{D}_{a,v}^{0,\rho}(a_1,v_1)
+ (\phi,0,\mu,0,\eta) 
\end{eqnarray*}
where 
%
$$ 
\Vert \phi \Vert \leq c_1 ( \Vert a_1 \Vert^2 + \rho \Vert v_1
\Vert^2) \quad \Vert \mu \Vert \leq c_2 ( \Vert v_1 \Vert^2 +
\rho^{-1} \Vert a_1 \Vert^2) \quad \Vert \eta \Vert \leq c_3 \Vert
a_1, v_1 \Vert^2
$$
for some constants $c_1,c_2,c_3$ depending on $\Vert A \Vert, \Vert u
\Vert$ and a bound on $\Vert a \Vert, \Vert v \Vert$.
\end{lemma}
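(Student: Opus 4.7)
The plan is to Taylor-expand each of the five components of $\cF^{0,\rho}_{A,u}$ at $(a,\xi)$, subtract the linearization $\ti{D}^{0,\rho}_{a,\xi}(a_1,\xi_1)$, and bound each quadratic remainder separately using the Sobolev multiplications $W^{1,p}\times W^{1,p}\hookrightarrow W^{1,p}$ and $W^{1,p}\times W^{1,p}\hookrightarrow L^p$ on the genus-zero surface $\Sigma$ for $p>2$, together with the uniform $C^k$-bounds on $\Phi$, $J$, and the Hermitian exponential that follow from compactness of $X$ and the assumed bounds on $(A,u)$ and $(a,\xi)$.

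First I would dispatch the two components that are literally linear: the second component $\pi_1(\d_A * a)$ is linear in $a$, and the fourth component $\pi_0 L_{J\xi}\Phi$ is linear in $\xi$, which is exactly why the corresponding slots of the remainder tuple in the lemma are zero. For the first component, the curvature piece contributes $F_{A+a+a_1}-F_{A+a}-\d_{A+a}a_1 = \tfrac12[a_1,a_1]$, bounded in $L^p$ by $c\Vert a_1\Vert_{1,p}^2$, while the moment-map piece contributes $\rho\Vol_\Sigma(\exp_u(\xi+\xi_1)^*\Phi - \exp_u(\xi)^*\Phi - L_{\xi_1}(\exp_u(\xi)^*\Phi))$, which the standard second-order Taylor remainder for $\Phi\circ\exp$ bounds pointwise by a constant times $|\xi_1|^2$; the prefactor $\rho$ in front of the moment map survives and yields the stated bound $\Vert\phi\Vert\le c_1(\Vert a_1\Vert^2+\rho\Vert\xi_1\Vert^2)$.

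The third component is where the anomalous $\rho^{-1}$ appears: the moment-map piece again contributes $c\Vert\xi_1\Vert^2$ by Taylor, but the term $\rho^{-1}(a+a_1)\wedge(a+a_1)-\rho^{-1}a\wedge a$ has linear part $\rho^{-1}(a\wedge a_1+a_1\wedge a)$ and quadratic remainder $\rho^{-1}(a_1\wedge a_1)$, producing the factor $\rho^{-1}\Vert a_1\Vert^2$ in the bound on $\mu$. For the fifth component I would identify $T_{\exp_u(\xi)}X$ with $T_uX$ via $\Psi_u(\xi)^{-1}$ and invoke the standard quadratic estimate for the nonlinear Cauchy--Riemann operator, as in \cite{ms:jh}, extended to allow variation of the connection; the $a_1$-dependence enters linearly via $(a_1)_X^{0,1}$ evaluated at $\exp_u(\xi+\xi_1)$, and its cross-terms with $\xi_1$, together with the nonlinearities coming from $\exp$ and from the Hermitian modification of the Levi-Civita connection, are each bounded in $L^p$ by a constant times $\Vert(a_1,\xi_1)\Vert_{1,p}^2$.

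The main point I would flag is that there is no serious analytic obstacle here: the lemma is essentially bookkeeping of Taylor remainders, with the one wrinkle that the $\rho^{-1}$ coefficient of $a\wedge a$ built into the definition of $\cF^{0,\rho}_{A,u}$ forces the anomalous $\rho^{-1}\Vert a_1\Vert^2$ bound on $\mu$. This failure of a uniform quadratic bound in $\rho$ is confined to the finite-dimensional $\k$-piece singled out by $\pi_0$, and it is precisely this feature that must later be accommodated in the Newton iteration, in analogy with the way the failure of a uniform quadratic bound in the large-area limit of Gaio--Salamon had to be handled by a separate argument on the corresponding finite-dimensional mode.
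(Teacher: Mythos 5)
Your proposal is correct and follows essentially the same route as the paper's proof: compute the difference $\cF^{0,\rho}_{A,u}(a+a_1,\xi+\xi_1)-\cF^{0,\rho}_{A,u}(a,\xi)$ componentwise, note that the second and fourth slots are linear, and identify the quadratic remainders as $[a_1,a_1]/2$ plus the Taylor remainder of $\Phi\circ\exp_u$ (with its $\rho$ prefactor) in the first slot, the Taylor remainder plus $\rho^{-1}a_1\wedge a_1$ in the third slot, and the standard McDuff--Salamon quadratic estimate for the Cauchy--Riemann part in the fifth. If anything, your write-up is slightly more careful than the paper's (which has minor typos in the displayed norm computations for $\phi$ and $\mu$), but the decomposition and the source of the anomalous $\rho^{-1}\Vert a_1\Vert^2$ term are identical.
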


\begin{proof} 
Define $\phi,\mu,\eta$ by
\begin{multline} 
\begin{array}{c}
\cF_{A,u}^{0,\rho}(a + a_1, v + v_1) \\ -
  \cF_{A,u}^{0,\rho}(a,v) \end{array}
= \bma \pi_1 \left( 
\begin{array}{ll}  \d_{A + \pi^*a}
  a_1 + [a_1,a_1]/2 + \rho \Vol_{\P^1} \\ ( * \exp_u(v + v_1)^* \Phi
  - \exp_u(v)^* \Phi) \end{array} \right) \\ \pi_1( \d_A * a_1)
\\ \pi_0 \left( \begin{array}{ll} * \exp_u(v + v_1)^* \Phi -
  \exp_u(v)^* \Phi \\ + \rho^{-1} (\d_{A + \pi^*a} a_1 +
      [a_1,a_1]/2) \end{array} \right) \\ \pi_0( * L_{Jv_1} \Phi )
\\ \Psi_{u}(v + v_1)^{-1} \ol{\partial}_{A + \pi^*(a + a_1)}
\exp_u (v + v_1) - \\ \Psi_u(v)^{-1} \ol{\partial}_{A + a}
\exp_u (v ) \ema \\ = \ti{D}^{0,\rho}_{a,v}(a_1,v_1) +
(\phi,0,\mu,0,\eta).
\end{multline}
The claimed estimates follow.  
\end{proof}

\begin{proof}[Proof of Theorem \ref{families}]
We give the proof in the case that $(A,u)$ has no bubbles; the general
case is similar and left to the reader.  We use Newton iteration to
find $a_\rho,v_\rho$ such that $\cF_{A,u}^{0,\rho}(a_\rho,v_\rho)
= 0 .$ To solve the Newton iteration we must show that our initial
condition is a sufficiently approximate solution so that the blow-up
of the quadratic term does not affect convergence of the iteration.
Let $Q^{0,\rho}$ denote the right inverse of Lemma \ref{uniform}.  We
define by induction a sequence
$$ \hat{\zeta}_\nu = - Q^{0,\rho} \cF^{0,\rho}_{A,u} \zeta_\nu,
\ \ \ \zeta_{\nu + 1} = \zeta_\nu + \hat{\zeta}_\nu $$
such that
\begin{equation} \label{hyp}
 \hat{\zeta}_\nu = (\hat{a}_\nu,\hat v_\nu), \ \ \ \Vert
\hat{a}_\nu \Vert \leq c_0  \rho^{\nu }, \ \ \ \Vert
\hat v_\nu \Vert \leq c_0 \rho^{\nu - 1} .
\end{equation} 
To get the iteration started we define
$$ \zeta_1 = \hat{\zeta}_0 = - Q^{0,\rho} (\rho * \exp_u(v)^*
\Phi,0,0,0,0) .$$
Hence 
$ \zeta_1 = (a_1,v_1)$ and $ \Vert \zeta_1 \Vert \leq c_4
\rho $
with $c_4$ depending on $\sup_{x \in X} | \Phi(x) |$ and the norm of
$Q^{0,\rho}$ which is uniformly bounded by Lemma \ref{uniform}.
Define
$$\hat{\zeta_1} = -Q^{0,\rho} \cF^{0,\rho}(\zeta_1), \ \ \
\zeta_2 = \zeta_1 + 
\hat{\zeta_1} = (a_2,v_2) .$$ 
Then
$$ \cF_{A,u}^{0,\rho}(\zeta_2) = (\phi_2,0,\mu_2,0,\eta_2) $$
and by Lemma \ref{quadr}
$$ \Vert \phi_2 \Vert, \Vert \eta_2 \Vert < c_5 \rho^2,
\ \ \ \ \Vert \mu_2 \Vert < c_6 \rho $$
for some constants $c_5, c_6$ depending on $c_1,c_2,c_3$.  Using Lemma
\ref{uniform} we have
$$\hat{\zeta_2} = (\hat{a}_2,\hat v_2), \ \ \ \Vert \hat{a}_2 \Vert
< c_7 \rho^2, \ \ \ \Vert \hat v_2 \Vert < c_8 \rho $$
for some constants $c_7,c_8$ depending on the previous constants and
the norm of $Q_1$.  

Suppose that the sequence $\hat{\zeta}_1,\ldots, \hat{\zeta}_{\nu-1}$
constructed in this way satisfies the hypotheses \eqref{hyp}.  By
Lemma \ref{quadr}, there exists a constant $c_9 > 0$ depending on the
previous constants such that
$$(\phi_\nu,\psi_\nu,\mu_\nu,\lambda_\nu,\eta_\nu) :=
\cF^{0,\rho}_{A,u}(\zeta_\nu)$$
satisfies for $\rho$ such that for $ \rho < 3 c_0 c_9 $
\begin{eqnarray*}
 \Vert \phi_\nu \Vert &<& c_9 ( \Vert a_\nu \Vert^2 + \rho \Vert v_\nu \Vert^2) 
< c_9 ( c_0^2 \rho^{2\nu} + c_0^2\rho^{2(\nu-1) + 1}) < c_0 
\rho^{ \nu + 1} \\
\Vert \mu_\nu \Vert &<& c_9  ( \Vert v_\nu \Vert^2 + \rho^{-1} \Vert a_\nu \Vert^2) <  c_9 c_0^2 ( \rho^{2(\nu-1)} + 
\rho^{2\nu-1})  <  c_0  \rho^\nu \\ 
\Vert \eta_\nu \Vert &<& c_9  \Vert a_1, v_1 \Vert^2 < c_9 c_0^2 ( \rho^{2 \nu} + \rho^{2 \nu - 1} + \rho)^{2 (\nu- 1) } < c_0 \rho^{ \nu}
  .\end{eqnarray*}
Applying $Q^{0,\rho}$ gives an element $\hat{\zeta}_\nu =
(\hat{a}_\nu,\hat v_\nu)$ satisfying \eqref{hyp}, as required.
Because $\Vert \hat{\zeta}_\nu \Vert < c_0 \rho^{\nu}$, $\zeta_\nu$ is
a Cauchy sequence and converges to a limit $\zeta_\rho =
(a_\rho,v_\rho)$ with
\begin{equation} \label{limit}
 \Vert \zeta_\rho - \zeta_\nu \Vert < c_0 \rho^{\nu + 1}/(1 - \rho) <
 c_0 \rho^{\nu + 1} .\end{equation}
To prove uniqueness, suppose that $\zeta_\rho' = (a_\rho',v_\rho')$
is another solution with
\begin{equation} \label{close} 
\Vert \zeta_\rho - \zeta'_\rho \Vert < c_0 \rho^{3/2}.
\end{equation} 
Then
$$
0 =  \cF^{0,\rho}_{A,u}(a_\rho,v_\rho) - 
\cF^{0,\rho}_{A,u}(a_\rho',v_\rho ' ) = D_{0}^{\rho,0} (a_\rho - a'_\rho,
v_\rho - v'_\rho) + \eps  $$
for some $\eps$. By the uniform quadratic estimate in Lemma
\ref{quadr} and \eqref{close}
\begin{equation} \label{small}
 \Vert \eps \Vert < c_0 \rho .\end{equation}
Since $ (a_\rho - a'_\rho ,v_\rho - v'_\rho)$ lies in the image of
$Q^{0,\rho}$, we have
\begin{equation} \label{lessthan} 
\Vert a_\rho - a'_\rho, v_\rho - v'_\rho \Vert
\geq c  \end{equation} 
which contradicts \eqref{small} for $c_0$ sufficiently small.
Smooth dependence follows from the implicit function theorem.
\end{proof} 

The approximation theorem \ref{families} extends to the case with
bubbles as follows.  For any collection of gluing parameters
$\ul{\delta} = (\delta_1,\ldots,\delta_m)$ and $v \in \Omega^0(C,{u}^*
TX)$, let
$\exp_{{u}^{\ul{\delta}}}( v^{\ul{\delta}}):
  {C}^{\ul{\delta}} \to X$
the glued zero-area vortex constructed using the implicit function
theorem as in \cite[Section 10]{ms:jh}, so that 
$$ \Def({u}) \to \ol{M}^K({\P^1},X)_0, \quad (v,\ul{\delta})
\mapsto \exp_{{u}^{\ul{\delta}}}( v^{\ul{\delta}}) $$
gives local charts for $\ol{M}^K({\P^1},X)_0$ as in \cite{deform}.  Thus
${C}^{\ul{\delta}}$ is obtained from ${C}$ by removing small balls
around the nodes, and gluing together the components using maps
$\kappa_j^+ = \kappa_j^-/\delta_j$, where $\kappa_j^\pm$ are fixed
local coordinates around the nodes, ${u}^{\ul{\delta}}$ is an
approximate solution constructed using cutoff functions, and
$v^{\ul{\delta}}$ is the correction provided by the implicit
function theorem.  Given a set of markings $\ul{z}$ on ${C}$ in the
complement of the domain of the local coordinates near the nodes we
denote by $\ul{z}^{\ul{\delta}}$ the markings on ${C}^{\ul{\delta}}$.  We
now wish to combine the gluing construction for pseudoholomorphic maps
with the approximation Theorem \ref{families}.  This will give rise to
charts for our moduli space near $\rho = 0$.

\begin{theorem}\label{families2} 
Given a regular zero-area-vortex ${u} := (u_0,\ldots,u_m)$
of combinatorial type $\Gamma$ there exists a neighborhood $U$ of $0$
in $\Def({u})$ and constants $c_0,\rho_0 > 0 $ such that for $\rho <
\rho_0$ and $(a,v,\ul{\delta}) \in U$ there exists a unique
$(a_\rho,v_\rho)$ such that
$$(A_{\on{triv}} + a + a_\rho, {u}_\rho =
\exp_{{u}^{\ul{\delta}}}(v^{\ul{\delta}} + v_\rho))$$
is a $\rho$-vortex in Coulomb gauge with respect to the trivial
connection $A_{\on{triv}}$, $\Vert a_\rho,v_\rho \Vert < c_0
\rho^{3/2}$, and $(a_\rho,v_\rho)$ in the image of the right
inverse $Q^{0,\rho}$ of Lemma \ref{uniform}.  
Furthermore,
$(a_\rho,v_\rho)$ depends stratified-smoothly on
$v,\ul{\delta}$, that is, smoothly on the subset where the
gluing parameters $\delta_i, i \in I$ are non-zero, for each $I
\subset \{1, \ldots, m \}$.
\end{theorem}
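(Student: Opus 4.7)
The plan is to combine the standard gluing construction for stable zero-area vortices with the Newton iteration of Theorem~\ref{families}, making sure all estimates are uniform in the gluing parameters $\ul{\delta}$.

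\textbf{Step 1: Gluing to an honest zero-area vortex.} First I would use the gluing construction of \cite[\S10]{ms:jh} (adapted as in \cite{deform}) to produce, for $(\xi,\ul{\delta})$ in a small neighborhood of $0$ in $\Def({u})$, an approximate map $\exp_{{u}^{\ul{\delta}}}(\xi^{\ul{\delta}}):{C}^{\ul{\delta}} \to X$ that is an honest zero-area vortex on the glued curve. Since ${u}$ is regular in the sense of Theorem~\ref{families}, the linearized operator $\ti{D}^0$ on ${C}^{\ul{\delta}}$ continues to admit a right inverse whose norm is uniformly bounded in $\ul{\delta}$ for $\|\ul{\delta}\|$ small; this is the standard ``uniform right inverse'' estimate for gluing, using weighted Sobolev norms on the necks. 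The moment-map blocks $E_u, E_u^*$ in $\ti{D}^0_u$ are finite-rank and depend continuously on $\ul{\delta}$, so they do not disturb the uniform invertibility.

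\textbf{Step 2: Uniform Newton iteration in $\rho$.} With the glued zero-area vortex in hand, I would apply Theorem~\ref{families} fiberwise over the parameter space. The key point is that the right inverse $Q^{0,\rho}$ constructed in Lemma~\ref{uniform} and the quadratic estimate of Lemma~\ref{quadr} both go through with constants uniform in $\ul{\delta}$: the proof of Lemma~\ref{uniform} only uses the Hodge decomposition on $\Sigma$ (unchanged by gluing in the fibers) and a right inverse to $(\pi_0 L_{\xi_X}\oplus \pi_0 L_{J\xi_X}\oplus D_{A,{u}})$, and the latter inherits uniform bounds from Step 1. Then the Newton iteration $\zeta_{\nu+1}=\zeta_\nu-Q^{0,\rho}\cF^{0,\rho}(\zeta_\nu)$ started from $\zeta_0=0$ converges geometrically to the unique solution $(a_\rho,\xi_\rho)$ in the image of $Q^{0,\rho}$, with $\|a_\rho,\xi_\rho\|<c_0\rho^{3/2}$, by the same induction as in Theorem~\ref{families}.

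\textbf{Step 3: Uniqueness, smoothness, and stratified dependence.} Uniqueness follows exactly as at the end of the proof of Theorem~\ref{families}: any second solution with $\|\xi_\rho-\xi_\rho'\|<c_0\rho^{3/2}$ and in the image of $Q^{0,\rho}$ satisfies an identity that combines with the quadratic estimate to force equality for $c_0$ small. Smooth dependence on $(\xi,a)$ away from the deepest stratum follows from the implicit function theorem with parameters. For stratified smoothness in $\ul{\delta}$ I would invoke the standard fact (as in \cite{deform}) that the gluing map $(\xi,\ul{\delta})\mapsto\exp_{{u}^{\ul{\delta}}}(\xi^{\ul{\delta}})$ is smooth on each stratum where a fixed subset $I\subset\{1,\ldots,m\}$ of the $\delta_i$ are nonzero, and observe that all operators $\cF^{0,\rho}$, $\ti{D}^{0,\rho}$, and $Q^{0,\rho}$ then depend smoothly on $\ul{\delta}$ along that stratum.

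\textbf{Main obstacle.} The technical heart of the argument is establishing the uniform right-inverse bound and uniform quadratic estimate on the glued necks, independent of both $\rho\to 0$ and $\ul{\delta}\to 0$. The usual danger is that norms of right inverses to the linearized Cauchy--Riemann operator degenerate as the necks become long; one must work with exponentially weighted Sobolev norms along the necks and verify that pre- and post-gluing right inverses differ by operators of controlled norm. Once those gluing estimates (which are essentially already in \cite{ms:jh,deform}) are in place, the additional gauge-theoretic components introduced in~\eqref{inftyD} are harmless because they are finite-rank and live on the fixed principal component $\Sigma$.
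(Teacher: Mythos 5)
Your proposal is correct and follows essentially the same route as the paper: glue to obtain the approximate (in fact honest) zero-area vortex $\exp_{{u}^{\ul{\delta}}}({\xi}^{\ul{\delta}})$, verify that the error term, the right inverse $Q^{0,\rho}_0$, and the quadratic estimate are all uniformly bounded in $\ul{\delta}$ and $\rho$ (citing the gluing estimates of \cite[Chapter 10]{ms:jh} for the uniform inverse of the Cauchy--Riemann part), and then run the Newton iteration and uniqueness argument of Theorem \ref{families} verbatim, with stratified smoothness coming from smoothness of the gluing map on each stratum. Your identification of the uniform right-inverse bound on the necks as the technical heart, and the observation that the gauge-theoretic blocks are finite-rank and harmless, matches the paper's reasoning.
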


\begin{proof}  
Let $\exp_{{u}^{\ul{\delta}}}(v^{\ul{\delta}})$ denote the
zero-area vortex given by gluing.  We wish to solve
$ \cF_{A,\exp_{{u}^{\ul{\delta}}}(v^{\ul{\delta}})}^{0,\rho}(a +
a_\rho,v + v_\rho) = 0 $
for $a_\rho,v_\rho$, using Newton iteration. The error term
$\rho \exp_{{u}^{\ul{\delta}}}(v^{\ul{\delta}})^*
P(\Phi) $ is uniformly bounded in $\ul{\delta}$ and $\rho$, by
the bound on $\Phi$.  The norms of the operators $Q^{0,\rho}$
and $\ti{D}^{0,\rho}$ at
$\exp_{{u}^{\ul{\delta}}}(v^{\ul{\delta}})$ are uniformly
bounded in $\ul{\delta}$ and $\rho$, that is, 
$ \Vert Q^{0,\rho} \Vert < c $ and $\Vert \ti{D}^{0,\rho} \Vert < c $
for some $\rho$-independent constant $c$.  Indeed, the inverse to
$D_{\exp_{{u}^{\ul{\delta}}}(v^{\ul{\delta}})}$ is uniformly bounded
in $\ul{\delta}$, by the gluing argument for pseudoholomorphic maps in
\cite[Chapter 10]{ms:jh}.  The claim now follows from Lemma
\ref{uniform}.  The map $\cF^{0,\rho}_{A,{u},\ul{\delta}}$ satisfies a
quadratic estimate uniformly in $\ul{\delta}$ and uniformly in $\rho$
except for a term quadratic in $a$ which has norm linear in $\rho$,
just as in Lemma \ref{quadr}.  The same argument as in the case of
smooth domain in Theorem \ref{families} gives a solution
$a_\rho,v_\rho$ by Newton iteration.  Smoothness on each stratum
follows from smoothness of $v^{\ul{\delta}}$ on $v,\ul{\delta}$ and
the smoothness statement of Theorem \ref{families}.
\end{proof} 

\subsection{Surjectivity} 

Let ${u}$ be a stable zero-area vortex.  Let $\Def({u})_\eps$ denote a
$\eps$-ball around $0$ in the space $\Def({u})$ of infinitesimal
deformations of \eqref{defs}.  Theorem \ref{families} defines for
$\eps$ sufficiently small a map
\begin{equation} \label{Tu} T_{{u}}^\rho: \Def({u})_\eps \to
\ol{M}^K({\P^1},X)_\rho .\end{equation} 
The collection of images of the maps $T_{u}^\rho$ covers
$\ol{M}^K({\P^1},X)_\rho$ for $\rho$ sufficiently small:

\begin{theorem}   \label{surject} 
Suppose that every zero-area nodal vortex is regular.  For any
constant $c > 0$ and $d \in H_2^G(X,\Z)$, there exists $\rho_0 > 0 $
such that if $(A,{u})$ is a $\rho$-vortex with $\rho < \rho_0$ and
homology class $d$ then $(A,{u})$ is in the image of $T_{u'}^\rho$ for
some nodal zero-area vortex $u'$.
\end{theorem}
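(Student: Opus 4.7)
The plan is to argue by contradiction using the compactness theorem together with the uniqueness clause of the gluing construction in Theorem \ref{families2}. Suppose the conclusion fails: there exist constants $c > 0$ and $d \in H_2^G(X,\Z)$, a sequence $\rho_\nu \to 0$, and $\rho_\nu$-vortices $(A_\nu,u_\nu)$ of homology class $d$ with $\lan d,[\omega_K]\ran < c$, such that no $(A_\nu,u_\nu)$ lies in the image of any $T^{\rho_\nu}_{u'}$ for a polystable zero-area vortex $u'$. By Theorem \ref{compact}, after passing to a subsequence and applying gauge transformations $k_\nu \in \K(P)$, the connections $k_\nu A_\nu$ converge to the trivial connection $A_{\on{triv}}$ in $C^0$, and $k_\nu u_\nu$ Gromov-converges to a polystable zero-area vortex ${u}_0$ of combinatorial type $\Gamma$. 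By hypothesis $u_0$ is regular and stable, so the chart $T^{\rho_\nu}_{{u}_0}$ of Theorem \ref{families2} is defined on some uniform neighborhood $\Def({u}_0)_\eps$.

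Next I would identify $(k_\nu A_\nu,k_\nu u_\nu)$ with a point of the chart. Gromov convergence gives, for each sufficiently large $\nu$, gluing parameters $\ul{\delta}_\nu \to 0$ and a diffeomorphism between the domain of $u_\nu$ and ${C}_0^{\ul{\delta}_\nu}$. Pulling back along this diffeomorphism and using smooth convergence on the complement of the bubbling points, I would write
\[
k_\nu u_\nu = \exp_{{u}_0^{\ul{\delta}_\nu}}\!\bigl(\xi^{\ul{\delta}_\nu}_{0,\nu} + \eta_\nu\bigr),
\]
where $\xi_{0,\nu} \in \Def_\Gamma({u}_0)$ is the component of the deformation tangent to the stratum (made uniquely defined by projecting onto $\Def_\Gamma({u}_0)$) and $\eta_\nu$ is the remaining perturbation. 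A further small gauge transformation brings $k_\nu A_\nu = A_{\on{triv}} + a_\nu$ into Coulomb gauge relative to $A_{\on{triv}}$ and simultaneously enforces the moment-map slice condition $\pi_0 L_{J\eta_\nu} \Phi = 0$, placing $(a_\nu,\eta_\nu)$ in the image of the right inverse $Q^{0,\rho_\nu}_0$ of Lemma \ref{uniform}.

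The main obstacle is verifying the quantitative bound $\Vert (a_\nu,\eta_\nu)\Vert < c_0 \rho_\nu^{3/2}$ needed to invoke the uniqueness statement of Theorem \ref{families2}. The vortex equation $F_{A_\nu} = -\rho_\nu \Vol_\Sigma u_\nu^* P(\Phi)$ together with Lemma \ref{genzero} applied in Coulomb gauge gives the first bound $\Vert a_\nu\Vert_{1,p} \le C \rho_\nu$. Integrating the vortex equation as in the proof of Theorem \ref{compact} controls $\int_\Sigma u_\nu^* \Phi\,\Vol_\Sigma$ at order $\rho_\nu$. To upgrade these to the $\rho_\nu^{3/2}$ rate needed, I would apply $\cF^{0,\rho_\nu}_{{u}_0,\xi_{0,\nu},\ul{\delta}_\nu}$ (as in \eqref{cFr}) to the candidate $(a_\nu,\eta_\nu)$: since $(k_\nu A_\nu,k_\nu u_\nu)$ is an honest $\rho_\nu$-vortex, this composition vanishes, so bootstrapping through $Q^{0,\rho_\nu}_0$ together with the quadratic estimate of Lemma \ref{quadr} yields exactly the improved bound. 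This is the same bootstrap that produces the $c_0 \rho^{3/2}$ estimate in the proof of Theorem \ref{families2}, run backwards starting from the solution rather than from the initial seed.

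Once $(a_\nu,\eta_\nu)$ is shown to be of size $o(\rho_\nu^{3/2})$ and to lie in $\Im Q^{0,\rho_\nu}_0$, the uniqueness clause of Theorem \ref{families2} applied to the parameters $(\xi_{0,\nu},\ul{\delta}_\nu) \in \Def({u}_0)_\eps$ forces $(a_\nu,\eta_\nu)$ to coincide with the Newton-iteration solution $(a_{\rho_\nu},\xi_{\rho_\nu})$. Thus $(k_\nu A_\nu,k_\nu u_\nu) = T^{\rho_\nu}_{{u}_0}(\xi_{0,\nu},\ul{\delta}_\nu)$ for all $\nu$ large, and undoing $k_\nu$ places $(A_\nu,u_\nu)$ in the image of $T^{\rho_\nu}_{u_0}$, contradicting our standing assumption. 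Choosing $\rho_0$ to be the infimum of the $\rho_\nu$ for which the argument succeeds then completes the proof.
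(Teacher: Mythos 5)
Your setup --- arguing by contradiction, invoking Theorem \ref{compact} to extract a Gromov limit $u_0$, and decomposing the vortex into a $\Def(u_0)$-component plus a remainder in $\Im Q^{0,\rho_\nu}_0$ --- matches the paper's. But the quantitative heart of your argument, the claim that the remainder $(a_\nu,\eta_\nu)$ measured from the zero-area vortex $\exp_{{u}_0^{\ul{\delta}_\nu}}(\xi_{0,\nu}^{\ul{\delta}_\nu})$ can be ``upgraded'' to size $c_0\rho_\nu^{3/2}$ by bootstrapping through $Q^{0,\rho_\nu}_0$, does not work, and this is a genuine gap. The discrepancy between a $\rho$-vortex and \emph{any} zero-area vortex is genuinely of order $\rho$, not $o(\rho^{3/2})$: the vortex equation forces $F_{a_\nu} = -\rho_\nu \Vol_\Sigma\, u_\nu^* P(\Phi)$, so in Coulomb gauge $\Vert a_\nu\Vert_{1,p}$ is comparable to $\rho_\nu \Vert u_\nu^*\Phi\Vert_{0,p}$ from both sides and in particular is bounded \emph{below} by a multiple of $\rho_\nu$ unless $\Phi$ vanishes identically along $u_\nu$. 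Likewise the fixed-point identity $\zeta = -Q^{0,\rho_\nu}_0\bigl(\cF^{0,\rho_\nu}(\xi_{0,\nu}) + N(\zeta)\bigr)$ can only return $\Vert\zeta\Vert = O(\rho_\nu)$, because the source term $\cF^{0,\rho_\nu}(\xi_{0,\nu})$ is itself of order $\rho_\nu$. Since $\rho^{3/2} < \rho$ for small $\rho$, no bootstrap can improve a quantity that is bounded below by $c\rho_\nu$ to $O(\rho_\nu^{3/2})$, so the uniqueness clause of Theorem \ref{families2} cannot be applied to your $(a_\nu,\eta_\nu)$.

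The paper circumvents this precisely by not comparing the vortex to a zero-area vortex directly. It first \emph{reverses the initial Newton step}: it subtracts $(a_1,\xi_1) = Q^0(\pi_1\rho\, u_\rho^*\Phi,0,0,0,0)$ from the vortex data, shows that what remains is an $O(\rho^2)$-approximate zero of the zero-area map $\cF^{0,0}$, and applies the implicit function theorem for $\cF^{0,0}$ to produce an exact zero-area vortex $u' = \exp_{u_0}(\xi)$ at distance $O(\rho^2)$. Running the forward chart $T^{\rho}_{u_0}$ at this $\xi$, the first Newton iterate is again of the form $Q^0(\pi_1\rho\, u_1^*\Phi,0,0,0,0)$ and agrees with $(a_1,\xi_1)$ to order $\rho^2$; the two order-$\rho$ leading terms therefore cancel, leaving a total discrepancy $c_5\rho^2 < c_0\rho^{3/2}$, which is then adjusted into $\Im Q^{0,\rho}$ using the transversality of $\ker\ti{D}$ and $\Im Q^{0,\rho}$ together with the inverse function theorem. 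Only at that point does uniqueness apply. This cancellation mechanism --- an auxiliary IFT for $\cF^{0,0}$ producing an adjusted base point $u'$ (which need not be the Gromov limit itself) --- is the idea missing from your argument. A secondary issue: Coulomb gauge plus the slice condition does not by itself place $(a_\nu,\eta_\nu)$ in $\Im Q^{0,\rho_\nu}_0$; that membership also comes from the transversality argument just mentioned.
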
  

\begin{proof}  
Suppose that the assertion in the statement does not hold, that is,
for every $\rho$ there exists a $\rho$-vortex $(A_\rho,{u}_\rho)$ of
class $d$ not in the image of any map $T_{u'}^\rho$. By the
compactness Theorem \ref{limthm}, after passing to a subsequence and
gauge transformations $(A_\rho,{u}_\rho)$ converges to a
zero-area-vortex ${u}_0$.  First consider the case that $u_0$ has
smooth domain $C \cong {\P^1}$, hence $u_\rho$ has smooth domain as
well.  We write
$$ A_\rho = a_\rho, \quad u_\rho = \exp_{u_0}(v_\rho) $$
for some $a_\rho, v_\rho$.  We may assume that $(A_\rho,u_\rho)$ is
in Coulomb gauge, that is,
\begin{equation} \label{Coulomb}
 \d^* a_\rho = 0 , \quad  E_{u_0}^* v_\rho = 0 .\end{equation} 
Define 
\begin{equation} \label{firstQ} (a_1,v_1) =  Q^{0,0}( \pi_1 \rho * u_\rho^* \Phi, 0,0,0,0 ) .\end{equation}
We have an estimate $\Vert a_1,v_1 \Vert < c_1 \rho$ where $c_1$
depends on the bound on $\Phi$ and the norm of $Q^{0,0}$.  Consider the
map
\begin{multline} \cF^{0,0}: \Omega^0({\P^1},\k)_{1,p} \oplus \Omega^{0}({C},(u_0)^*T
X)_{1,p} \to \Im (\d \oplus \d^*)_{0,p} \oplus (\k \oplus \k) \oplus
\Omega^{0,1}(C,(u_0)^*T X)_{0,p} \\
 ( a,v) \mapsto \left( \pi_1 F_a , \pi_1 d^* v , \int_{\P^1} *
\exp_{u_0}(v)^* \Phi , E_{u_0}^* v , \Psi_{u_0}(v)^{-1} \olp
\exp_{u_0}(v) \right) .\end{multline}
Using \eqref{Coulomb}, \eqref{firstQ}, we have
\begin{eqnarray*}
\Vert \cF^{0,0}(a_\rho -  a_1,v_\rho  - v_1) 
\Vert &=& 
\left\Vert
\left[ \begin{array}{c} \pi_1 F_{a_\rho - a_1} \\ 
\pi_1 d^* (a_\rho - a_1) \\
 \int_{\P^1} * \exp_{u_0}(v_\rho - v_1)^*\Phi  \\
 E_{u_0}^* v_\rho \\  
\Psi_{u_0}(v)^{-1} \olp \exp_{u_0}(v) - \\ \Psi_{u_0}(v -
v_\rho)^{-1} \olp \exp_{u_0}(v - v_\rho)  
\end{array} \right] \right\Vert
\\
&=& \left\Vert \left[ \begin{array}{c} 
\pi_1 (F_{a_\rho } - \d_A a_1 + 
\hh[ a_1,a_1]) \\  0 \\ 
\int_{\P^1} * \exp_{u_0}(v_\rho - v_1
)^* \Phi \\  0 \\ 
\Psi_{u}(v_\rho - v_1)^{-1} \olp \exp_{u_0}(v_\rho - v_1) - \\
\Psi_{u_0}(v_\rho)^{-1} \olp \exp_{u_0}(v_\rho) 
\end{array} \right] \right\Vert  \\
&\leq c_2& \rho^2  \end{eqnarray*} 
where $c_2$ depends on $c_1$ and the constants in the Sobolev
multiplication theorems.  By the implicit function theorem for the map
$\cF^{0,0}$, there exists a zero-area vortex $ {u}' =
\exp_{{u}_0}(v)$ within $c_3 \rho^2$ of $(A_\rho - a_1,
\exp_{u}( v_\rho - v_1)) $ in the sense that
$$ \Vert A_\rho - a_1 \Vert^2 + \Vert v - v_\rho + v_1 \Vert^2 <
c_3^2 \rho^4 .$$
Now consider the $\rho$-vortex $ (A'_\rho,u'_\rho) :=
T_{\rho,{u}_0}(v) =: (a'_\rho,\exp_{u'}(v'_\rho)) .$ As in the
proof of Theorem \ref{families2}, this vortex lies within $c_4 \rho^2$
of the first step in the Newton iteration, $ (A'_{\rho,1},u'_{\rho,1})
= (a_{\rho,1}, \exp_{u'}(v_{\rho,1})) $ where
\begin{equation} \label{secondQ} (a_{\rho,1},v_{\rho,1}) = Q^{0,0}( \pi_1 * \rho 
(u')^* \Phi, 0, 0, 0, 0  ) .\end{equation}
Now $(a_\rho,v_\rho)$ lies within $c_1 \rho^2$ of
$(a_{\rho,1},v_{\rho,1})$ by \eqref{firstQ} and hence within $c_5
\rho^2$ of $(a_\rho',v_\rho')$.  For $\rho$ sufficiently small,
$c_5 \rho^2 < c_0 \rho^{3/2}$.

Next we show that we may assume that $(a'_\rho,v'_\rho)$ lies in the
image of the right inverse $Q^{0,\rho}$ for the linearized operator
$\ti{D}^{0,\rho}$ of $u'$.  Given $(a,v)$ as the previous
paragraph, we claim that
\begin{equation} \label{form}
(a,v) = (a_0,v_0) + (a_1,v_1), \quad (a_0,v_0) \in \ker
  \ti{D}^{0,\rho}, \quad (a_1,v_1) \in \Im
  Q^{0,\rho} \end{equation} 
with norm $ \Vert (a_1,v_1) \Vert \leq c \Vert (a_1,v_1) \Vert.$
For any $c > 0$ there exists $\rho_0$ such that for $\rho < \rho_0$,
$ \Vert \ti{D}^{0,\rho} ( a_0,v_0)
\Vert \leq c \Vert (a_0,v_0) \Vert $
for any $(a_0,v_0) \in \ker \ti{D}^{0,0}$.  Thus the space
$\ker \ti{D}^{0,0}$ is transverse to the image of $Q^{0,\rho}$,
for $\rho$ sufficiently small: it meets $\Im Q^{0,\rho}$ trivially and
projects isomorphically onto $\ker \ti{D}^{0,\rho}$.  By the inverse
function theorem, any nearby pair $(a,v)$ is of the form
\eqref{form}. By the uniqueness statement in Theorem \ref{families2},
$(A_\rho,u_\rho)$ is in the image of the map $T_{u'}^{\rho}$.

The case with bubbles is similar, using a map obtained by combining
$\cF^{0,0}$ with the usual Cauchy-Riemann equation on the bubbles.  By
iteration one constructs a zero-area vortex $u'$ and a pair
$(a',v')$ such that $T_{u'}^\rho(a',v')$ is within $c_0
\rho^{3/2}$ of $(A,{u})$.  Surjectivity follows from the uniqueness
part of the implicit function theorem used in the gluing
construction.  \end{proof}

\subsection{Construction of charts}

The charts near zero area are given by the approximation construction
of the previous section.  Let $u$ be a zero-area vortex.  Using
\eqref{Tu} we have for $\rho$ sufficiently small a map
$$ T_{u} : \Def(u)_\meps \times [0,\rho] \to
\ol{M}^K({\P^1},X)_{[0,\rho]} .$$

\begin{proposition} \label{homeo}  For $\meps,\rho$ sufficiently small
$T_{u}$ is a homeomorphism onto an open neighborhood of $[u]$.
\end{proposition}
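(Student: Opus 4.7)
The plan is to verify three things: (i) $T_{u',\ul{H}}$ is continuous; (ii) it is locally injective on $\Def(u',\ul{H})_\meps$; and (iii) its image is an open neighborhood of $u'$ in $\ol{M}_n(\Sigma,X)_{[\rho,0]}$. Once these are in place, continuity of the inverse follows by invariance of domain on each stratum, combined with a compatibility argument across strata.

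Continuity of $T_{u',\ul{H}}$ on each stratum of constant combinatorial type is built into the construction: on the open locus where all gluing parameters $\delta_i$ are nonzero, the stratified-smooth dependence of $(a_\rho,{\xi}_\rho)$ on $({\xi},\ul{\delta})$ from Theorem \ref{families3} gives smoothness; continuity across strata (as some $\delta_i \to 0$) follows from the standard comparison of the gluing parametrization with the Gromov topology, as in the proof of Theorem \ref{compact2}.

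For injectivity, suppose $T_{u',\ul{H}}({\xi}_a,\ul{\delta}_a)=T_{u',\ul{H}}({\xi}_b,\ul{\delta}_b)$. The combinatorial type of the common image $(A,u)$ is determined by the zero set of $\ul{\delta}$, so both parameters lie in the same stratum. On this stratum, Lemma \ref{linearH} says the derivative of $T_{u',\ul{H}}$ at $0$ is an injection; the domain and codomain orbifolds have equal dimension by the index computation, so $T_{u',\ul{H}}$ is a smooth immersion between smooth orbifolds of the same dimension, hence a local diffeomorphism. Combining this with the uniqueness of the Coulomb-gauge correction $(a_\rho,{\xi}_\rho) \in \Im Q^{0,\rho}$ from Theorem \ref{families3} upgrades local smooth injectivity to injectivity on a full neighborhood of $0$, after shrinking $\meps$ if needed.

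Openness of the image is exactly the content of Theorem \ref{surject2}: any $\rho$-vortex near $u'$ with markings on $\ul{H}$ lies in the image of $T_{u'',\ul{H}}$ for some zero-area vortex $u''$; the uniqueness part of Theorem \ref{families3}, applied after accounting for the finite automorphism group of $u'$, forces $u''$ to agree with $u'$ up to the $\Aut(u')$-action parametrized by $\ker(\ti{D}_{u'})/\Im(\aut(C)) \subset \Def(u',\ul{H})$. Continuity of the inverse then reduces, on each stratum, to invariance of domain on orbifold charts; across strata one argues that if $T_{u',\ul{H}}({\xi}_n,\ul{\delta}_n)$ Gromov-converges to $T_{u',\ul{H}}({\xi}_0,\ul{\delta}_0)$, then compactness of $\ol{\Def(u',\ul{H})}_\meps$ yields a subsequential limit $({\xi}',\ul{\delta}')$ that must agree with $({\xi}_0,\ul{\delta}_0)$ by the injectivity just established. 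I expect the main technical nuisance to be precisely this last coordination between Gromov convergence of the image vortices and convergence of the gluing parameters at the boundary between strata; everything else is a formal consequence of Theorems \ref{families3}, \ref{surject2}, and Lemma \ref{linearH}.
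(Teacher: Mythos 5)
There is a genuine gap in your injectivity step, and it is precisely the point that the rigidification by hypersurfaces is designed to handle. You correctly reduce to two parameters $({\xi}_a,\ul{\delta}_a)$ and $({\xi}_b,\ul{\delta}_b)$ lying in the same stratum, but you never show $\ul{\delta}_a = \ul{\delta}_b$: the combinatorial type only pins down the \emph{zero set} of $\ul{\delta}$, not its value. Lemma \ref{linearH} is a statement about the restriction $T_{u'}^{\ul{H},\ul{\delta}}$ with the gluing parameters \emph{fixed}, so it gives injectivity of the derivative only in the ${\xi}$-direction and cannot by itself yield the ``local diffeomorphism'' you invoke; moreover $T_{u',\ul{H}}$ is only stratified-smooth in $\ul{\delta}$, so an inverse-function-theorem argument at the boundary stratum is not available, and even on the open stratum a local diffeomorphism would only give local, not global, injectivity on the chart. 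The paper's argument supplies the missing ingredient directly from the minimality of $\ul{H}$: since $({C},\ul{z}^+)$ is stable with exactly three special points on each component, a nearby stable curve determines a \emph{unique} set of gluing parameters, so $\ul{\delta}$ is recovered (continuously) from the image vortex ${u}$ alone; only then is Lemma \ref{linearH} used to recover ${\xi}$. Your proposal never uses the hypersurfaces for this purpose, which is why the $\ul{\delta}$-direction is left unaccounted for. (The same mechanism appears in the proof of Theorem \ref{cobord}, where an infinitesimal change in $\ul{\delta}$ is detected by the values at the extra markings $z_j^+$ while a change in ${\xi}$ tangent to the $H_j$ is not.)

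Two smaller points. First, your appeal to Theorem \ref{surject2} for openness of the image produces a vortex in the image of $T_{u'',\ul{H}}$ for \emph{some} zero-area vortex $u''$, and the patch identifying $u''$ with $u'$ ``up to $\Aut(u')$'' is not justified by the uniqueness clause of Theorem \ref{families3}, which concerns only the Coulomb-gauge correction for a fixed $({\xi},\ul{\delta})$; the paper instead gets openness of the image from Theorem \ref{families3} together with the immersion property of Lemma \ref{linearH}. Second, your compactness-plus-injectivity argument for continuity of the inverse across strata is a workable alternative to the paper's route (which again uses continuous dependence of $\ul{\delta}$ on the curve), but it depends on the injectivity you have not yet fully established.
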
 

\begin{proof}[Sketch of proof] 
By Theorem \ref{surject}, the image of $T_{u}$ contains an open
neighborhood of $u$.  We choose a complement of $\aut(C)$ in $\ker
\ti{D}_{u}$ and restrict $T_{u}$ to $v$ lying in this complement.
Suppose that $(A,u') = T_{u}(v,\ul{\delta},\rho) $.  We claim that
the derivative of $T_{u}$ at $(v,\ul{\delta},\rho)$ is an injection
for $(v,\ul{\delta},\rho)$ sufficiently small.  Indeed, the map
$T_{u}$ is the composition of the gluing construction for
pseudoholomorphic maps and the approximation construction for
$\rho$-vortices given above.  That the gluing construction has
injective derivative is proved in \cite{deform}, using the exponential
gluing profile so that the evaluation maps at the additional markings
are differentiable.  It then suffices to consider the case without
bubbles, for which we have
$$ 0 = \ddt |_{t = 0} \cF^{\rho} (T_{u}^\rho(v + t v')) =
\ti{D}_{A,u'} (D_{v} T^\rho_{u}(v')) \\ = \ti{D}_{A,u'} (v' +
   v'' )
$$
for some $v''$ in the image of the right inverse $Q^{0,\rho}$.  By
Lemma \ref{quadr}, the image of the right inverse has trivial
intersection with $\ker(\ti{D}_{{u}})$, which shows that $T_{u}$ is
an injection.  To show that $T_{u}$ is a homeomorphism, it remains to
show that $T_{u}$ is open, that is, if $[A_\nu,{u}_\nu,\rho_\nu] \to
[A,u',\rho]$ with $[A_\nu,{u}_\nu,\rho_\nu],[A,u',\rho]$ in the
image then $v_\nu \to v$ and $\ul{\delta}_\nu \to \ul{\delta}$.  The
injectivity argument in \cite{deform} shows that $\ul{\delta}_\nu \to
\ul{\delta}$.  By Theorem \ref{families2},
$T_{u}(v_\nu,\ul{\delta},\rho) \to T_{u}(v,\ul{\delta},\rho)$
implies $v_\nu \to v$.
\end{proof}  

\begin{proof}[Proof of Theorem \ref{cobord}]
That the moduli spaces $\ol{M}_n^K({\P^1},X)_{(0,\rho]}$ are
  stratified-smooth orbifolds for $\rho$ sufficiently small is
  \cite[Theorem 6.2.3]{cross}, using the fact that the $U(1)$-action
  on the moduli space of polarized vortices is free for $\rho$
  sufficiently small.  Charts for $\ol{M}_n^K({\P^1},X)_{[0,\rho]}$
  near zero-area vortices are given by the maps $T_{u}$.  By
  Proposition \ref{homeo}, $T_{u}$ is a homeomorphism onto an open
  neighborhood of $u$, and so defines a chart.  Independence of the
  choice of complement of $\aut(C)$ (given, for example, by choosing
  auxiliary hypersurfaces defining markings to make the domain curve
  stable) is similar to the case of pseudoholomorphic maps and left to
  the reader.  An orientation for the moduli space is constructed from
  orientations on the determinant lines of the linearized operators
  $\ti{D}_{A,u}$, induced from the deformation to the sum of
  $\d_A \oplus \d_A^*$ and a sum of linearized Cauchy-Riemann operators.  
\end{proof}

\subsection{Equality of invariants} 
Suppose that every zero-area vortex is regular.  In this
case, every element of $\ol{M}_n^{K,\fr}({\P^1},X,d)_{[0,\rho]}$ is stable
and regular for sufficiently small $\rho$, and so the projection
$ \ol{M}_n^{K,\fr}({\P^1},X,d)_{[0,\rho]} \to
\ol{M}_n^K({\P^1},X,d)_{[0,\rho]} $
has the structure of an orbifold principal $K^n$-bundle.  The
evaluation maps extend over the cobordism and the boundary
relation 
$ [\ol{M}_n^{K,}({\P^1},X,d)_{0}] =
[\ol{M}_n^{K}({\P^1},X,d)_{\rho}]$ in $
H(\ol{M}_n^{K}({\P^1},X,d)_{[0,\rho]},\Q) $
gives an equality of $\rho$ and $0$-vortex invariants.

\def\cprime{$'$} \def\cprime{$'$} \def\cprime{$'$} \def\cprime{$'$}
  \def\cprime{$'$} \def\cprime{$'$}
  \def\polhk#1{\setbox0=\hbox{#1}{\ooalign{\hidewidth
  \lower1.5ex\hbox{`}\hidewidth\crcr\unhbox0}}} \def\cprime{$'$}
  \def\cprime{$'$}

\end{document}